\tikzstyle{vertex}=[inner sep=0pt]
\theoremstyle{plain}
\newtheorem{thm}{Theorem}[section]
\newtheorem*{thm*}{Theorem}
\newtheorem{prop}[thm]{Proposition}
\newtheorem{lem}[thm]{Lemma}
\theoremstyle{definition}
\newtheorem{defn}[thm]{Definition}
\newtheorem{rmk}[thm]{Remark}
\theoremstyle{remark}
\newtheorem{example}[thm]{Example}
\numberwithin{equation}{section}
\def\on{\operatorname}
\def\op{{\on{op}}}
\def\Hom{{\on{Hom}}}
\def\Span{{\on{Span}}}
\def\Rel{{\on{Rel}}}
\def\Cat{{\on{Cat}}}
\def\Set{{\on{Set}}}
\def\Vect{{\on{Vect}}}
\def\Cut{{\on{Cut}}}
\def\scr{\EuScript}
\def\C{{\scr{C}}}
\def\Fin{{\scr{F}\!\on{in}}}
\newcommand{\fin}[1]{{\langle #1\rangle}}
\newcommand{\id}{\mathrm{id}}
\renewcommand{\emptyset}{\varnothing}
\newcommand{\suchthat}{\colon}
\newcommand{\bitimes}[2]{\,\vphantom{\times}_{#1}\!\! \times_{#2}\!}
\newcommand{\pt}{\{\bullet\}}
\DeclareMathOperator{\Aut}{Aut}
\renewcommand{\hat}[1]{\widehat{#1}}
\newcommand{\out}{\mathrm{out}}
				\string\usetikzlibrary{decorations.markings} to use arrows with markings}{}}{}%
\newcommand{\stringdiagram}[1]{\[\begin{tikzpicture}[scale=.33, thick]
#1\end{tikzpicture}\]}
\newcommand{\stringdiagramlabel}[2]{\begin{equation}\label{#2}\begin{tikzpicture}[scale=.33, thick, baseline=(current  bounding  box.center)]
#1\end{tikzpicture}\end{equation}}
\newcommand{\identity}[2]{\draw (#1,#2+1) -- (#1,#2-1);}
\newcommand{\unit}[2]{\draw (#1,#2) circle [radius=.1]; \draw (#1,#2-.1) -- (#1,#2-1);}
\newcommand{\counit}[2]{\draw (#1,#2) circle [radius=.1]; \draw (#1,#2+.1) -- (#1,#2+1);}
\newcommand{\morphism}[3]{\draw (#1-.4, #2-.6) rectangle (#1+.4,#2+.6); \draw (#1,#2-.6) -- (#1,#2-1); \draw (#1,#2+.6) -- (#1, #2+1); \node at (#1,#2) {$#3$};}
\newcommand{\multiplication}[2] {\draw (#1,#2-.3) -- (#1-1,#2+1); \draw (#1,#2-.3) -- (#1+1,#2+1); \draw (#1,#2-.3) -- (#1,#2-1);}
\newcommand{\slimmultiplication}[2] {\draw (#1,#2-.3) -- (#1-.5,#2+1); \draw (#1,#2-.3) -- (#1+.5,#2+1); \draw (#1,#2-.3) -- (#1,#2-1);}
\newcommand{\tripleprod}[2] {\draw (#1,#2) -- (#1-1,#2+1); \draw (#1,#2) -- (#1+1,#2+1); \draw (#1,#2+1) -- (#1,#2-1);}
\newcommand{\swaptwoc}[2]{\draw (#1-1,#2+1) -- (#1+1,#2-1); \draw (#1-1,#2-1) -- (#1,#2+1); \draw (#1,#2-1) -- (#1+1,#2+1); }
\newcommand{\copairing}[2]{\draw (#1+1,#2-1) arc(0:180:1);}
\newcommand{\pairing}[2]{\draw (#1-1,#2+1) arc(180:360:1);}
\newcommand{\swap}[2]{\draw (#1-1,#2+1) -- (#1+1,#2-1); \draw (#1-1,#2-1) -- (#1+1,#2+1);}
\newcommand{\slimswap}[2]{\draw (#1-.5,#2+1) -- (#1+.5,#2-1); \draw (#1-.5,#2-1) -- (#1+.5,#2+1);}
\newcommand{\swaptwoa}[2]{\draw (#1-1.5,#2+1) -- (#1+1.5,#2-1); \draw (#1-1.5,#2-1) -- (#1,#2+1); \draw (#1+.5,#2-1) -- (#1+1.5,#2+1); }
\newcommand{\swaptwob}[2]{\draw (#1-.5,#2+1) -- (#1+2.5,#2-1); \draw (#1-.5,#2-1) -- (#1+1,#2+1); \draw (#1+.5,#2-1) -- (#1+2,#2+1); }
\newcommand{\equals}[2]{\node at (#1,#2) {$=$};}
\newcommand{\nattrans}[3]{\node at (#1,#2) {$\xRightarrow{#3}$};}
\newcommand{\leftnattrans}[3]{\node at (#1,#2) {$\xLeftarrow{#3}$};}
\newcommand{\upnattrans}[3]{\node at (#1,#2) {$\big\Uparrow #3$};}
\newcommand{\highlight}[4]{\draw[teal, dotted] (#1,#2) rectangle (#3,#4);}
\newcommand{\enclose}[4]{\draw[brown, thin] (#1,#2) rectangle (#3,#4);}
\title{Frobenius and commutative pseudomonoids in the bicategory of spans}
\author{Ivan Contreras\thanks{Department of Mathematics, Amherst College, 31 Quadrangle Drive, Amherst,
		MA 01002 \\ Email: icontreraspalacios@amherst.edu} \and Rajan Amit Mehta\thanks{Department of Mathematical Sciences, Smith College, 44 College Lane,
		Northampton, MA 01063 \\ Email: rmehta@smith.edu} \and Walker H. Stern\thanks{Department of Mathematics, Bilkent University, Ankara, Turkey \\ Email: walker@walkerstern.com}}
\begin{document}
	\maketitle 
	
	\begin{abstract}
 \noindent
	In previous work by the first two authors, Frobenius and commutative algebra objects in the category of spans of sets were characterized in terms of simplicial sets satisfying certain properties. In this paper, we find a similar characterization for the analogous coherent structures in the bicategory of spans of sets. We show that commutative and Frobenius pseudomonoids in $\Span$ correspond, respectively, to paracyclic sets and $\Gamma$-sets satisfying the $2$-Segal conditions. These results connect closely with work of the third author on $A_\infty$ algebras in $\infty$-categories of spans, as well as the growing body of work on higher Segal objects.
        Because our motivation comes from symplectic geometry and topological field theory, we emphasize the direct and computational nature of the classifications and their proofs.  
	\end{abstract}
	\tableofcontents
	
\section*{Introduction}
\addcontentsline{toc}{section}{Introduction}

The present paper is part of a program aimed at understanding Frobenius algebras and topological field theories valued in categories of Lagrangian correspondences. There are two main constructions of such Lagrangian correspondence categories: the Wehrheim-Woodward construction of \cite{WW} and \cite{LBW}, which is more $1$- and $2$-categorical in flavor, and the derived category of Lagrangian correspondences (see, e.g., \cite{CALAQUE2014926} and \cite{haugseng_iterated_2018}), which is fundamentally $\infty$-categorical. Because (higher) categories of spans play a significant role in both of these constructions, such span categories provide a common framework that can shed light on both approaches.

In this paper, we consider the bicategory of spans of sets, which we simply denote as $\Span$. In this setting, one can define a pseudomonoid, which is a coherent version of a monoid or algebra. As a special case of a result of one of the authors \cite{Stern:span}, pseudomonoids in $\Span$ are in correspondence with simplicial sets satisfying the \emph{$2$-Segal conditions} \cite{Dyckerhoff-Kapranov:Higher, GKT1}. Our main results are that a Frobenius structure on a pseudomonoid in $\Span$ corresponds to a paracyclic structure on the corresponding simplicial set, and that a commutative structure on a pseudomonoid in $\Span$ corresponds to a $\Gamma$-structure on the corresponding simplicial set.

These results could be generalized in various ways. However, since the eventual aim of this project is to draw together threads from classical mechanics, symplectic geometry, topological field theory, and higher category theory, we hope that it will be of interest to a broad mathematical audience, whose members may not be versed in all of the techniques of these disparate fields. Working in the context of spans of sets allows us to describe the various concepts and prove the results in an accessible  setting. In particular, a large part of the paper is devoted to reviewing all the concepts mentioned in the previous paragraph, making the paper relatively self-contained.

In the following, we give an extended introduction with more details and motivation.

\subsection*{Spans and categorification} 

Partially defined functions, unbounded operators, cobordisms, and (derived) Lagrangian correspondences all share a common feature: they are examples of spans in some category or $\infty$-category. A \emph{span} in a category or $\infty$-category $\scr{C}$ is a diagram 
\[
\begin{tikzcd}
	 & F\arrow[dl,"s"']\arrow[dr,"t"] & \\
	 X & & Y
\end{tikzcd}
\]
in $\scr{C}$. If $\scr{C}$ has finite limits, then spans can be composed, so a span as above can be interpreted as a morphism from $X$ to $Y$ in some appropriate (higher) category. In particular, one can consider the following:
\begin{itemize}
    \item The ($1$-)category for which the objects are the objects of $\scr{C}$, and the morphisms are isomorphism classes of spans in $\scr{C}$. We will denote this category as $\Span_1(\scr{C})$.
    \item The bicategory for which the objects are the objects of $\scr{C}$, the morphisms are spans in $\scr{C}$, and the $2$-morphisms are morphisms of spans. We will denote this bicategory as $\Span(\scr{C})$. This bicategory (with $\scr{C} = \Set$) will be the main focus of our paper, and it is described in more detail in Section \ref{sec:span}.
    \item When $\scr{C}$ is an $\infty$-category, a coherent version of the construction of the bicategory of spans yields an $(\infty,2)$-category. We will use the same notation as in the case where $\scr{C}$ is a 1-category, i.e., $\Span(\scr{C})$, to denote this $(\infty,2)$-category. In practice, many of the results we refer to about this $(\infty,2)$-category are really about its underlying $(\infty,1)$-category.   
\end{itemize}
We note that $\Span_1(\scr{C})$ can be recovered from $\Span(\scr{C})$ as the homotopy $1$-category.

Such span categories have been the subject of much interest in recent decades. Motivated by mathematical physics, Baez's \emph{groupoidification program} (see \cite{BaezGroupoid}) sought to understand linear algebra in terms of spans of groupoids. In that article, Baez, Hoffnung, and Walker, following ideas sketched in \cite{BaezDolan:fromfinitesets}, recast creation and annihilation operators on Fock space, Feynman diagrams, and various combinatorial algebras in terms of spans of groupoids. 

The threads introduced in \cite{BaezGroupoid} have also been taken up in the higher categorical setting. For example, the more recent \emph{homotopy linear algebra} \cite{GKT_HLA} of G\'alv\'ez-Carrillo, Kock, and Tonks extended the categorification into the realm of $\infty$-groupoids. Their work was aimed at studying categorifications of combinatorial (co)algebras, but otherwise bears close relations to that of Baez, Hoffnung, and Walker. 

In both \cite{BaezGroupoid} and \cite{GKT_HLA}, the key principle is that for certain sufficiently well-behaved spans 
\[
\begin{tikzcd}
	X  & F\arrow[l,"s"'] \arrow[r,"t"] & Y
\end{tikzcd}
\]
of sets, groupoids, or spaces, one can obtain a linear map of free vector spaces
\[
\begin{tikzcd}
	{\mathbb{Q}[\pi_0(X)]} \arrow[r]  & {\mathbb{Q}[\pi_0(Y)]}
\end{tikzcd}
\]
by a certain push-pull operation. The matrix elements of the linear maps are given by set, groupoid, or $\infty$-groupoid cardinalities of the appropriate fibers of $(s,t):F\to X\times Y$. This construction allows one to lift the structure maps defining (co)algebras in vector spaces to spans, and study the categorified algebraic structures in the appropriate span category. 

\subsection*{Algebraic structures in spans and 2-Segal objects}

One key to unraveling the structure of algebras of spans is their close relation to simplicial objects 
in $\scr{C}$ satisfying the \emph{2-Segal conditions}, which were introduced independently by Dyckerhoff and Kapranov in \cite{Dyckerhoff-Kapranov:Higher} and G\'alv\'ez-Carrillo, Kock, and Tonks in \cite{GKT1} (under the name \emph{decomposition spaces}). An associative algebra in $\Span(\scr{C})$ consists of an object $X_1\in \scr{C}$ which carries the algebraic structure, together with a \emph{unit} 
\[
\begin{tikzcd}
	\pt  & X_0 \arrow[l]\arrow[r,"s_0"] & X_1
\end{tikzcd}
\]
and $n$-fold multiplications 
\[
\begin{tikzcd}
	\underbrace{X_1\times \cdots\times X_1}_{n} & X_n \arrow[l]\arrow[r] & X_1 
\end{tikzcd}
\]
satisfying coherence conditions which encode unitality and associativity. From these data, one can extract a simplicial object 
\[
\begin{tikzcd}
	\cdots X_2 \arrow[r,shift right=0.3cm]\arrow[r,shift left=0.3cm] \arrow[r]  & X_1 \arrow[r,shift right=0.15cm]\arrow[r, shift left=0.15cm] \arrow[l,shift right=0.15cm]\arrow[l,shift left=0.15cm] & X_0 \arrow[l]
\end{tikzcd}
\]
in $\scr{C}$. The associativity and unitality conditions then require that the squares 
\[
\begin{tikzcd}
	X_n \arrow[r]\arrow[d] & X_{i,\ldots, j}\arrow[d]\\
	X_{0,\ldots,i,j,\ldots, n}\arrow[r] & X_{i,j}
\end{tikzcd} \quad \text{and}\quad \begin{tikzcd}
X_n \arrow[r,"s_i"]\arrow[d] & X_{n+1}\arrow[d,"{\{i,i+1\}}"]\\
X_0\arrow[r,"s_0"'] & X_1
\end{tikzcd}
\]
are pullback squares in $\scr{C}$. These are the 2-Segal conditions of \cite{Dyckerhoff-Kapranov:Higher}.\footnote{Technically, only the associativity conditions --- those encoded by the the left-hand squares --- are the 2-Segal conditions as defined in \cite{Dyckerhoff-Kapranov:Higher}, but the result of \cite{FGKW:unital} shows that, remarkably, in this case associativity implies unitality, and so the right-hand squares follow.}

Both groups of authors in \cite{Dyckerhoff-Kapranov:Higher} and \cite{GKT1} noted that there was a natural construction that yielded an algebra in $\Span(\scr{C})$ from a given 2-Segal simplicial object in $\scr{C}$. In \cite{Stern:span}, the last-named author showed that this construction was, in fact, an equivalence of $\infty$-categories, with suitable restrictions placed on the 1-morphisms.  In a similar vein, the first- and second-named authors classified algebras in $\Span_1(\Set)$ 
in terms of $2$-coskeletal simplicial sets satisfying weaker versions of the $2$-Segal conditions \cite{ContrerasMehtaSpan}. 

The present paper is aimed at expanding these results to algebras with additional structure: commutative ($E_\infty$) algebras and Frobenius algebras. While we focus exclusively on the case $\scr{C} = \Set$ throughout this paper, many of our results hold in greater generality. We also focus here on bijections of equivalence classes of objects, rather than equivalences of categories. This focus allows for more explicit constructions that should make the work accessible to a wider audience. In future work, we will prove higher-categorical analogues of the main results here. 

\subsection*{Topological field theories}

In addition to the motivation coming from symplectic geometry, the work here is partially motivated by a desire to study 2-dimensional topological field theories and 3-2-1 topological field theories. In the long term we will, in fact, study topological field theories valued in categories of Lagrangian correspondences, uniting the two motivations we have mentioned. 

A 2-dimensional closed (oriented) topological field theory consists of a symmetric monoidal functor 
\[
\begin{tikzcd}
	\on{Bord}_2^{\on{or}} \arrow[r] & \scr{C} 
\end{tikzcd}
\]
out of the category $\on{Bord}_2^{\on{or}}$ whose objects are disjoint unions of circles. Classically, the morphisms of $\on{Bord}_2^{\on{or}}$ are boundary-preserving diffeomorphism classes of (oriented) cobordisms. There are, however, variants of this category which remember more of the structure. In the realm of 2-categories, one can, instead, define a monoidal \emph{bicategory} $\scr{B}\!\on{ord}_2^{\on{or}}$ whose 2-morphisms are the isotopy classes of boundary-preserving diffeomorphisms, so that $\scr{B}\!\on{ord}_2^{\on{or}}$ remembers the mapping class groups of the cobordisms. Pushing these upward extensions to their logical conclusion, one could instead define an $(\infty,1)$-category, in which all of the higher isotopies are also encoded. 

2-categorical and $(\infty,1)$-categorical topological field theories are desirable inasmuch as the corresponding surface invariants come equipped with (possiby coherent) actions of the mapping class groups. However, they are often quite difficult to construct and classify. In the 1-categorical setting, functors out of $\on{Bord}_2^{\on{or}}$ to a symmetric monoidal 1-category $\scr{C}$ are known to correspond to commutative Frobenius algebras in $\scr{C}$. As such, the results of \cite{ContrerasMehtaSpan} are aimed at providing a classification of closed 2-dimensional topological field theories in the homotopy 1-category of spans of sets. 

No such classification is known for the 2-categorical and $(\infty,1)$-categorical cases. In the 2-categorical case, it should be possible to give classifying data in terms of generating 1- and 2-morphisms and relations, though to the best of our knowledge, this has not yet been accomplished. However, underlying a symmetric monoidal 2-functor $Z:\scr{B}\!\on{ord}_2^{\on{or}}\to \scr{C}$, there should be the following data:
\begin{itemize}
	\item A coherently commutative algebra in $\scr{C}$, obtained by restricting $Z$ to many-legged pairs of pants. 
	\item A homotopically non-degenerate trace $Z(S^1)\to Z(\varnothing)$, given by applying $Z$ to the cap.  
\end{itemize}
This means that, even in the absence of a full classification, a higher-categorical closed 2-dimensional topological field theory must contain a categorification of a commutative Frobenius algebra. A formal reflection of the presence of this data is implicit in \cite{CraneYetter}, where the authors explore coherent algebraic structures which are present in the data of a 3-2-1 topological field theory. 

In this work, we classify Frobenius pseudomonoids and commutative ($E_\infty$) pseudomonoids in the bicategory of spans of sets, effectively laying the groundwork for later work to construct 2-categorical topological field theories valued in spans. We defer a systematic treatment of such constructions, and of related state-sum constructions, to later work.  

\subsection*{Main results}

The two main results of this paper are classifications of algebraic structures in spans. Let $\Span = \Span(\Set)$. In keeping with traditional bicategorical terminology, we term what might otherwise be called an associative algebra in $\Span$ a \emph{pseudomonoid} in $\Span$. 

Our starting point in the paper is the following corollary of \cite[Theorem 2.25]{Stern:span}

\begin{thm*}
	There is a bijection between equivalence classes of pseudomonoids in $\Span$ and isomorphism classes of 2-Segal simplicial sets $X:\Delta^\op\to \Set$.
\end{thm*}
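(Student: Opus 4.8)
The plan is to deduce the statement directly from the cited equivalence of $\infty$-categories, so that the real content is the translation between the $(\infty,1)$-categorical language of \cite{Stern:span} and the bicategorical language used here, followed by a passage to equivalence classes on both sides. Theorem 2.25 of \cite{Stern:span} provides an equivalence of $\infty$-categories between associative algebra objects in the monoidal $\infty$-category $\Span(\scr{C})$ and the $\infty$-category of $2$-Segal simplicial objects in $\scr{C}$, subject to the appropriate restriction on $1$-morphisms. First I would specialize this to $\scr{C} = \Set$, regarded as a $1$-category via its nerve.

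The key step is to identify associative algebra objects in the $(\infty,1)$-categorical $\Span(\Set)$ with pseudomonoids in the bicategory $\Span$. This is a coherence and truncation argument: because the mapping objects $\Hom_{\Span}(X,Y)$ are (nerves of) ordinary $1$-categories of spans, the $(\infty,2)$-category $\Span(\Set)$ has no nontrivial morphisms above level $2$. Consequently, an $E_1$-structure---which a priori involves coherence data in all dimensions---is entirely determined by its behavior at level $2$, and the higher coherences are automatically and uniquely satisfied. What remains (a multiplication, a unit, an associator, and left and right unitors, together with the pentagon and triangle identities) is exactly the data and axioms of a pseudomonoid. I would make this precise either by invoking the general fact that $E_1$-algebras in the nerve of a monoidal bicategory coincide with pseudomonoids therein, or by directly unwinding the low-dimensional simplices of the relevant operadic diagram.

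Finally, I would pass to equivalence classes. An equivalence of $\infty$-categories induces a bijection on equivalence classes of objects, i.e.\ on $\pi_0$ of the underlying $\infty$-groupoids, equivalently on isomorphism classes in the homotopy categories. On the algebra side this yields a bijection of equivalence classes of pseudomonoids, using the identification of the previous paragraph. On the simplicial side, since $\Set$ is a $1$-category, the functor $\infty$-category $\Fun(\Delta^{\op}, \Set)$ is itself a $1$-category, so its equivalences are precisely isomorphisms; as the $2$-Segal condition is preserved under this identification, equivalence classes there are exactly isomorphism classes of $2$-Segal simplicial sets. Composing these bijections gives the claimed correspondence.

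The hard part will be the coherence step: carefully justifying that the $\infty$-categorical notion of associative algebra object in $\Span(\Set)$ collapses onto the bicategorical notion of pseudomonoid, with matching notions of equivalence, rather than onto some weaker or stricter structure. A secondary point to verify is that the restriction placed on $1$-morphisms in \cite{Stern:span} does not affect the resulting bijection on equivalence classes of objects.
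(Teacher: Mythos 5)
Your proposal is correct and, at the headline level, coincides with the paper's: the theorem is stated there precisely as a corollary of \cite[Theorem 2.25]{Stern:span}, so deducing it from that equivalence and then passing to equivalence classes is the intended derivation. The difference lies in what gets made explicit. You place the weight on the coherence--truncation step --- identifying associative algebra objects in the $(\infty,2)$-category $\Span(\Set)$, whose mapping objects are $1$-categories, with pseudomonoids, and checking that the restriction on $1$-morphisms in the cited theorem does not affect equivalence classes of objects --- and you correctly flag these as the points needing real work. The paper instead sidesteps the abstract comparison by giving, in Section \ref{sec:pseudospan}, an explicit elementary construction of the correspondence: the multiplication is the span $X_1\times X_1 \leftarrow X_2 \rightarrow X_1$ of \eqref{eqn:mu}, the associator is $\hat{\mathcal{T}}_{02}\circ \hat{\mathcal{T}}_{13}^{-1}$, the unitors come from the unitality pullbacks \eqref{eqn:unitality}, and the pentagon and triangle equations are verified via the poset of subdivided polygons (the cellular associahedra $K_3$ and $K_4$), with the converse direction sketched by iterated composition of the multiplication. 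Your route buys brevity and generality at the cost of a coherence argument that is folklore but nowhere written out in this paper; the paper's route buys explicit formulas that are reused throughout the later proofs, and accessibility to readers unfamiliar with $\infty$-categories, at the cost of only sketching the passage from pseudomonoids back to simplicial sets. Neither approach is a gap in the other, but if you pursue your version you must supply the identification of $E_1$-algebras in a monoidal bicategory with pseudomonoids in genuine detail (including the matching of the two notions of equivalence) rather than asserting it.
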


A pseudomonoid $X$  in a bicategory $\scr{B}$ equipped with a non-degenerate trace from $X$ to the monoidal unit $I$ is termed a \emph{Frobenius pseudomonoid}. Our first main result extends the classification of the theorem above to Frobenius pseudomonoids. To do so, we must replace the simplex category $\Delta$ with the \emph{paracyclic category} $\Lambda_{\infty}$. A paracyclic set, i.e., a functor from $\Lambda_{\infty}^\op \to \Set$, consists of a simplicial set $X$ together with $\mathbb{Z}$-actions on each $X_n$ compatible with the simplicial maps. Such a paracylic set is called 2-Segal when the underlying simplicial set is 2-Segal. Our first main result is then

\begin{thm*}[Theorem \ref{thm:paracyclic}]
	The bijection between pseudomonoids in $\Span$ and 2-Segal simplicial sets lifts to a bijection between equivalence classes of Frobenius pseudomonoids in $\Span$ and 2-Segal paracyclic sets $X:\Lambda^\op\to \Set$. 
\end{thm*}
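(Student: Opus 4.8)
The plan is to work relative to the already-established bijection between pseudomonoids in $\Span$ and 2-Segal simplicial sets: for a fixed pseudomonoid $X$ with corresponding 2-Segal simplicial set $X_\bullet$, I would show that the set of non-degenerate traces on $X$ is in natural bijection with the set of $\mathbb{Z}$-actions on the $X_n$ compatible with the face and degeneracy maps, i.e.\ with the lifts of $X_\bullet$ to a paracyclic set. Once such a fiberwise bijection is produced naturally in $X$, it will automatically carry equivalences of Frobenius pseudomonoids to isomorphisms of paracyclic sets, and composing with the existing classification yields the claimed bijection on equivalence classes. So the real work is to translate ``non-degenerate trace'' into ``compatible $\mathbb{Z}$-action'' and back.

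First I would unwind the Frobenius data in $\Span$ concretely. A trace is a morphism $\epsilon\colon X \to I$, i.e.\ a span $X_1 \leftarrow E \to \pt$, which is the same as a set $E$ over $X_1$; composing with the multiplication span $X_1\times X_1 \xleftarrow{(d_2,d_0)} X_2 \xrightarrow{d_1} X_1$ produces the pairing $X_1\times X_1 \to I$, realized as the span with apex $X_2\times_{X_1} E$. Non-degeneracy means this pairing exhibits $X_1$ as self-dual: there is a copairing $I \to X_1\times X_1$ for which the two zig-zag composites are isomorphic, as spans, to the identity span on $X_1$. In $\Span$ such 2-isomorphisms are isomorphisms of the apex sets over $X_1\times X_1$ (respectively $X_1$), so the snake identities become the assertion that certain iterated fiber products are in bijection with $X_1$. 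I would record these bijections explicitly, as they are precisely the raw material for the rotation operators.

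The main step --- and the expected obstacle --- is the construction of the paracyclic operators $t_n\colon X_n \to X_n$ from the Frobenius data and the verification of the paracyclic relations. The idea is that an $n$-simplex parametrizes an $n$-fold multiplication with $n+1$ external legs, and the self-duality of $X_1$ furnished by a non-degenerate trace allows one to cyclically rotate which leg is regarded as the output; concretely $t_n$ is assembled from the pairing, the copairing, and the 2-Segal fillers, with $t_1$ the Nakayama-type automorphism exchanging the two slots of the pairing. Here non-degeneracy is exactly what makes each $t_n$ a bijection, and the absence of any symmetry hypothesis on the trace is what prevents $t_n^{\,n+1}$ from being the identity, so that the $t_n$ generate genuine $\mathbb{Z}$-actions rather than finite cyclic ones --- this is the structural reason paracyclic, rather than cyclic, sets appear. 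The bulk of the labor is checking that the $t_n$ intertwine the face and degeneracy maps according to the defining relations of $\Lambda_\infty$; I expect this to reduce, via the explicit span descriptions, to the snake identities and the 2-Segal pullback conditions, so that the duality $\Lambda_\infty \cong \Lambda_\infty^\op$ is what is being witnessed at the level of sets.

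Finally I would run the construction in reverse: given a 2-Segal paracyclic set, recover the trace by using $t_1$ to ``reverse'' $1$-simplices, thereby extracting both the counit $E \to X_1$ and the copairing, and prove non-degeneracy directly from the invertibility of the $t_n$ together with the 2-Segal conditions. Checking that these two assignments are mutually inverse is then a matter of comparing the explicit formulas; the snake identities guarantee that rotating and then un-rotating returns the original datum. As the constructions are natural in $X$, they descend to the desired bijection between equivalence classes of Frobenius pseudomonoids and isomorphism classes of 2-Segal paracyclic sets.
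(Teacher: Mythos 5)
Your overall strategy is sound and would lead to a proof, but it is a genuinely different route from the paper's, and the step you defer as ``the bulk of the labor'' is exactly where the paper's argument earns its keep. You propose to build the rotation operators $t_n\colon X_n\to X_n$ directly as zig-zags out of the pairing, the copairing, and the $2$-Segal fillers, and then to verify the defining relations of $\Lambda_\infty$ for these $t_n$. The paper instead never composes with the copairing at all: it uses the presentation of a paracyclic set by \emph{extra degeneracy maps} $s^n_{n+1}\colon X_n\to X_{n+1}$ subject to the almost-simplicial relations \eqref{eqn:paracyclicgamma1}--\eqref{eqn:paracyclicgamma2}, recovering $\tau^n$ only afterwards as $d_0^{n+1}s^n_{n+1}$. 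Concretely, it first normalizes the biexact pairing into the form $X_1\times X_1\xleftarrow{(\id,\tau^1)}X_1\to\pt$ using the uniqueness statement of \cite[Lemma 3.4]{Stern:span} (this is the precise form of your ``Nakayama-type automorphism,'' and it is also what makes the assignment well defined on \emph{equivalence classes} of Frobenius structures, since two counits are $2$-isomorphic iff the resulting maps $s^0_1=\tau^1 s^0_0$ agree); it then extracts $s^1_2$ and a pullback square from the single equation $\alpha=\varepsilon\circ\mu$, and defines $s^n_{n+1}\psi$ for $n\ge 2$ by gluing the one degenerate triangle $s_2e_\out\psi$ onto the outer edge of the $(n+1)$-gon. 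The payoff is that every paracyclic relation then localizes to a triangle or quadrilateral and reduces to an $n=2$ or $n=3$ computation in the polygon calculus, and invertibility of $\tau^n$ reduces to invertibility of $\tau^1$. Your direct construction of $t_n$ would instead require showing that an a priori span-valued composite (involving the copairing) has apex canonically identified with $X_n$ with one leg the identity, and then verifying the relations \eqref{eqn:paracyclicface}--\eqref{eqn:paracyclicdegen} for all $n$ from the snake identities; this is doable but substantially more bookkeeping, and as written your proposal does not supply the mechanism (an analogue of the localization to low-dimensional polygons) that would make those verifications finite. Your remarks on why the structure is paracyclic rather than cyclic, and on recovering the counit from $\tau^1$ in the reverse direction, match the paper's.
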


Similarly, a pseudomonoid equipped with symmetry 2-isomorphism relating the composites of the multiplication with permutations to one another is called a \emph{commutative pseudomonoid}. To classify these, we must consider the category $\Phi_\ast$ of finite pointed cardinals, which admits a bijective-on-objects functor $\Cut:\Delta^\op \to \Phi_\ast$. A functor $X:\Phi_\ast\to \Set$ is then said to be 2-Segal if the simplicial set $X\circ \Cut$ is 2-Segal. 

\begin{thm*}[Theorem \ref{thm:equiv_phistar_comm}]
	The bijection between pseudomonoids in $\Span$ and 2-Segal simplicial sets lifts to a bijection between equivalence classes of commutative pseudomonoids and isomorphism classes of 2-Segal functors $X:\Phi_\ast\to \Set$. 
\end{thm*}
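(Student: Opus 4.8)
The plan is to reduce the statement to the base bijection between pseudomonoids and $2$-Segal simplicial sets, and then to show that the \emph{extra} data on the two sides — a commutativity structure on the one hand, and an extension of a simplicial set along $\Cut$ to a functor on $\Phi_\ast$ on the other — carry precisely the same information. The first step is to unwind what a symmetry $2$-isomorphism amounts to in $\Span$. Since $2$-morphisms in $\Span$ are isomorphisms of spans, the symmetry relating the multiplication span $X_1\times X_1 \leftarrow X_2 \to X_1$ to its precomposite with the swap is concretely a bijection $s\colon X_2\to X_2$ satisfying $d_0 s = d_2$, $d_2 s = d_0$, and $d_1 s = d_1$, where $d_0,d_1,d_2$ are the face maps of the underlying simplicial set $X$. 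In other words, $s$ is exactly an action of the nontrivial element of $\Sigma_2 = \Aut_{\Phi_\ast}(\fin 2)$ on $X_2$ compatible with the simplicial faces, and the commutativity coherence axioms become honest equations among such bijections.

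The second step is a generators-and-relations analysis of $\Cut\colon \Delta^\op \to \Phi_\ast$, which I would isolate as a lemma since it is the combinatorial backbone of both directions. The claim is that every morphism of $\Phi_\ast$ can be built from morphisms in the image of $\Cut$ together with permutations, so that $\Phi_\ast$ is generated by the image of $\Cut$ and the automorphism groups $\Sigma_n = \Aut_{\Phi_\ast}(\fin n)$; moreover the relations, modulo those already present in $\Delta^\op$, are generated by the Coxeter relations among adjacent transpositions $\tau_i$ and the ``mixed'' relations recording how each $\tau_i$ commutes past the faces and degeneracies in the image of $\Cut$. Consequently, giving a functor $X\colon \Phi_\ast \to \Set$ is the same as giving a simplicial set $X\circ\Cut$ together with $\Sigma_n$-actions on each $X_n$ satisfying these mixed compatibilities; and by definition such a functor is $2$-Segal exactly when $X\circ\Cut$ is.

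For the forward direction I start from a commutative pseudomonoid, take the underlying $2$-Segal simplicial set $X$ from the base theorem, and take the bijection $s$ from the first step. Using the $2$-Segal triangulation isomorphisms, which express $X_n$ as an iterated fiber product $X_2\times_{X_1}\cdots\times_{X_1}X_2$ of copies of $X_2$ over $X_1$, I propagate $s$ to adjacent-transposition bijections $\tau_i\colon X_n\to X_n$ and verify, via the coherence axioms, the relations identified in the lemma: involutivity of the symmetry gives $\tau_i^2=\id$, the hexagon axiom gives the braid relation $\tau_i\tau_{i+1}\tau_i=\tau_{i+1}\tau_i\tau_{i+1}$, a locality argument gives $\tau_i\tau_j=\tau_j\tau_i$ for $|i-j|\ge 2$, and the mixed relations follow from the interaction of $s$ with the unit and with associativity. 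This assembles into a $2$-Segal functor on $\Phi_\ast$. Conversely, from a $2$-Segal $\Phi_\ast$-functor I restrict along $\Cut$ to obtain a pseudomonoid and read off the symmetry bijection from the generator of $\Sigma_2$; the mixed and Coxeter relations guaranteed by functoriality translate back into exactly the commutativity coherence axioms. Finally I check that these assignments are well-defined on, and mutually inverse on, the relevant equivalence and isomorphism classes, leaning on the fact that the base correspondence is already a bijection.

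The main obstacle I anticipate is the coherence bookkeeping in the forward direction: showing that the propagated transpositions $\tau_i$ are well-defined independently of the chosen $2$-Segal triangulation of $X_n$, and that the commutative-pseudomonoid axioms — involutivity and the hexagon, together with naturality of the symmetry against the associator and the unitors — are not merely sufficient but \emph{equivalent} to the full package of symmetric-group and mixed relations defining a functor on $\Phi_\ast$. This is where the direct and computational nature of the span setting should pay off: each $2$-isomorphism is an honest bijection of sets and each coherence diagram becomes an equation between such bijections, so the required equivalence can be settled by explicit, if careful, diagram chases rather than by invoking abstract coherence theorems.
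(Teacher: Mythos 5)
Your proposal is correct and follows essentially the same route as the paper: identify the symmetry $2$-isomorphism with an involution of $X_2$ compatible with the face maps, present $\Phi_\ast$ by generators and relations over the image of $\Cut$ (the paper's Theorem \ref{thm:finstar}), propagate the involution to adjacent transpositions $\theta_i^n$ via the $2$-Segal triangulation isomorphisms, and verify the Coxeter and mixed relations by locality and reduction to low degrees. The only bookkeeping difference is that in the paper the hexagon axiom is shown to be equivalent to the mixed relations governing how $\theta_2^3\theta_1^3$ interacts with the outer face maps (equations \eqref{eqn:hexagonequivalent}), and the braid relation is then \emph{deduced} from these face compatibilities via the $2$-Segal injectivity, rather than being read off from the hexagon directly as you suggest.
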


\subsection*{Future directions}
One of the expected applications of Theorem \ref{thm:paracyclic} is in the higher categorical interpretation of the symplectic category and its connections to TQFT. In particular, we intend to describe the Wehrheim-Woodward construction \cite{WW} of the symplectic category and algebraic structures therein in terms of 2-Segal simplicial objects. This approach differs from the derived version of the symplectic category in AKSZ theories \cite{CALAQUE2014926}, but we plan to relate the two descriptions in specific cases, e.g. when the target category is linear.

In a related direction, we plan to obtain (coherent) state sum models for open--closed TQFTS with values in the bicategory Span, following \cite{Lauda-Pfeiffer}.
In a work in progress, we prove that a state sum construction arising from a Frobenius pseudomonoid in Span satisfies the $(2,2)$- and, under certain conditions, the $(3,1)$-Pachner moves, which proves that in those cases, the partition function is independent of the triangulation of the corresponding surface. Additionally, we plan to lift 3-2-1 TFTs and related algebraic structures valued in Kapranov-Voevodsky 2-vector spaces to spans of groupoids (c.f. e.g. \cite{Morton}), providing a conceptual and categorical generalization of some of the ideas in \cite{CraneYetter}.

\subsection*{Structure of the paper}

In Section \ref{sec:span}, we review the definition of the bicategory $\Span$, as well as its symmetric monoidal structure In Section \ref{sec:pseudo}, we review the definitions of Frobenius and commutative pseudomonoid in a (symmetric) monoidal bicategory.

In Section \ref{sec:2segalpseudo}, we review the basics of $2$-Segal sets and sketch the correspondence in \cite{Stern:span} between $2$-Segal sets and pseudomonoids in $\Span$. In particular, in Section \ref{sec:faceanddegen}, we describe a graphical calculus for face and degeneracy maps in $2$-Segal sets that is used extensively in the proofs of our main results.

In Section \ref{sec:paracyclicfrobenius}, we review paracyclic structures and then prove our first main result, that Frobenius pseudomonoids in $\Span$ correspond to $2$-Segal paracyclic sets.

In Section \ref{sec:gammacommutative}, we review $\Gamma$-structures and then prove our second main result, that commutative pseudomonoids in $\Span$ correspond to $2$-Segal $\Gamma$-sets.

\subsection*{Acknowledgements}
We thank the anonymous referee for useful comments and suggestions.
R.M. thanks the University of Illinois, Urbana-Champaign for hospitality during a sabbatical when much of this work was completed. WHS thanks Julie Bergner for useful conversations about 2-Segal sets, and wishes to acknowledge the support of the NSF Research Training Group at the University of Virginia (grant number DMS-1839968) during the preparation of this work. I.C. thanks the Amherst College Provost and Dean of the Faculty’s Research Fellowship (2021-2022).

\section{The bicategory of spans}\label{sec:span}

In this section, we briefly review the bicategory of spans and its symmetric monoidal structure. This isn't intended to be a thorough treatment, but only a sufficient amount of information required to understand the definitions in Section \ref{sec:pseudo}. For more details, we refer the reader to \cite{CKWW} for a construction of the symmetric monoidal bicategory of spans in a category with finite limits, and to \cite{stay}, where a clear and complete definition of symmetric monoidal bicategories is given, and where bicategories of spans appear as an example. We also refer to \cite{ahmadi}, which gives a nice overview, eliminating some redundant conditions in \cite{stay} (but doesn't cover symmetric structures), and \cite{Johnson-Yau}, where the construction of the bicategory of spans is explicitly described in Example 2.1.22.

\begin{rmk}[Bicategorical conventions]
	Before proceeding further into our discussion of spans, let us fix some notational conventions for bicategories which we will use in the remainder of the paper. In a bicategory $\scr{B}$, we will denote by $\id_X$ (or sometimes simply $\id$, where the object is clear from context) the identity 1-morphism on an object $X$. We will denote by $\on{Id}_f$ (or, as before, simply $\on{Id}$) the identity 2-morphism on a 1-morphism $f$. In the case of a monoidal bicategory $\scr{B}$, we will use $I$ to denote the monoidal unit. 
\end{rmk}

The bicategory $\Span$ is given by the following data:
\begin{itemize}
    \item objects are sets;
    \item a morphism from a set $X$ to a set $Y$ is a span $X \xleftarrow{f_1} A \xrightarrow{f_2} Y$, which we will sometimes also denote as $f: X \leftarrow A \rightarrow Y$;
    \item a $2$-morphism from $X \xleftarrow{f_1} A \xrightarrow{f_2} Y$ to $X \xleftarrow{g_1} B \xrightarrow{g_2} Y$ is a map $h: A \to B$ such that $g_i h = f_i$;
    \item the identity morphism is the span $\id_X: X \xleftarrow{\id} X \xrightarrow{\id} X$;
    \item the composition of $f: X \xleftarrow{f_1} A \xrightarrow{f_2} Y$ with $g: Y \xleftarrow{g_1} B \xrightarrow{g_2} Z$ is $g \circ f: X \xleftarrow{f_1 p_1} A \times_Y B \xrightarrow{g_2 p_2} Z$, where
    \[ A \times_Y B = \{(a,b) \in A \times B \suchthat f_2(a) = g_1(b)\}\]
    and $p_i$ are the projection maps onto the two components.
\end{itemize}
The remaining bicategorical data (horizontal composition of $2$-morphisms, associator, and unitors) arise naturally from the universal property of pullbacks. In particular, the associator is given by the canonical rebracketing isomorphism 
\[
(A \times_X B) \times_Y C \cong A \times_X (B \times_Y C),
\]
and the unitors are given by the canonical isomorphisms $X \times_X A \cong A \cong A \times_X X$. We will generally use these isomorphisms freely without making them explicit.

\begin{rmk}
	Throughout this paper, we will depict 1-morphisms in the monoidal bicategory $\Span$ with the aid of a variant on string diagrams. Since there are many conventions for diagrammatically depicting morphisms in various flavors of higher category, let us briefly comment on our conventions here. In general, our pictures will be of the form described in \cite[\S 2.3]{Baez2011} for monoidal 1-categories. While this convention necessitates some imprecision (as we will discuss below), we feel that this graphical representation is best suited to quickly connecting the reader's intuition to the technical definitions given.  
	\begin{itemize}
		\item We will draw a 1-morphism $f$ from $X$ to $Y$ in as a labeled strand 
		 \stringdiagram{
		 	\path (0,3.5) node {$X$};
		 	\identity{0}{2}
		 	\morphism{0}{0}{f}
		 	\identity{0}{-2}
		 	\path (0,-3.5) node {$Y$};
		}
		read from top to bottom. In most cases, where it is clear from context what the objects $X$ and $Y$ are, we will omit the corresponding labels. 
		\item Compositions will be depicted by vertical concatenation of diagrams, so that, for instance 
		\stringdiagram{
		\morphism{0}{0}{f}
		\morphism{0}{-2}{g}
		}
		represents the composite morphism $g\circ f$. 
		\item We will depict monoidal products by drawing two strands next to one another, so, for instance, the diagram
		\stringdiagram{
		\identity{0}{2}
		\morphism{0}{0}{f}
		\identity{0}{-2} 
		 \draw (1,0.6) rectangle (3,-0.6);
		 \path (2,0) node {$m$}; 
		 \draw (1.4,0.6) -- (1.4,3);
		 \draw (2.6,0.6) -- (2.6,3);
		 \draw (2,-0.6) -- (2,-3); 
		 \path (0,3.5) node {$X$}; 
		 \path (1.4,3.5) node {$Y$}; 
		 \path (2.6,3.5) node {$Z$}; 
		 \path (0,-3.5) node {$U$};
		 \path (2,-3.5) node {$V$};  
			}
		represents $f\otimes m :X\otimes (Y\otimes Z)\to U\otimes V$. 
	\end{itemize}  
	As mentioned above, when implemented in a monoidal bicategory, rather than a monoidal 1-category, this method of drawing diagrams involves some imprecisions. In particular:
	\begin{itemize}
		\item The composition and is not strictly associative, but rather has associativity \emph{data} given by 2-isomorphisms. This means that diagrams involving, e.g., a 2-fold composition do not yield a single, well-defined 1-morphism. However, the coherence of bicategories means that, given two ways of reading a given diagram, there is a unique 2-morphism built from associators which relates the two resulting 1-morphisms. Since the associators in $\Span$ are simply rebracketing isomorphisms, we feel that this does not substantially detract from the understandability of the diagrammatics. 
		\item Nearly identical considerations to those in the previous point apply to the monoidal product. Once again, associativity only holds up to rebracketing isomorphisms, but this does not substantially hinder the reader in understanding the diagrammatics.  
	\end{itemize} 
	One final word of warning for those already familiar with string diagrams for bicategories. One common convention is to draw pictures in which objects are represented by regions, morphisms by lines and 2-morphisms by vertices. We \emph{do not} make use of this pictorial formalism in this paper. 
\end{rmk}

The symmetric monoidal structure on $\Span$ comes from the Cartesian product. We highlight the data that appears in the definitions in Section \ref{sec:pseudo}.

\begin{itemize}
    \item Part of the data of a monoidal bicategory is what Stay \cite{stay} calls the \emph{tensorator}, which is an invertible $2$-morphism controlling the failure of the monoidal product to commute with composition. In the case of $\Span$, it has the following form. Given two pairs of composable spans $X \leftarrow A \rightarrow X' \leftarrow A' \rightarrow X''$ and $Y \leftarrow B \rightarrow Y' \leftarrow B' \rightarrow Y''$, the tensorator is the canonical isomorphism
    \[ (A \times B) \times_{X' \times Y'} (A' \times B') \cong (A \times_{X'} A') \times (B \times_{Y'} B'),\]
    arising from the universal property of pullbacks, relating the composition of products to the product of compositions.

    In particular, given spans $f: X \leftarrow A \rightarrow X'$ and $g: Y \leftarrow B \rightarrow Y'$, the tensorator and unitors give canonical isomorphisms
    \[ (A \times Y) \times_{X' \times Y} (X' \times B) \cong  (A \times_{X'} X') \times (Y \times_Y B) \cong A \times B\]
    and
    \[ (X \times B) \times_{X \times Y'} (A \times Y') \cong (X \times_X A) \times (B \times_{Y'} Y') \cong A \times B.\]
    Composing these isomorphisms gives an invertible $2$-morphism
    \[ (\id_{X'} \times g) \circ (f \times \id_Y) \xRightarrow{c_{f,g}} (f \times \id_{Y'}) \circ (\id_X \times g).\]
    Graphically, $c_{f,g}$ implements the ``slide move'' shown by the following string diagrams:
    \stringdiagram{
    \morphism{0}{1}{f}
    \identity{1}{1}
    \identity{0}{-1}
    \morphism{1}{-1}{g}
    \nattrans{3}{0}{c_{f,g}}
    \morphism{5}{-1}{f}
    \identity{6}{-1}
    \identity{5}{1}
    \morphism{6}{1}{g}
    }
    \item The braiding data in a braided monoidal bicategory includes, for every pair of objects $X,Y$, a morphism $\rho_{X,Y}: X \otimes Y \to Y \otimes X$ and, for every pair of morphisms $f:X \to X'$, $g: Y \to Y'$, a $2$-morphism $\rho_{f,g}: \rho_{X',Y'} \circ (f \otimes g) \Rightarrow (g \otimes f) \circ \rho_{X,Y}$.

    In $\Span$, we take $\rho_{X,Y}$ to be the span $X \times Y \xleftarrow{1} X \times Y \xrightarrow{\tilde{\rho}} Y \times X$, where $\tilde{\rho}$ is the map that exchanges the components. Given spans $f: X \leftarrow A \rightarrow X'$ and $g: Y \leftarrow B \rightarrow Y'$, the $2$-morphism $\rho_{f,g}$ is given by the isomorphism
    \[ (A \times B) \times_{X' \times Y'} (X' \times Y') \cong A \times B \cong (X \times Y) \times_{Y \times X} (B \times A),\]
    where on the left the projection of $A \times B$ onto the first component is the identity, and on the right the projection of $A \times B$ onto the second component is $\tilde{\rho}$.

    Graphically, we represent $\rho_{X,Y}$ as a crossing of strings
    \stringdiagram{\swap{0}{0}}
    and then $\rho_{f,g}$ implements the following move:
    \stringdiagram{
    \morphism{0}{1}{f}
    \morphism{1}{1}{g}
    \slimswap{.5}{-1}
    \nattrans{3}{0}{\rho_{f,g}}
    \slimswap{5.5}{1}
    \morphism{5}{-1}{g}
    \morphism{6}{-1}{f}
    }
    
    \item The braiding data in a braided monoidal bicategory also includes, for objects $X,Y,Z$, an invertible $2$-morphism 
    \begin{equation}\label{eqn:R}
    (\id_Y \otimes \rho_{X,Z}) \circ \alpha_{Y,X,Z} \circ (\rho_{X,Y} \otimes \id_Z) \xRightarrow{R_{X|YZ}} \alpha_{Y,Z,X} \circ \rho_{X,Y\otimes Z} \circ \alpha_{X,Y,Z},
    \end{equation}
    called the \emph{(left) hexagonator}. Here, $\alpha_{X,Y,Z}: (X \otimes Y) \otimes Z \to X \otimes (Y \otimes Z)$ is the associator for the monoidal structure.

    In $\Span$, we take $\alpha_{X,Y,Z}$ to be the span $(X \times Y) \times Z \xleftarrow{1} (X \times Y) \times Z \xrightarrow{\tilde{\alpha}} X \times (Y \times Z)$, where $\tilde{\alpha}$ is the natural rebracketing map. Then $R_{X|YZ}$ is the isomorphism of spans that arises from the unique identification of the compositions on both sides of \eqref{eqn:R} with $(X \times Y) \times Z \xleftarrow{1} (X \times Y) \times Z \rightarrow Y \times(Z \times X)$.

    In the graphical notation, the associators do not explicitly appear, which helps to see the true nature of the hexagonator as relating the two different ways to use braiding maps to go from $X \times Y \times Z$ to $Y \times Z \times X$: exchanging $X$ and $Y$ and then exchanging $X$ and $Z$, versus exchanging $X$ and $Y \times Z$:
    \stringdiagram{
    \slimswap{.5}{1}
    \identity{2}{1}
    \identity{0}{-1}
    \slimswap{1.5}{-1}
    \nattrans{5}{0}{R_{X|YZ}}
    \swaptwoc{8.5}{0}
    }

    \item The data of a symmetric monoidal bicategory\footnote{The definition of commutative pseudomonoid only requires a sylleptic monoidal structure. However, since symmetry is a property, not requiring additional data, and since $\Span$ satisfies this property, there is no need in this paper for us to consider structures that are sylleptic but not symmetric.}  includes, for objects $X,Y$, an invertible $2$-morphism
    \[ \rho_{Y,X} \circ \rho_{X,Y} \xRightarrow{v_{X,Y}} \id_{X\times Y},\]
    called the \emph{syllepsis}.

    In $\Span$, $v_{X,Y}$ is the unique isomorphism from the composition $X \times Y \xleftarrow{1} X \times Y \xrightarrow{\tilde{\rho}} Y \times X \xleftarrow{1} Y \times X \xrightarrow{\tilde{\rho}} X \times Y$ to the identity morphism on $X \times Y$.

    Graphically, we have
    \stringdiagram{
    \slimswap{.5}{1}
    \slimswap{.5}{-1}
    \nattrans{3}{0}{v_{X,Y}}
    \identity{5}{0}
    \identity{6}{0}
    }
\end{itemize}

\section{Pseudomonoids with Frobenius and commutative structures}\label{sec:pseudo}

In this section, we review definitions of Frobenius and commutative pseudomonoids. The notion of pseudomonoid dates back to \cite{DayStreet}. As is the case for Frobenius algebras, there are several different essentially equivalent definitions of Frobenius pseudomonoid. The definition we use coincides with that of \cite{Street:Frobenius}, which seems to be the simplest possible approach. We also mention \cite{Lauda:Frobenius} and \cite{DunnVicary}, which use an essentially identical definition with graphical presentations of the coherence conditions. A nice reference for commutative pseudomonoids is \cite{Verdon} (where the term ``symmetric'' is used instead of ``commutative''), which includes string diagrams that are similar in style to those here. 

\subsection{Pseudomonoids}

Let $\C$ be a monoidal bicategory. A \emph{pseudomonoid} in $\C$ consists of
\begin{itemize}
    \item an object $X$,
    \item morphisms $\eta: I \to X$ (unit) and $\mu: X \otimes X \to X$ (multiplication),
    \item invertible $2$-morphisms $a: \mu \circ (\mu \otimes \id_X) \Rightarrow \mu \circ (\id_X \otimes \mu)$ (associator), $\ell: \mu \circ (\eta \otimes \id_X) \Rightarrow \id_X$ (left unitor), and $r: \mu \circ (\id_X \otimes \eta) \Rightarrow \id_X$ (right unitor),
\end{itemize}
satisfying two coherence conditions. The coherence conditions can be described using string diagrams, as follows. We represent $\eta$ and $\mu$, respectively, by the following diagrams (read top to bottom).
\stringdiagram{
\begin{scope}
    \unit{0}{0}
\end{scope}
\begin{scope}[shift={(6,0)}]
    \multiplication{0}{0}
\end{scope}
}
Note that we here depict the monoidal unit as a black circle, and do not label the strings. 

With these definitions, $a$, $\ell$ and $r$ can be drawn as morphisms of diagrams:
\stringdiagram{
\begin{scope} 
\multiplication{0}{1}
\identity{2}{1}
\multiplication{1}{-1}
\nattrans{3.5}{0}{a}
\identity{5}{1}
\multiplication{7}{1}
\multiplication{6}{-1}
\end{scope}

\begin{scope}[shift={(12,0)}]
\unit{0}{1}
\identity{2}{1}
\multiplication{1}{-1}
\nattrans{3.5}{0}{\ell}
\identity{5}{0}
\end{scope}

\begin{scope}[shift={(22,0)}]
\unit{2}{1}
\identity{0}{1}
\multiplication{1}{-1}
\nattrans{3.5}{0}{r}
\identity{5}{0}
\end{scope}
}
The coherence conditions consist of the \emph{triangle equation} 

	\stringdiagramlabel{
		\begin{scope}
			\begin{scope}
				\identity{0}{3}
				\unit{2}{3}
				\identity{3}{3}
				\multiplication{1}{1}
				\identity{3}{1}
				\multiplication{2}{-1}
				\highlight{-.2}{-2.2}{3.2}{2.2}
				\nattrans{4.5}{1}{a}
			\end{scope}
			\begin{scope}[shift={(6,0)}]
				\identity{0}{3}
				\unit{1}{3}
				\identity{3}{3}
				\multiplication{2}{1}
				\identity{0}{1}
				\multiplication{1}{-1}
				\highlight{.8}{.2}{3.2}{4.2}
				\nattrans{4.5}{1}{\ell}
			\end{scope}
			\begin{scope}[shift={(12,0)}]
				\multiplication{1}{1}
			\end{scope}
			\enclose{-.5}{-2.5}{14.5}{4.5}
		\end{scope}
		\equals{16}{1}
		\begin{scope}[shift={(18,0)}]
			\begin{scope}
				\identity{0}{3}
				\unit{2}{3}
				\identity{3}{3}
				\multiplication{1}{1}
				\identity{3}{1}
				\multiplication{2}{-1}
				\highlight{-.2}{.2}{2.2}{4.2}
				\nattrans{4.5}{1}{r}
			\end{scope}
			\begin{scope}[shift={(6,0)}]
				\multiplication{1}{1}
			\end{scope}
			\enclose{-.5}{-2.5}{8.5}{4.5}
		\end{scope}
	}{eqn:triangle}
and the \emph{pentagon equation}
\stringdiagramlabel{
\begin{scope}[scale=0.9]
\begin{scope}
    \begin{scope}
        \slimmultiplication{0}{5}
        \identity{2}{5}
        \identity{3}{5}
        \multiplication{1}{3}
        \identity{3}{3}
        \multiplication{2}{1}
        \nattrans{4.5}{3}{a}
        \highlight{-0.6}{2.2}{2.2}{6.2}
    \end{scope}
    \begin{scope}[shift={(6,0)}]
        \slimmultiplication{2}{5}
        \identity{0}{5}
        \identity{3}{5}
        \multiplication{1}{3}
        \identity{3}{3}
        \multiplication{2}{1}
        \nattrans{4.5}{3}{a}
        \highlight{-0.2}{-0.2}{3.2}{4.2}
    \end{scope}
    \begin{scope}[shift={(12,0)}]
        \slimmultiplication{1}{5}
        \identity{0}{5}
        \identity{3}{5}
        \multiplication{2}{3}
        \identity{0}{3}
        \multiplication{1}{1}
        \nattrans{4.5}{3}{a}
        \highlight{0.4}{1.8}{3.2}{6.2}
    \end{scope}
    \begin{scope}[shift={(18,0)}]
        \slimmultiplication{3}{5}
        \identity{0}{5}
        \identity{1}{5}
        \multiplication{2}{3}
        \identity{0}{3}
        \multiplication{1}{1}
    \end{scope}  
    \enclose{-1}{-.5}{22}{6.5}
\end{scope}
\equals{23}{3}
\begin{scope}[shift={(25,0)}]
    \begin{scope}
        \slimmultiplication{0}{5}
        \identity{2}{5}
        \identity{3}{5}
        \multiplication{1}{3}
        \identity{3}{3}
        \multiplication{2}{1}
        \nattrans{4.5}{3}{a}
        \highlight{-0.2}{-0.2}{3.2}{4.2}
    \end{scope}
    \begin{scope}[shift={(6,0)}]
        \slimmultiplication{0}{5}
        \identity{1}{5}
        \identity{3}{5}
        \multiplication{2}{3}
        \identity{0}{3}
        \multiplication{1}{1}
        \nattrans{4.5}{3}{c_{\mu,\mu}}
        \highlight{-0.6}{2.2}{3.2}{6.2}
    \end{scope}
    \begin{scope}[shift={(12,0)}]
        \slimmultiplication{3}{5}
        \identity{0}{5}
        \identity{2}{5}
        \multiplication{1}{3}
        \identity{3}{3}
        \multiplication{2}{1}
        \nattrans{4.5}{3}{a}
        \highlight{-0.2}{-0.2}{3.2}{4.2}
    \end{scope}
    \begin{scope}[shift={(18,0)}]
        \slimmultiplication{3}{5}
        \identity{0}{5}
        \identity{1}{5}
        \multiplication{2}{3}
        \identity{0}{3}
        \multiplication{1}{1}
    \end{scope}  
    \enclose{-1}{-.5}{22}{6.5}
\end{scope}
\end{scope}
}{eqn:pentagon}
In the above equations, the dotted boxes indicate the portion of the diagram where the $2$-morphism is applied.

We will see in Section 3.3 that this bicategorical manifestation of associativity is closely related to the \emph{associahedra}: polytopes introduced by Stasheff in \cite{Stasheff} which control coherent associativity for topological spaces. In particular, the pentagon equation can be reinterpreted as the commutativity of a diagram corresponding to the associahedron $K_4$, which is simply a pentagon (c.f. figure \ref{fig:assoc4}).

A reader who is unfamiliar with pseudomonoids may find the following examples to be helpful for developing intuition for the definition.

\begin{example}\label{ex:monoid1}
Let $\C$ be a monoidal category, viewed as a monoidal bicategory where the only $2$-morphisms are identities. Then the notion of pseudomonoid reduces to that of a monoid object in $\C$. In particular, the coherence conditions are trivial in this case.
\end{example}

\begin{example}\label{ex:moncat1}
Consider the bicategory $\Cat$, for which the objects are categories, the morphisms are functors, and the $2$-morphisms are natural transformations, equipped with the Cartesian (symmetric) monoidal structure. A pseudomonoid in $\Cat$ is the same thing as a monoidal category.
\end{example}

\subsection{Frobenius pseudomonoids}

Let $\C$ be a monoidal bicategory. A \emph{Frobenius pseudomonoid} in $\C$ is a pseudomonoid $X$ equipped with a \emph{counit} morphism $\varepsilon : X \to I$ such that the pairing $\alpha := \varepsilon \circ \mu : X \otimes X \to I$ is biexact, in the sense of \cite{Street:Frobenius}. Explicitly, biexactness means that there exists a \emph{copairing} $\beta: I \to X \otimes X$ and invertible $2$-morphisms $z: (\id_X \otimes \varepsilon) \circ (\id_X \otimes \mu) \circ (\beta \otimes \id_X) \Rightarrow \id_X$ and $n: (\varepsilon \otimes \id_X) \circ (\mu \otimes \id_X) \circ (\id_X \otimes \beta) \Rightarrow \id_X$ where, if we depict $\varepsilon$, $\alpha$, and $\beta$ respectively as
    \stringdiagram{
    \counit{0}{0}
    \begin{scope}[shift={(8,0)}]
        \pairing{0}{0}
        \equals{2}{0}
        \multiplication{4}{1}
        \counit{4}{-1}
    \end{scope}
    \copairing{20}{0}
    }
and show $z$ and $n$ as
    \stringdiagram{
    \begin{scope}
        \copairing{0}{1}
        \identity{3}{1}
        \identity{-1}{-1}
        \pairing{2}{-1}
        \nattrans{4.5}{0}{z}
        \identity{6}{0}
    \end{scope}
    \begin{scope}[shift={(12,0)}]
        \copairing{2}{1}
        \identity{-1}{1}
        \identity{3}{-1}
        \pairing{0}{-1}
        \nattrans{4.5}{0}{n}
        \identity{6}{0}
    \end{scope}
    }
then the following coherence conditions are satisfied:
\stringdiagram{
\begin{scope}
\begin{scope}
    \identity{0}{2}
    \copairing{3}{2}
    \identity{6}{2}
    \pairing{1}{0}
    \identity{4}{0}
    \identity{6}{0}
    \pairing{5}{-2}
    \nattrans{7.5}{0.5}{n}
    \highlight{-.2}{-.2}{4.2}{3.2}
\end{scope}
\begin{scope}[shift={(10,0)}]
    \pairing{0}{0}
\end{scope}
\enclose{-.5}{-2.5}{11.5}{3.5}
\end{scope}
\equals{13}{0}
\begin{scope}[shift={(15,0)}]
\begin{scope}
    \identity{0}{2}
    \copairing{3}{2}
    \identity{6}{2}
    \pairing{1}{0}
    \identity{4}{0}
    \identity{6}{0}
    \pairing{5}{-2}
    \nattrans{8}{0.5}{c_{\alpha, \alpha}}
    \highlight{-.2}{-2.2}{6.2}{1.2}
\end{scope}
\begin{scope}[shift={(10,0)}]
    \identity{0}{2}
    \copairing{3}{2}
    \identity{6}{2}
    \pairing{1}{-2}
    \identity{0}{0}
    \identity{2}{0}
    \pairing{5}{0}
    \nattrans{7.5}{0.5}{z}
    \highlight{1.8}{-.2}{6.2}{3.2}
\end{scope}
\begin{scope}[shift={(20,0)}]
    \pairing{0}{0}
\end{scope}
\enclose{-.5}{-2.5}{21.5}{3.5}
\end{scope}
}

\stringdiagram{
\begin{scope}
\begin{scope}
    \copairing{1}{2}
    \identity{0}{0}
    \identity{2}{0}
    \copairing{5}{0}
    \identity{0}{-2}
    \pairing{3}{-2}
    \identity{6}{-2}
    \nattrans{7.5}{-0.5}{n}
    \highlight{1.8}{-3.2}{6.2}{0.2}
\end{scope}
\begin{scope}[shift={(10,0)}]
    \copairing{0}{0}
\end{scope}
\enclose{-.5}{-3.5}{11.5}{2.5}
\end{scope}
\equals{13}{0}
\begin{scope}[shift={(15,0)}]
\begin{scope}
    \copairing{1}{2}
    \identity{0}{0}
    \identity{2}{0}
    \copairing{5}{0}
    \identity{0}{-2}
    \pairing{3}{-2}
    \identity{6}{-2}
    \nattrans{8}{-0.5}{c_{\beta,\beta}}
    \highlight{-.2}{-1.2}{6.2}{2.2}    
\end{scope}
\begin{scope}[shift={(10,0)}]
    \copairing{5}{2}
    \identity{4}{0}
    \identity{6}{0}
    \copairing{1}{0}
    \identity{0}{-2}
    \pairing{3}{-2}
    \identity{6}{-2}
    \nattrans{7.5}{-0.5}{z}
    \highlight{-0.2}{-3.2}{4.2}{0.2}    
\end{scope}
\begin{scope}[shift={(20,0)}]
    \copairing{0}{0}
\end{scope}
\enclose{-0.5}{-3.5}{21.5}{2.5}
\end{scope}
}
Given a pseudomonoid $X$, we consider two Frobenius structures on $X$ to be equivalent if their associated counit morphisms are $2$-isomorphic.

\begin{rmk}
    In \cite{DunnVicary,Lauda:Frobenius}, the copairing $\beta$ and the $2$-morphisms $z,n$ are included as part of the data of a Frobenius pseudomonoid. This is in contrast to the above definition, taken from \cite{Street:Frobenius}, where $\beta$ and $z,n$ are required to exist but are not specified. However, it can be shown that this data, if it exists, is unique up to coherent isomorphism. In this sense, the two versions of the definition are essentially equivalent.
\end{rmk}

\begin{example}
       Following up on Example \ref{ex:monoid1}, the notion of Frobenius pseudomonoid reduces to that of Frobenius object (see, for example, \cite{kock}) in the case where $\C$ is a monoidal category. In particular, when $\C = \Vect$ is the category of vector spaces with tensor product as the monoidal structure, we recover the notion of Frobenius algebra.
\end{example}

\begin{example}
    This is actually more of a nonexample, following up on Example \ref{ex:moncat1}. The only category that admits the structure of a Frobenius pseudomonoid in $\Cat$ is the terminal category with one object and only the identity morphism. 
    
    This fact is a categorification of the fact that the only Frobenius object in the category $\Set$ is the one-point set. One way to rectify this situation is to pass to $\Rel$, the category whose objects are sets and morphisms are relations. Frobenius objects in $\Rel$ have been studied in detail in \cite{CLMM, Mehta-Zhang}. In the categorified setting, the analogue is to consider Frobenius pseudomonoids in the bicategory of profunctors.
\end{example}

\subsection{Commutative pseudomonoids}\label{subsec:comm_pseudomonoids}

Let $\C$ be a symmetric monoidal bicategory. A \emph{commutative pseudomonoid} in $\C$ is a pseudomonoid $X$ equipped with an invertible $2$-morphism $\gamma: \mu \Rightarrow \mu \circ \rho_{X,X}$
satisfying the \emph{symmetry equation }
\stringdiagramlabel{
\begin{scope}
    \begin{scope}
        \multiplication{0}{0}
        \nattrans{2.5}{0}{\gamma}
    \end{scope}
    \begin{scope}[shift={(5,0)}]
        \swap{0}{1}
        \multiplication{0}{-1}
        \nattrans{2.5}{0}{\gamma}
        \highlight{-1.2}{-2.2}{1.2}{0.2}
    \end{scope}
    \begin{scope}[shift={(10,0)}]
        \swap{0}{2}
        \swap{0}{0}
        \multiplication{0}{-2}
    \end{scope}
    \enclose{-1.5}{-3.5}{11.5}{3.5}
\end{scope}
\equals{13}{0}
\begin{scope}[shift={(16,0)}]
    \begin{scope}
        \multiplication{0}{0}
        \nattrans{3}{0}{v_{X,X}^{-1}}
        \highlight{-1.2}{0.8}{1.2}{1.2}
    \end{scope}
    \begin{scope}[shift={(6,0)}]
        \swap{0}{2}
        \swap{0}{0}
        \multiplication{0}{-2}
    \end{scope}
    \enclose{-1.5}{-3.5}{7.5}{3.5}
\end{scope}
}{eqn:symmetry}
and the \emph{hexagon equation}
\stringdiagramlabel{
\begin{scope}
    \begin{scope} 
        \multiplication{0}{1}
        \identity{2}{1}
        \multiplication{1}{-1}
        \nattrans{3.5}{0}{a}
    \end{scope}
    \begin{scope}[shift={(5,0)}]
        \identity{0}{1}
        \multiplication{2}{1}
        \multiplication{1}{-1}
        \nattrans{3.5}{0}{\gamma}
        \highlight{-.2}{-2.2}{2.2}{0.2}
    \end{scope}
    \begin{scope}[shift={(10,0)}]
       \identity{0}{2}
        \multiplication{2}{2}
        \swap{1}{0}
        \multiplication{1}{-2}
        \nattrans{5}{0}{\rho_{\id_X,\mu}}
        \highlight{-.2}{-1.2}{3.2}{3.2}
    \end{scope}
    \begin{scope}[shift={(18,0)}]
        \swaptwoa{.5}{2}
        \multiplication{0}{0}
        \identity{2}{0}
        \multiplication{1}{-2}
        \nattrans{3}{0}{a}
        \highlight{-1.2}{-3.2}{2.2}{1.2}
    \end{scope}
    \begin{scope}[shift={(22,0)}]
        \swaptwob{.5}{2}
        \identity{0}{0}
        \multiplication{2}{0}
        \multiplication{1}{-2}
    \end{scope}
\enclose{-1.5}{-3.5}{25.5}{3.5}
\end{scope}
\equals{10}{-8}
\begin{scope}[shift={(13,-9)}]
    \begin{scope} 
        \multiplication{0}{1}
        \identity{2}{1}
        \multiplication{1}{-1}
        \nattrans{3.5}{0}{\gamma}
        \highlight{-1.2}{-.2}{1.2}{2.2}
    \end{scope}
    \begin{scope}[shift={(6,0)}]
        \swap{0}{2}
        \identity{2}{2}
        \multiplication{0}{0}
        \identity{2}{0}
        \multiplication{1}{-2}
        \nattrans{3.5}{0}{a}
        \highlight{-1.2}{-3.2}{2.2}{1.2}
    \end{scope}
    \begin{scope}[shift={(11,0)}]
        \slimswap{0.5}{2}
        \identity{3}{2}
        \multiplication{2}{0}
        \identity{0}{0}
        \multiplication{1}{-2}
        \nattrans{4.5}{0}{\gamma}
        \highlight{.8}{-1.2}{3.2}{1.2}
    \end{scope}
    \begin{scope}[shift={(17,0)}]
        \slimswap{0.5}{3}
        \identity{3}{3}
        \identity{0}{1}
        \swap{2}{1}
        \multiplication{2}{-1}
        \identity{0}{-1}
        \multiplication{1}{-3}
        \nattrans{5.5}{0}{R_{X|XX}}
        \highlight{-.2}{-.2}{3.2}{4.2}
    \end{scope}
    \begin{scope}[shift={(25,0)}]
        \swaptwob{.5}{2}
        \identity{0}{0}
        \multiplication{2}{0}
        \multiplication{1}{-2}
    \end{scope}
\enclose{-1.5}{-4.5}{28.5}{4.5}
\end{scope}
}{eqn:hexagon}

\begin{example}
    Following up on Example \ref{ex:monoid1}, the notion of commutative pseudomonoid reduces to that of commutative monoid object in the case where $\C$ is a symmetric monoidal category. 
\end{example}

\begin{example}
    Following up on Example \ref{ex:moncat1}, a commutative pseudomonoid in the symmetric monoidal bicategory $\Cat$ is the same thing as a symmetric monoidal category.
\end{example}

\section{\texorpdfstring{$2$}{2}-Segal sets and pseudomonoids in \texorpdfstring{$\Span$}{Span}}
\label{sec:2segalpseudo}

The notion of a \emph{$2$-Segal set} (and more generally, \emph{$2$-Segal space}) was introduced by Dyckerhoff and Kapranov \cite{Dyckerhoff-Kapranov:Higher} as a generalization of the Segal conditions that hold when a simplicial set is the nerve of a category (also see \cite{BOORS} for a nice exposition). The equivalent concept of \emph{decomposition space} was independently introduced by G\'alvez-Carillo, Kock, and Tonks \cite{GKT1}. 

In \cite{Stern:span}, it was shown that there is an $\infty$-categorical equivalence between $2$-Segal spaces and coherent algebra structures in $\infty$-categories of spans. It follows from this result that there is a correspondence between $2$-Segal sets and pseudomonoids in $\Span$.

In Section \ref{sec:2Segal}, we review the definition and some properties of $2$-Segal sets. In Section \ref{sec:faceanddegen}, we describe a simple but useful graphical calculus for visualizing face and degeneracy maps on $2$-Segal sets. In Section \ref{sec:pseudospan}, we sketch the correspondence between $2$-Segal sets and pseudomonoids in $\Span$. Because the proof of the correspondence in \cite{Stern:span} is $\infty$-categorical, it is difficult for an uninitiated reader to follow; we hope the more elementary description here, using an approach via dual graphs that originates in \cite{dyckerhoff-kapranov:crossed}, will be useful to such readers.

In Section \ref{sec:examples2segal}, we describe some examples of $2$-Segal sets. These are all examples that have appeared elsewhere in the literature (e.g.\ \cite{Dyckerhoff-Kapranov:Higher, GKT1, BOORS}), but we present them here so that we may return to them later when considering Frobenius and commutative structures.

In Section \ref{sec:nolift}, we briefly discuss the relationship between pseudomonoids in $\Span$ and monoids in the $1$-category $\Span_1$, which were studied in \cite{ContrerasMehtaSpan}. In particular, in Example \ref{ex:nolift}, we describe an infinite family of monoids in $\Span_1$ that do not admit lifts to pseudomonoid structures.

\subsection{\texorpdfstring{$2$}{2}-Segal sets}\label{sec:2Segal}

Let $X_\bullet$ be a simplicial set with face maps $d_i^n: X_n \to X_{n-1}$ and degeneracy maps $s_i^n: X_n \to X_{n+1}$. As is usual in the literature, we will sometimes drop the upper index when it is clear from context. 

For $n\geq 2$, any triangulation $\mathcal{T}$ of the regular $(n+1)$-gon (with vertices labeled from $0$ to $n$ clockwise) induces a map 
\begin{equation}\label{eqn:triangulation}
    \hat{\mathcal{T}} : X_n \to X_2 \times_{X_1} \cdots \times_{X_1} X_2,
\end{equation}
defined as follows. Each triangle in $\mathcal{T}$ is specified by its three vertices $v_0 < v_1 < v_2$, inducing a map $[2] = \{0,1,2\} \to [n] = \{0,\dots,n\}$ and thus a map $X_n \to X_2$, forming a component of $\hat{\mathcal{T}}$. The compatibility conditions between the components on the right of \eqref{eqn:triangulation} arise from the edges shared by triangles.

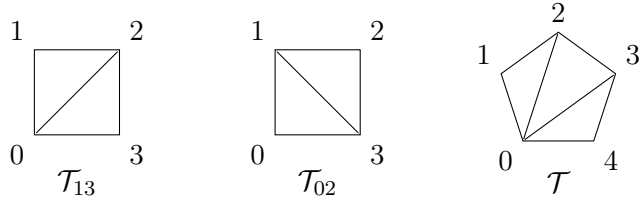
\begin{figure}[th]
\begin{center}
\begin{tikzpicture}[scale=0.8]
    \begin{scope}
        \draw (-135:1) node[vertex] (0) {} node[anchor=north east] {0}
        -- (-45:1) node[vertex] (3) {} node[anchor=north west] {3}
        -- (45:1) node[vertex] (2) {} node[anchor=south west] {2}
        -- (135:1) node[vertex] (1) {} node[anchor=south east] {1}
        -- cycle;
        \draw (0) -- (2);
        \node at (0,-1.5) {$\mathcal{T}_{13}$};
    \end{scope}
    \begin{scope}[shift={(4,0)}]
        \draw (-135:1) node[vertex] (0) {} node[anchor=north east] {0}
        -- (-45:1) node[vertex] (3) {} node[anchor=north west] {3}
        -- (45:1) node[vertex] (2) {} node[anchor=south west] {2}
        -- (135:1) node[vertex] (1) {} node[anchor=south east] {1}
        -- cycle;
        \draw (3) -- (1);
        \node at (0,-1.5) {$\mathcal{T}_{02}$};
    \end{scope}
    \begin{scope}[shift={(8,0)}]
        \draw (-126:1) node[vertex] (0) {} node[anchor=north east] {0}
        -- (-54: 1) node[vertex] (4) {} node[anchor=north west] {4}
        -- (18:1) node[vertex] (3) {} node[anchor=south west] {3}
        -- (90:1) node[vertex] (2) {} node[anchor=south] {2}
        -- (162:1) node[vertex] (1) {} node[anchor=south east] {1}
        -- cycle;
        \draw (0) -- (2);
        \draw (0) -- (3);
        \node at (0,-1.5) {$\mathcal{T}$};
    \end{scope}
\end{tikzpicture}
\end{center}
    \caption{The triangulations of the square and a triangulation of the pentagon.}
    \label{fig:square}
\end{figure}

The first nontrivial cases arise from the two triangulations of the square in Figure \ref{fig:square}. The associated maps $\hat{\mathcal{T}}_{13}$ and $\hat{\mathcal{T}}_{02}$ can be described in terms of the face maps as follows.
\begin{equation}
\label{eqn:taco}
\begin{split}
    \hat{\mathcal{T}}_{13}: X_3 &\to X_2 \bitimes{d_1}{d_2} X_2, \qquad   \hat{\mathcal{T}}_{02}: X_3 \to X_2 \bitimes{d_0}{d_1} X_2, \\
    \psi &\mapsto (d_3 \psi, d_1 \psi),  \qquad \qquad \quad \; \psi \mapsto (d_2 \psi, d_0 \psi).
\end{split}
\end{equation}
There are five triangulations of the pentagon, and one of them is shown in Figure \ref{fig:square}. The associated map $\hat{\mathcal{T}}$ is given by
\begin{equation}
\label{eqn:2segalx4}
    \begin{split}
    \hat{\mathcal{T}}: X_4 &\to X_2 \bitimes{d_1}{d_2} X_2 \bitimes{d_1}{d_2} X_2, \\
    \psi &\mapsto (d_{34} \psi, d_{14} \psi, d_{12} \psi),
    \end{split}
\end{equation}
where $d_{ij}: X_4 \to X_2$ arises from the monotonic injection $[2] \to [4]$ that skips $i$ and $j$. If we order the indices such that $i < j$, then we may explicitly write $d_{ij}$ as the composition of face maps $d_{ij} = d_i d_j$.

\begin{defn}\label{def:2segal}
    A simplicial set $X_\bullet$ is \emph{$2$-Segal} if, for every triangulation $\mathcal{T}$, the map $\hat{\mathcal{T}}$ is an isomorphism.
\end{defn}

The following example helps to illustrate the relationship between the $2$-Segal conditions and associativity. This relationship is described in more detail in Section \ref{sec:pseudospan}.
\begin{example}
    Let $G$ be a group, and let $X_\bullet$ be the nerve of $G$. Then the maps associated to the triangulations of the square are given by
    \begin{align*}
        \hat{\mathcal{T}}_{13} &: (g_1,g_2,g_3) \mapsto ((g_1 g_2, g_3), (g_1,g_2)), & \hat{\mathcal{T}}_{02} &: (g_1,g_2,g_3) \mapsto ((g_2, g_3), (g_1,g_2 g_3)).
    \end{align*}
    We can think of these two maps as corresponding to the two different bracketings for the triple product $g_1 g_2 g_3$; specifically, $\hat{\mathcal{T}}_{13}$ corresponds to $(g_1 g_2) g_3$, and $\hat{\mathcal{T}}_{02}$ corresponds to $g_1(g_2 g_3)$. The fact that $\hat{\mathcal{T}}_{13}$ and $\hat{\mathcal{T}}_{02}$ are isomorphisms means, roughly, that $X_3$ plays the role of an associator, connecting the two bracketings. 

    Similarly, the triangulations of the pentagon correspond to bracketings of the $4$-fold product $g_1 g_2 g_3 g_4$. For example, the triangulation of the pentagon in Figure \ref{fig:square} corresponds to $((g_1 g_2) g_3) g_4$. We note that the relationship between triangulations of the $(n+1)$-gon and bracketings of $n$-fold products is well-known. See, for instance \cite{LEE1989551} and \cite{HaimanAssoc}.
\end{example}

It is known \cite{FGKW:unital} that $2$-Segal sets satisfy the \emph{unitality conditions} that
\begin{equation}\label{eqn:unitality}
\begin{tikzcd}
    X_1 \arrow[r, "d_0"] \arrow[d, "s_1"] & X_0 \arrow[d, "s_0"] \\
    X_2 \arrow[r, "d_0"] & X_1
\end{tikzcd}
\hspace{1.5in}
\begin{tikzcd}
    X_1 \arrow[r, "d_1"] \arrow[d, "s_0"] & X_0 \arrow[d, "s_0"] \\
    X_2 \arrow[r, "d_2"] & X_1
\end{tikzcd}
\end{equation}
be pullback diagrams. There are also higher unitality conditions that are automatically satisfied (see \cite{BOORS, Dyckerhoff-Kapranov:Higher}). In particular, we will later use the fact that
\begin{equation}\label{eqn:higherunitality}
\begin{tikzcd}
    X_2 \arrow[r, "d_{02}"] \arrow[d, "s_1"] & X_0 \arrow[d, "s_0"] \\
    X_3 \arrow[r, "d_{03}"] & X_1
\end{tikzcd}    
\end{equation}
is a pullback diagram.

\subsection{More on \texorpdfstring{$2$}{2}-Segal sets: face, degeneracy, and edge maps}\label{sec:faceanddegen}

A nice feature of $2$-Segal sets is that high-dimensional simplices can be understood in terms of polygons, which can be drawn in two dimensions. This point of view leads to a graphical calculus for face and degeneracy maps.

Let $X_\bullet$ be a $2$-Segal set. To visualize the face map $d_i: X_n \to X_{n+1}$, choose a triangulation $\mathcal{T}$ of the $(n+1)$-gon that includes the triangle with vertex $i$ and its two adjacent vertices. Any element $\psi \in X_n$ can then be identified with its image under the corresponding isomorphism $\hat{\mathcal{T}}$ in \eqref{eqn:triangulation}. If we delete the component of $\psi$ corresponding to the triangle at vertex $i$, the remaining components form a triangulated $n$-gon whose vertices can be relabeled, giving us $d_i \psi$.

For example, the triangulation of the pentagon in Figure \ref{fig:square} gives an identification of any $\psi \in X_4$ with a triplet $(\xi,\xi',\xi'') \in X_2 \times_{X_1} X_2 \times_{X_1} X_2$, as in \eqref{eqn:2segalx4}. To get $d_1^4 \psi$, we delete $\xi$, which is the $2$-simplex corresponding to the triangle at vertex $1$. The remaining components $(\xi',\xi'')$ correspond, under the map $\hat{\mathcal{T}}_{13}$ in \eqref{eqn:taco}, to $d_1 \psi$. This process is illustrated in Figure \ref{fig:facemaps}.

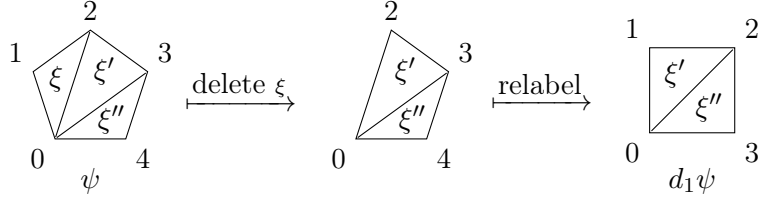
\begin{figure}[th]
\begin{center}
\begin{tikzpicture}[scale=0.8]
    \begin{scope}
        \draw (-126:1) node[vertex] (0) {} node[anchor=north east] {0}
        -- (-54: 1) node[vertex] (4) {} node[anchor=north west] {4}
        -- (18:1) node[vertex] (3) {} node[anchor=south west] {3}
        -- (90:1) node[vertex] (2) {} node[anchor=south] {2}
        -- (162:1) node[vertex] (1) {} node[anchor=south east] {1}
        -- cycle;
        \draw (0) -- (2);
        \draw (0) -- (3);
        \node at (162:.6) {$\xi$};
        \node at (54:.4) {$\xi'$};
        \node at (-54:.6) {$\xi''$};
        \node at (0,-1.5) {$\psi$};
    \end{scope}
    \node at (2.5,0) {$\xmapsto{\mbox{delete } \xi}$};
    \begin{scope}[shift={(5,0)}]
        \draw (-126:1) node[vertex] (0) {} node[anchor=north east] {0}
        -- (-54: 1) node[vertex] (4) {} node[anchor=north west] {4}
        -- (18:1) node[vertex] (3) {} node[anchor=south west] {3}
        -- (90:1) node[vertex] (2) {} node[anchor=south] {2}
        -- cycle;
        \draw (0) -- (3);
        \node at (54:.4) {$\xi'$};
        \node at (-54:.6) {$\xi''$};
    \end{scope}
    \node at (7.5,0) {$\xmapsto{\mbox{relabel}}$};
    \begin{scope}[shift={(10,0)}]
        \draw (-135:1) node[vertex] (0) {} node[anchor=north east] {0}
        -- (-45:1) node[vertex] (3) {} node[anchor=north west] {3}
        -- (45:1) node[vertex] (2) {} node[anchor=south west] {2}
        -- (135:1) node[vertex] (1) {} node[anchor=south east] {1}
        -- cycle;
        \draw (0) -- (2);
        \node at (135:.4) {$\xi'$};
        \node at (-45:.4) {$\xi''$};
        \node at (0,-1.5) {$d_1 \psi$};
    \end{scope}
\end{tikzpicture}
\end{center}
    \caption{Graphical calculus for face maps.}
    \label{fig:facemaps}
\end{figure}

Any $0 \leq i < j \leq n$ induces a map $[1] \to [n]$ and thus a map $X_n \to X_1$, which can be visualized as picking out the edge from $i$ to $j$ in the $(n+1)$-gon. Such an edge will only appear explicitly in the graphical calculus if the edge is part of the chosen triangulation. However, the exterior edges of the polygon are special, because they appear in \emph{every} triangulation. It will be useful to introduce notation for the maps associated to the exterior edges.

For $1 \leq i \leq n$, let $e_i^n: X_n \to X_1$ denote the map that picks out the edge from $i-1$ to $i$. The map that picks out the remaining edge, from $0$ to $n$, is denoted as $e_\out^n$; the reason for this notation will become clear in Section \ref{sec:pseudospan}.

To describe degeneracy maps, we first note that any $x \in X_1$ induces two degenerate $2$-simplices, $s_0 x$ and $s_1 x$. Each degenerate $2$-simplex has one degenerate edge, and the other two edges are $x$. 

Given a higher-dimensional simplex $\psi \in X_n$, $n \geq 2$, viewed as an $(n+1)$-gon, we can obtain $s_i \psi$ by attaching a degenerate $2$-simplex so that the resulting $(n+2)$-gon has degenerate edge $e_{i+1}$. For example, the images of $\psi \in X_2$ under degeneracy maps are shown in Figure \ref{fig:degeneracy}. Each has two presentations that are equivalent, in the sense that they relate to each other under the isomorphisms \eqref{eqn:taco} and thus represent the same element of $X_3$.

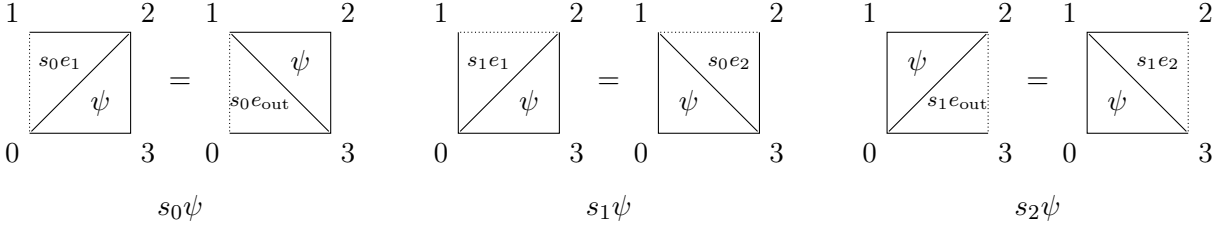
\begin{figure}[th]
\begin{center}
\begin{tikzpicture}[scale=0.95]

    \begin{scope}[shift={(0,0)}]
    \begin{scope}
        \draw (-135:1) node[vertex] (0) {} node[anchor=north east] {0}
        -- (-45:1) node[vertex] (3) {} node[anchor=north west] {3} 
        -- (45:1) node[vertex] (2) {} node[anchor=south west] {2}
        -- (135:1) node[vertex] (1) {} node[anchor=south east] {1};
        \draw[densely dotted] (0) -- (1);
        \draw (0) -- (2);
        \node at (135:.4) {${\scriptstyle s_0 e_1}$};
        \node at (-45:.4) {$\psi$};
    \end{scope}
    \equals{1.4}{0}
    \node at (1.4,-1.75) {$s_0 \psi$};    
    \begin{scope}[shift={(2.8,0)}]
        \draw (-135:1) node[vertex] (0) {} node[anchor=north east] {0}
        -- (-45:1) node[vertex] (3) {} node[anchor=north west] {3} 
        -- (45:1) node[vertex] (2) {} node[anchor=south west] {2}
        -- (135:1) node[vertex] (1) {} node[anchor=south east] {1};
        \draw[densely dotted] (0) -- (1);
        \draw (1) -- (3);
        \node at (45:.4) {$\psi$};
        \node at (-135:.4) {${\scriptstyle s_0 e_\out}$};
    \end{scope}
    \end{scope}
    
    \begin{scope}[shift={(6,0)}]
    \begin{scope}
        \draw (135:1) node[vertex] (1) {} node[anchor=south east] {1}
        -- (-135:1) node[vertex] (0) {} node[anchor=north east] {0}
        -- (-45:1) node[vertex] (3) {} node[anchor=north west] {3}
        -- (45:1) node[vertex] (2) {} node[anchor=south west] {2};
        \draw[densely dotted] (1) -- (2);
        \draw (0) -- (2);
        \node at (135:.4) {${\scriptstyle s_1 e_1}$};
        \node at (-45:.4) {$\psi$};
    \end{scope} 
    \equals{1.4}{0}
    \node at (1.4,-1.75) {$s_1 \psi$};    
    \begin{scope}[shift={(2.8,0)}]
        \draw (135:1) node[vertex] (1) {} node[anchor=south east] {1}
        -- (-135:1) node[vertex] (0) {} node[anchor=north east] {0}
        -- (-45:1) node[vertex] (3) {} node[anchor=north west] {3}
        -- (45:1) node[vertex] (2) {} node[anchor=south west] {2};
        \draw[densely dotted] (1) -- (2);
        \draw (1) -- (3);
        \node at (45:.4) {${\scriptstyle s_0 e_2}$};
        \node at (-135:.4) {$\psi$};
    \end{scope}
    \end{scope}
    \begin{scope}[shift={(12,0)}]
    \begin{scope}
        \draw (45:1) node[vertex] (2) {} node[anchor=south west] {2}
        -- (135:1) node[vertex] (1) {} node[anchor=south east] {1}
        -- (-135:1) node[vertex] (0) {} node[anchor=north east] {0}
        -- (-45:1) node[vertex] (3) {} node[anchor=north west] {3};
        \draw[densely dotted] (2) -- (3);
        \draw (0) -- (2);
        \node at (135:.4) {$\psi$};
        \node at (-45:.4) {${\scriptstyle s_1 e_\out}$};
    \end{scope} 
    \equals{1.4}{0}
    \node at (1.4,-1.75) {$s_2 \psi$};    
    \begin{scope}[shift={(2.8,0)}]
        \draw (45:1) node[vertex] (2) {} node[anchor=south west] {2}
        -- (135:1) node[vertex] (1) {} node[anchor=south east] {1}
        -- (-135:1) node[vertex] (0) {} node[anchor=north east] {0}
        -- (-45:1) node[vertex] (3) {} node[anchor=north west] {3};
        \draw[densely dotted] (2) -- (3);
        \draw (1) -- (3);
        \node at (45:.4) {${\scriptstyle s_1 e_2}$};
        \node at (-135:.4) {$\psi$};
    \end{scope}
    \end{scope}
    
\end{tikzpicture}
\end{center}
    \caption{Graphical calculus for degeneracy maps. Dotted edges are degenerate. Here, $s_i e_j$ is shorthand for $s_i e_j \psi$.}
    \label{fig:degeneracy}
\end{figure}

\subsection{Pseudomonoids in \texorpdfstring{$\Span$}{Span}}
\label{sec:pseudospan}

Instead of only considering triangulations of the regular $(n+1)$-gon, one can more generally consider subdivisions. More precisely, if $X_\bullet$ is a simplicial set, then any subdivision $\mathcal{T}$ of the $(n+1)$-gon into $(k_i+1)$-gons induces a map $\widehat{\mathcal{T}}$ from $X_n$ to an iterated pullback of the $X_{k_i}$ over copies of $X_1$. For example, in the case of the subdivision in Figure \ref{subfig:hexplain}, the associated map is
\[
\begin{tikzcd}
	X_5 \arrow[r] & X_2\times_{X_1}X_4.
\end{tikzcd}
\]
The $2$-Segal maps \eqref{eqn:triangulation} form the special case where the subdivision is a triangulation. The following result, which characterizes the 2-Segal conditions in terms of subdivisions, appears in \cite{Dyckerhoff-Kapranov:Higher}.

\begin{prop}[\cite{Dyckerhoff-Kapranov:Higher}, Proposition 2.3.2]\label{prop:subdivision}
	A simplicial set $X_\bullet$ satisfies the 2-Segal conditions if and only if, for every subdivision $\mathcal{T}$ of $P_{n+1}$ into polygons, the associated map $\widehat{\mathcal{T}}$ is an isomorphism. 
\end{prop}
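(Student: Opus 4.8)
The plan is to prove both implications separately, with the nontrivial direction being that the 2-Segal conditions imply that $\widehat{\mathcal{T}}$ is an isomorphism for \emph{every} polygonal subdivision. The reverse implication is immediate: triangulations are themselves polygonal subdivisions (into triangles), so if $\widehat{\mathcal{T}}$ is an isomorphism for every subdivision, it is in particular an isomorphism for every triangulation, which is exactly Definition \ref{def:2segal}.

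For the forward implication, I would argue by induction on the total number of diagonals that a subdivision $\mathcal{T}$ of $P_{n+1}$ uses. The base case is the subdivision with no diagonals, namely the single $(n+1)$-gon itself, for which $\widehat{\mathcal{T}}$ is the identity map $X_n \to X_n$ and is trivially an isomorphism. For the inductive step, given a subdivision $\mathcal{T}$ using at least one diagonal, I would choose a single diagonal $\delta$ of $\mathcal{T}$, splitting $P_{n+1}$ into two sub-polygons $P'$ and $P''$ along $\delta$; the remaining diagonals of $\mathcal{T}$ restrict to subdivisions $\mathcal{T}'$ of $P'$ and $\mathcal{T}''$ of $P''$, each using strictly fewer diagonals. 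The crossing diagonal $\delta$ realizes $\widehat{\mathcal{T}}$ as a composite of the single-diagonal map $X_n \to X_{k'} \times_{X_1} X_{k''}$ (where $k',k''$ are the dimensions associated to $P',P''$) followed by the product of the maps $\widehat{\mathcal{T}'}$ and $\widehat{\mathcal{T}''}$ applied fiberwise over $X_1$. By the inductive hypothesis $\widehat{\mathcal{T}'}$ and $\widehat{\mathcal{T}''}$ are isomorphisms, and the fibered product of isomorphisms over a common base is again an isomorphism, so it suffices to show the single-diagonal map is an isomorphism.

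The heart of the proof, and the main obstacle, is therefore showing that the map associated to a \emph{single} diagonal is an isomorphism using only the triangulation form of the 2-Segal conditions. The idea is that a single-diagonal subdivision can be refined to a full triangulation $\mathcal{S}$ of $P_{n+1}$ that contains $\delta$ as one of its edges, and also refines $\mathcal{T}'$ and $\mathcal{T}''$ to triangulations $\mathcal{S}', \mathcal{S}''$ of the two halves. Then $\widehat{\mathcal{S}} = (\widehat{\mathcal{S}'} \times_{X_1} \widehat{\mathcal{S}''}) \circ (\text{single-diagonal map})$, and $\widehat{\mathcal{S}}$ is an isomorphism by the 2-Segal hypothesis. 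Since $\widehat{\mathcal{S}'}$ and $\widehat{\mathcal{S}''}$ are also isomorphisms by the 2-Segal hypothesis (they are triangulations of smaller polygons, handled by a prior induction on $n$ or by the same diagonal induction), their fibered product is an isomorphism, and hence the single-diagonal map is an isomorphism as a factor of an isomorphism precomposed with an inverse. The delicate point to get right is the bookkeeping of the shared-edge compatibility conditions: one must verify that the iterated fiber products are taken over the correct copies of $X_1$ corresponding to the edges shared across $\delta$, so that the factorization of $\widehat{\mathcal{S}}$ through the single-diagonal map is literally valid on the nose and not merely up to reindexing. Once this associativity-of-pullbacks bookkeeping is handled, the induction closes and the proposition follows.
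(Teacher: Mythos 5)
Your argument is correct, and the key structural point to note is that the paper itself gives no proof of this statement: it is quoted verbatim from Dyckerhoff--Kapranov (their Proposition 2.3.2), so there is no in-paper argument to compare against. Your proof is essentially the standard refinement argument from that source: the converse is trivial because triangulations are subdivisions, and for the forward direction one refines a subdivision $\mathcal{T}$ to a triangulation $\mathcal{S}$, factors $\widehat{\mathcal{S}}$ as a fiber product of triangulation maps for the individual polygons composed with $\widehat{\mathcal{T}}$, and concludes by two-out-of-three since a fiber product of bijections over a common base is a bijection. The one comment worth making is that your induction on the number of diagonals, with its reduction to the single-diagonal case, is more scaffolding than the argument needs: you can skip the induction entirely by refining $\mathcal{T}$ to a triangulation $\mathcal{S}$ in one step, writing $\widehat{\mathcal{S}} = \bigl(\textstyle\prod_{X_1}\widehat{\mathcal{S}_i}\bigr)\circ \widehat{\mathcal{T}}$ where $\mathcal{S}_i$ is the induced triangulation of the $i$-th polygon of $\mathcal{T}$, and inverting the left factor, since each $\widehat{\mathcal{S}_i}$ and $\widehat{\mathcal{S}}$ is an isomorphism by the $2$-Segal hypothesis. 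The ``bookkeeping'' you flag --- that the internal edges of $\mathcal{S}$ are exactly $\delta$ together with those of $\mathcal{S}'$ and $\mathcal{S}''$, and that the component maps factor through the vertex inclusions $[k_Q]\to[k']\to[n]$ by functoriality of $X_\bullet$ --- is genuinely all that needs checking, and it goes through as you describe.
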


\begin{figure}[th]
\begin{center}

\begin{subfigure}{0.225\textwidth}
\begin{center}
\begin{tikzpicture}[scale=0.75]
    \begin{scope}
        \draw (240:1) node[vertex] (0) {} node[anchor=north east] {0}
        -- (300:1) node[vertex] (5) {} node[anchor=north west] {5}
        -- (0:1) node[vertex] (4) {} node[anchor=west] {4}
        -- (60:1) node[vertex] (3) {} node[anchor=south west] {3}
        -- (120:1) node[vertex] (2) {} node[anchor=south east] {2}
        -- (180:1) node[vertex] (1) {} node[anchor=east] {1}        
        -- cycle;
        \draw (0) -- (2);
    \end{scope}
    \node at (0,-2.3) {};
\end{tikzpicture}
\caption{}
\label{subfig:hexplain}
\end{center}
\end{subfigure}
\begin{subfigure}{0.225\textwidth}
\begin{center}
\begin{tikzpicture}[scale=0.75]
    \begin{scope}
        \draw (240:1) node[vertex] (0) {} node[anchor=north east] {0}
        -- (300:1) node[vertex] (5) {} node[anchor=north west] {5}
        -- (0:1) node[vertex] (4) {} node[anchor=west] {4}
        -- (60:1) node[vertex] (3) {} node[anchor=south west] {3}
        -- (120:1) node[vertex] (2) {} node[anchor=south east] {2}
        -- (180:1) node[vertex] (1) {} node[anchor=east] {1}        
        -- cycle;
        \draw (0) -- (2);
    \end{scope}
    \begin{scope}[color=MidnightBlue]
        \node[vertex,draw,circle,fill] (A) at (0:.1) {};
        \node[vertex,draw,circle,fill] (B) at (180:.7) {};
        \node[vertex] (1) at (210:1.5) {};
        \node[vertex] (out) at (270:1.5) {};
        \node[vertex] (5) at (330:1.5) {};
        \node[vertex] (4) at (30:1.5) {};
        \node[vertex] (3) at (90:1.5) {};
        \node[vertex] (2) at (150:1.5) {};

        \draw (A) -- (B);
        \draw (A) -- (out) node[anchor=north] {out};
        \draw (A) -- (3) node[anchor=south] {3};
        \draw (A) -- (4) node[anchor=south west] {4};
        \draw (A) -- (5) node[anchor=north west] {5};
        \draw (B) -- (1) node[anchor=north east] {1};
        \draw (B) -- (2) node[anchor=south east] {2};
    \end{scope}
    \node at (0,-2.35) {};
\end{tikzpicture}
\caption{}
\label{subfig:hex}
\end{center}
\end{subfigure}
\begin{subfigure}{0.225\textwidth}
\begin{center}
\begin{tikzpicture}
    \begin{scope}[color=MidnightBlue, scale=1.1]
        \node[vertex,draw,circle,fill] (A) at (1,-.33) {};
        \node[vertex,draw,circle,fill] (B) at (.25,.33) {};
        \node[vertex] (out) at (1,-1) {};
        \node[vertex] (1) at (0,1) {};
        \node[vertex] (2) at (.5,1) {};
        \node[vertex] (3) at (1,1) {};
        \node[vertex] (4) at (1.5,1) {};
        \node[vertex] (5) at (2,1) {};

        \draw (A) -- (B);
        \draw (A) -- (out) node[anchor=north] {out};
        \draw (A) -- (3) node[anchor=south] {3};
        \draw (A) -- (4) node[anchor=south] {4};
        \draw (A) -- (5) node[anchor=south] {5};
        \draw (B) -- (1) node[anchor=south] {1};
        \draw (B) -- (2) node[anchor=south] {2};
    \end{scope}
\end{tikzpicture}
\end{center}
\caption{}
\label{subfig:tree}
\end{subfigure}
\begin{subfigure}{0.225\textwidth}
\begin{center}
\begin{tikzcd}
    (X_1)^5 \\
    X_2 \times_{X_1} X_4 \arrow{u} \arrow{d} \\
    X_1
\end{tikzcd}
\end{center}
\caption{}
\label{subfig:span}
\end{subfigure}
\end{center}
    \caption{(a) A subdivision of the hexagon. (b) The subdivided hexagon with the dual graph superimposed. (c) The dual graph redrawn as a labeled rooted tree. (d) The associated span.}
    \label{fig:subhex}
\end{figure}
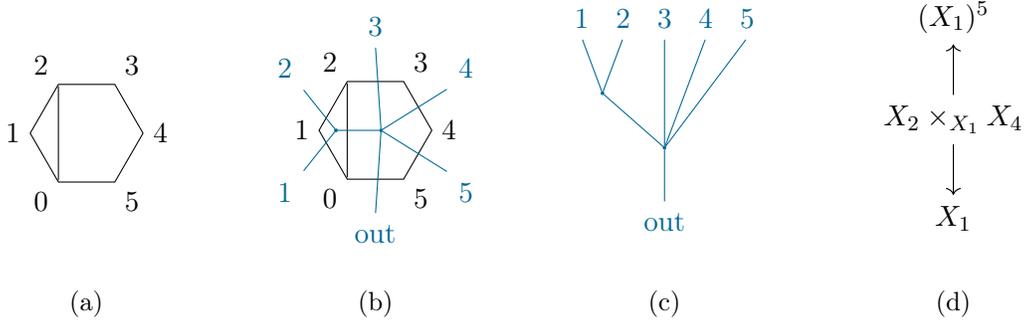

The relationship between the $2$-Segal conditions and associativity is elucidated by considering dual graphs. This perspective appears in \cite{dyckerhoff-kapranov:crossed}.
The dual graph of a subdivided $(n+1)$-gon is a labeled planar rooted tree, where the root is associated to the edge $e_\out$, and the $i$th leaf is associated to the edge $e_i$. For example, Figure \ref{subfig:hex} shows the dual graph superimposed on a subdivided hexagon, and Figure \ref{subfig:tree} shows the same graph drawn as a labeled rooted tree.

Let $X_\bullet$ be a (for now, not necessarily $2$-Segal) simplicial set. Then we can produce a span from a labeled rooted tree by assigning the span
\begin{equation}\label{eqn:xnspan}
   \begin{tikzcd}
   	(X_1)^n  &[2em] X_n\arrow[l, swap, "{(e_1, \dots, e_n)}"]\arrow[r, "e_{\out}"] & X_1 
   \end{tikzcd}
\end{equation} 
to each $(n+1)$-valent vertex. In particular, a trivalent vertex is sent to the span
\begin{equation}\label{eqn:mu}
\begin{tikzcd}
	X_1 \times X_1 &[2em] \arrow[l,"{(d_2,d_0)}"'] X_2 \arrow[r,"d_1"] & X_1,
\end{tikzcd}
\end{equation}
which defines a multiplication morphism $\mu$ in $\Span$. In this way, we can view a labeled planar rooted tree with $n$ leaves as being the string diagram for a span from $(X_1)^n$ to $X_1$.

%By considering higher-order vertices, we will see the role that the $2$-Segal conditions play in constructing an associator that satisfies the {pentagon equation} (\ref{eqn:pentagon}).

The span associated to the unsubdivided $(n+1)$-gon is \eqref{eqn:xnspan}. A key observation is that the map $\widehat{\mathcal{T}}$ in Proposition \ref{prop:subdivision} is a morphism of spans from \eqref{eqn:xnspan} to the span associated to the subdivided $(n+1)$-gon.  Applying this map component-wise allows us to associate a map of spans to any refinement of a subdivision.

The upshot of all this is that any simplicial set $X_\bullet$ induces, for all $n \geq 2$, a ``$2$-Segal functor'' from the poset category of subdivided $(n+1)$-gons to the category $\Hom_{\Span}((X_1)^n, X_1)$. Then we can reinterpret Proposition \ref{prop:subdivision} as saying that $X_\bullet$ is $2$-Segal if and only if the image of this functor consists of isomorphisms of spans.

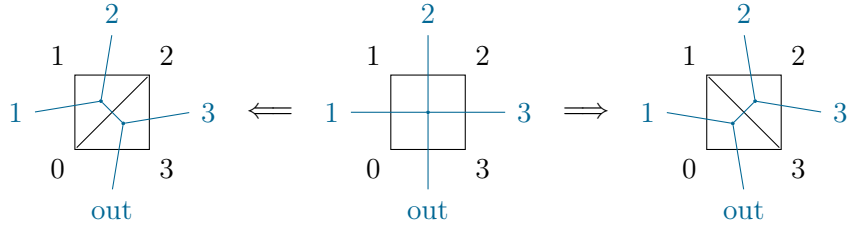
\begin{figure}[th]
\begin{center}
\begin{tikzpicture}[scale=0.7]
    \begin{scope}
        \draw (-135:1) node[vertex] (0) {} node[anchor=north east] {0}
        -- (-45:1) node[vertex] (3) {} node[anchor=north west] {3}
        -- (45:1) node[vertex] (2) {} node[anchor=south west] {2}
        -- (135:1) node[vertex] (1) {} node[anchor=south east] {1}
        -- cycle;
        \draw (0) -- (2);
    \end{scope}
    \begin{scope}[color=MidnightBlue]
        \node[vertex,draw,circle,fill] (B) at (135:.3) {};
        \node[vertex,draw,circle,fill] (A) at (315:.3) {};
        \node[vertex] (1) at (180:1.5) {};
        \node[vertex] (out) at (270:1.5) {};
        \node[vertex] (3) at (0:1.5) {};
        \node[vertex] (2) at (90:1.5) {};

        \draw (A) -- (B);
        \draw (A) -- (out) node[anchor=north] {out};
        \draw (A) -- (3) node[anchor=west] {3};
        \draw (B) -- (1) node[anchor=east] {1};
        \draw (B) -- (2) node[anchor=south] {2};
    \end{scope}
    \node at (3,0) {$\Longleftarrow$};
    \begin{scope}[shift={(6,0)}]
        \draw (-135:1) node[vertex] (0) {} node[anchor=north east] {0}
        -- (-45:1) node[vertex] (3) {} node[anchor=north west] {3}
        -- (45:1) node[vertex] (2) {} node[anchor=south west] {2}
        -- (135:1) node[vertex] (1) {} node[anchor=south east] {1}
        -- cycle;
    \end{scope}
        \begin{scope}[color=MidnightBlue, shift={(6,0)}]
        \node[vertex,draw,circle,fill] (A) at (0:0) {};
        \node[vertex] (1) at (180:1.5) {};
        \node[vertex] (out) at (270:1.5) {};
        \node[vertex] (3) at (0:1.5) {};
        \node[vertex] (2) at (90:1.5) {};

        \draw (A) -- (out) node[anchor=north] {out};
        \draw (A) -- (1) node[anchor=east] {1};
        \draw (A) -- (3) node[anchor=west] {3};
        \draw (A) -- (2) node[anchor=south] {2};
    \end{scope}
    \node at (9,0) {$\Longrightarrow$};
    \begin{scope}[shift={(12,0)}]
        \draw (-135:1) node[vertex] (0) {} node[anchor=north east] {0}
        -- (-45:1) node[vertex] (3) {} node[anchor=north west] {3}
        -- (45:1) node[vertex] (2) {} node[anchor=south west] {2}
        -- (135:1) node[vertex] (1) {} node[anchor=south east] {1}
        -- cycle;
        \draw (3) -- (1);
    \end{scope}
        \begin{scope}[color=MidnightBlue, shift={(12,0)}]
        \node[vertex,draw,circle,fill] (A) at (225:.3) {};
        \node[vertex,draw,circle,fill] (B) at (45:.3) {};
        \node[vertex] (1) at (180:1.5) {};
        \node[vertex] (out) at (270:1.5) {};
        \node[vertex] (3) at (0:1.5) {};
        \node[vertex] (2) at (90:1.5) {};

        \draw (A) -- (B);
        \draw (A) -- (out) node[anchor=north] {out};
        \draw (A) -- (1) node[anchor=east] {1};
        \draw (B) -- (3) node[anchor=west] {3};
        \draw (B) -- (2) node[anchor=south] {2};
    \end{scope}
\end{tikzpicture}
\end{center}
    \caption{The poset category of subdivided squares with dual graphs superimposed. This poset can be viewed as a cellular model of the associahedron $K_3$.}
    \label{fig:k3}
\end{figure}

We can now use these $2$-Segal functors to show that, if $X_\bullet$ is a $2$-Segal set, then the span \eqref{eqn:mu} is the multiplication for a pseudomonoid in $\Span$. To do this, we need to define an associator $a$ that satisfies the pentagon equation \ref{eqn:pentagon}, a unit morphism $\eta$, and unitors $r, \ell$ that satisfy the triangle equation \ref{eqn:triangle}.

For $n=3$, the poset of subdivided squares is a cellular model of the associahedron $K_3$, which relates the two different ways of multiplying three inputs with a binary operation; see Figure \ref{fig:k3}.  In this case, the image of the $2$-Segal functor is the following diagram in $\Hom_{\Span}((X_1)^3,X_1)$, consisting of the $2$-Segal maps in \eqref{eqn:taco}:
\stringdiagram{
\begin{scope} 
\begin{scope}
\multiplication{0}{1}
\identity{2}{1}
\multiplication{1}{-1}
\end{scope}
\leftnattrans{4}{0}{\hat{\mathcal{T}}_{13}}
\begin{scope}[shift={(7,0)}]
\tripleprod{0}{0}    
\end{scope}
\nattrans{10}{0}{\hat{\mathcal{T}}_{02}}
\begin{scope}[shift={(12,0)}]
\identity{0}{1}
\multiplication{2}{1}
\multiplication{1}{-1}    
\end{scope}
\end{scope}
}
If $X_\bullet$ is $2$-Segal (so these maps are isomorphisms), then we can define an associator $a = \hat{\mathcal{T}}_{02} \circ \hat{\mathcal{T}}_{13}^{-1}$.

For $n=4$, the poset of subdivided pentagons is a cellular model of the associahedron $K_4$, which relates the five different ways of multiplying four inputs with a binary operation; see Figure \ref{fig:assoc4}. If $X_\bullet$ is $2$-Segal, then the commutativity of the associated diagram in $\Hom_{\Span}((X_1)^4,X_1)$ immediately implies that the associator $a$ satisfies the pentagon equation.

\begin{figure}[htb!]
\begin{center}
    \begin{tikzpicture}[scale=0.7]
            \begin{scope}
        \draw (-126:1) node[vertex] (0) {} 
        -- (-54: 1) node[vertex] (4) {} 
        -- (18:1) node[vertex] (3) {} 
        -- (90:1) node[vertex] (2) {} 
        -- (162:1) node[vertex] (1) {} 
        -- cycle;
    \end{scope}
    \begin{scope}[shift={(54:5)}]
        \draw (-126:1) node[vertex] (0) {} 
        -- (-54: 1) node[vertex] (4) {} 
        -- (18:1) node[vertex] (3) {} 
        -- (90:1) node[vertex] (2) {} 
        -- (162:1) node[vertex] (1) {} 
        -- cycle;
        \draw (0) -- (2);
    \end{scope}
        \begin{scope}[shift={(90:6)}]
        \draw (-126:1) node[vertex] (0) {} 
        -- (-54: 1) node[vertex] (4) {} 
        -- (18:1) node[vertex] (3) {} 
        -- (90:1) node[vertex] (2) {} 
        -- (162:1) node[vertex] (1) {} 
        -- cycle;
        \draw (0) -- (2);
        \draw (0) -- (3);
    \end{scope}
        \begin{scope}[shift={(126:5)}]
        \draw (-126:1) node[vertex] (0) {} 
        -- (-54: 1) node[vertex] (4) {} 
        -- (18:1) node[vertex] (3) {} 
        -- (90:1) node[vertex] (2) {} 
        -- (162:1) node[vertex] (1) {} 
        -- cycle;
        \draw (0) -- (3);
    \end{scope}
        \begin{scope}[shift={(162:6)}]
        \draw (-126:1) node[vertex] (0) {} 
        -- (-54: 1) node[vertex] (4) {} 
        -- (18:1) node[vertex] (3) {} 
        -- (90:1) node[vertex] (2) {} 
        -- (162:1) node[vertex] (1) {} 
        -- cycle;
        \draw (0) -- (3);
        \draw (1) -- (3);
    \end{scope}
        \begin{scope}[shift={(198:5)}]
        \draw (-126:1) node[vertex] (0) {} 
        -- (-54: 1) node[vertex] (4) {} 
        -- (18:1) node[vertex] (3) {} 
        -- (90:1) node[vertex] (2) {} 
        -- (162:1) node[vertex] (1) {} 
        -- cycle;
        \draw (1) -- (3);
    \end{scope}
         \begin{scope}[shift={(234:6)}]
        \draw (-126:1) node[vertex] (0) {} 
        -- (-54: 1) node[vertex] (4) {} 
        -- (18:1) node[vertex] (3) {} 
        -- (90:1) node[vertex] (2) {} 
        -- (162:1) node[vertex] (1) {} 
        -- cycle;
        \draw (1) -- (3);
        \draw (1) -- (4);
    \end{scope}
        \begin{scope}[shift={(270:5)}]
        \draw (-126:1) node[vertex] (0) {} 
        -- (-54: 1) node[vertex] (4) {} 
        -- (18:1) node[vertex] (3) {} 
        -- (90:1) node[vertex] (2) {} 
        -- (162:1) node[vertex] (1) {} 
        -- cycle;
        \draw (1) -- (4);
    \end{scope}
        \begin{scope}[shift={(306:6)}]
        \draw (-126:1) node[vertex] (0) {} 
        -- (-54: 1) node[vertex] (4) {} 
        -- (18:1) node[vertex] (3) {} 
        -- (90:1) node[vertex] (2) {} 
        -- (162:1) node[vertex] (1) {} 
        -- cycle;
        \draw (1) -- (4);
        \draw (2) -- (4);
    \end{scope}
        \begin{scope}[shift={(342:5)}]
        \draw (-126:1) node[vertex] (0) {} 
        -- (-54: 1) node[vertex] (4) {} 
        -- (18:1) node[vertex] (3) {} 
        -- (90:1) node[vertex] (2) {} 
        -- (162:1) node[vertex] (1) {} 
        -- cycle;
        \draw (2) -- (4);
    \end{scope}
        \begin{scope}[shift={(18:6)}]
        \draw (-126:1) node[vertex] (0) {} 
        -- (-54: 1) node[vertex] (4) {} 
        -- (18:1) node[vertex] (3) {} 
        -- (90:1) node[vertex] (2) {} 
        -- (162:1) node[vertex] (1) {} 
        -- cycle;
        \draw (2) -- (4);
        \draw (0) -- (2);
    \end{scope}
    \node[rotate=342] at (342:2.5) {$\Longrightarrow$};
    \node[rotate=54] at (54:2.5) {$\Longrightarrow$};
    \node[rotate=126] at (126:2.5) {$\Longrightarrow$};
    \node[rotate=198] at (198:2.5) {$\Longrightarrow$};
    \node[rotate=270] at (270:2.5) {$\Longrightarrow$};
    \node[rotate=72] at (0:5.5) {$\Longrightarrow$};    
    \node[rotate=144] at (72:5.5) {$\Longrightarrow$};
    \node[rotate=216] at (144:5.5) {$\Longrightarrow$};
    \node[rotate=288] at (216:5.5) {$\Longrightarrow$};
    \node[rotate=0] at (288:5.5) {$\Longrightarrow$};
    \node[rotate=324] at (36:5.5) {$\Longrightarrow$};    
    \node[rotate=36] at (108:5.5) {$\Longrightarrow$};
    \node[rotate=108] at (180:5.5) {$\Longrightarrow$};
    \node[rotate=180] at (252:5.5) {$\Longrightarrow$};
    \node[rotate=252] at (324:5.5) {$\Longrightarrow$};    
    \end{tikzpicture}
\end{center}

\caption{The poset of subdivided pentagons is a cellular model of the associahedron $K_4$.}
\label{fig:assoc4} 
\end{figure}
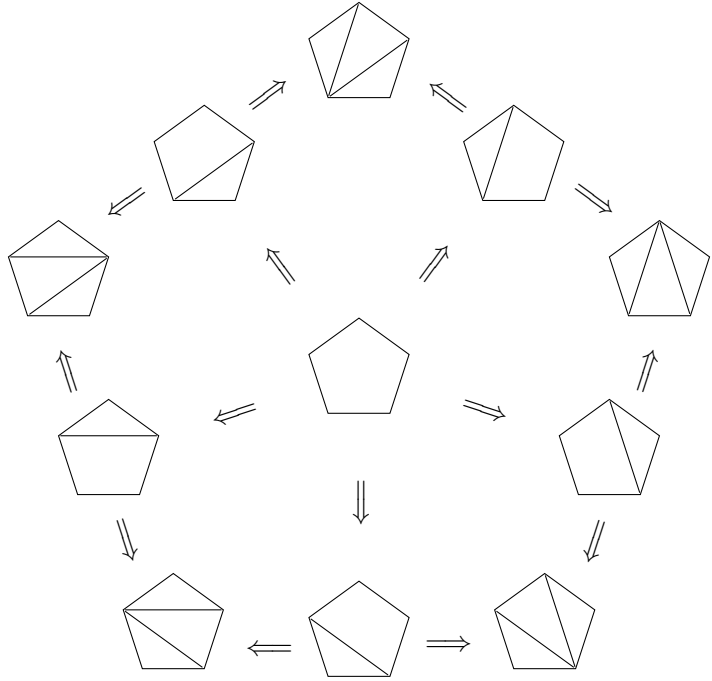

The unit morphism $\eta$ associated to $X_\bullet$ is defined by the span
\[ 
\begin{tikzcd}
	\pt & \arrow[l] X_0 \arrow[r,"s_0"] & X_1.
\end{tikzcd}
\]
The composition $\mu \circ (\eta \times \id_{X_1})$ can be canonically identified with the span
\[
\begin{tikzcd}[row sep=0em]
	X_1 & X_0 \bitimes{s_0}{d_2}X_2\arrow[r] \arrow[l] & X_1,\\
	d_0\xi & (u,\xi)\arrow[r,mapsto]\arrow[l,mapsto] & d_1\xi.
\end{tikzcd}
\]
The map $X_1 \xrightarrow{(d_1,s_0)} X_0 \bitimes{s_0}{d_2} X_2$ is a map of spans $\id_{X_1} \Rightarrow \mu \circ (\eta \times \id_{X_1})$. If $X_\bullet$ is $2$-Segal, then it follows from the unitality property \eqref{eqn:unitality} that this map is an isomorphism. The inverse map then gives the unitor $2$-morphism $\ell: \mu \circ (\eta \times \id_{X_1}) \Rightarrow \id_{X_1}$. The other unitor $2$-morphism $r$ is obtained in a similar way.

To verify the {triangle equation} \ref{eqn:triangle}, we consider the diagram
\stringdiagram{
\identity{-1}{1}
\unit{0}{1}
\identity{1}{1}
\tripleprod{0}{-1}
}
for which the associated span is canonically isomorphic to
\begin{equation}\label{eqn:s1}
\begin{tikzcd}[row sep=0em]
	(X_1)^2 & X_0 \bitimes{s_0}{e_2}X_3\arrow[r] \arrow[l] & X_1,\\
	(e_1\xi,e_3\xi) & (u,\xi)\arrow[r,mapsto]\arrow[l,mapsto] & e_\out \xi.
\end{tikzcd}
\end{equation}
The map $S: X_2 \xrightarrow{(d_{02},s_1)} X_0 \bitimes{s_0}{e_2} X_3$ is a map of spans from $\mu$ to \eqref{eqn:s1}. If $X_\bullet$ is $2$-Segal, then it follows from \eqref{eqn:higherunitality} that $S$ is an isomorphism. We then have the following diagram in $\Hom_{\Span}((X_1)^2,X_1)$:
\stringdiagram{
\begin{scope} 
\begin{scope}
\identity{-1}{2}
\unit{1}{2}
\identity{2}{2}
\multiplication{0}{0}
\identity{2}{0}
\multiplication{1}{-2}
\end{scope}
\leftnattrans{4}{0}{\hat{\mathcal{T}}_{13}}
\begin{scope}[shift={(7,0)}]
\identity{-1}{1}
\unit{0}{1}
\identity{1}{1}
\tripleprod{0}{-1}   
\end{scope}
\nattrans{10}{0}{\hat{\mathcal{T}}_{02}}
\begin{scope}[shift={(12,0)}]
\identity{-0}{2}
\unit{1}{2}
\identity{3}{2}
\identity{0}{0}
\multiplication{2}{0}
\multiplication{1}{-2}    
\end{scope}
\upnattrans{7.4}{-3.5}{S}
\begin{scope}[shift={(7,-6)}]
\multiplication{0}{0}    
\end{scope}
\end{scope}
}
Now we observe that the maps in the triangle equation can be expressed in terms of the maps in the above diagram:
\begin{align*}
    a &= \hat{\mathcal{T}}_{02} \hat{\mathcal{T}}_{13}^{-1},&
    r &= S^{-1} \hat{\mathcal{T}}_{13}^{-1},&
    \ell &= S^{-1} \hat{\mathcal{T}}_{02}^{-1}.
\end{align*}
The triangle equation $r = \ell \circ a$ follows.

We will only briefly sketch the other direction of the equivalence, as later sections primarily make use of the direction that we have described. Given a pseudomonoid in $\Span$, let us suggestively denote the underlying object by $Y_1$, the unit by 
\[
\begin{tikzcd}
	\pt & Y_0 \arrow[l]\arrow[r,"s_0"] & Y_1 
\end{tikzcd}
\]
and the multiplication by 
\[
\begin{tikzcd}
	Y_1\times Y_1 & Y_2 \arrow[r,"d_1"] \arrow[l,"{(d_2,d_0)}"'] & Y_1.
\end{tikzcd}
\]
By iterated composition of the multiplication with itself, we can obtain an $n$-fold multiplication 
\[
\begin{tikzcd}
	Y_1^{\times n} & Y_n \arrow[l] \arrow[r] & Y_1. 
\end{tikzcd}
\] 
We can then construct a simplicial set whose set of $n$-simplices is $Y_n$. The degeneracy maps are induced by composition with the unit, and the face maps are induced by restriction to subproducts, sometimes requiring the use of the associator. The $2$-Segal conditions hold as a result of associativity: for $n>2$, $Y_n$ is constructed as an iterated composite of 2-fold multiplications, and by associativity, the canonical 2-morphism (associator) between any two such composites must be an isomorphism.

\subsection{Examples}\label{sec:examples2segal}

In this section we review some examples of $2$-Segal sets. We will return to these examples later, in Sections \ref{subsec:sym_egs} and \ref{subsec:para_egs}, where we will consider when the corresponding pseudomonoids in $\Span$ admit Frobenius and/or commutative structures.

\begin{example}[Categories]\label{ex:categories}
If $\C$ is a small category, then the nerve $N\C$ of $\C$ is a $2$-Segal set; see \cite[Proposition 2.3.4]{Dyckerhoff-Kapranov:Higher} for a proof.
\end{example}

\begin{example}[Partial monoids]\label{ex:partialmonoids}
The notion of partial monoid and its nerve construction appeared in \cite{Segal:Conf}, and it was shown in \cite{BOORS} that the nerve of a partial monoid is $2$-Segal. We briefly review the construction here.

A \emph{partial monoid} is a set $M$ equipped with a partially-defined multiplication map $M_2 \to M$, $(x,x') \mapsto x \cdot x'$, for some subset $M_2 \subset M \times M$, satisfying the following conditions. In the following, all equations should be taken to mean that either both sides are undefined or both sides are defined and equal.
\begin{enumerate}
    \item (Associativity) For all $x,x',x'' \in M$, 
    \[ (x \cdot x') \cdot x'' = x \cdot (x' \cdot x''),\]
    \item (Unitality) There exists $1 \in M$ such that $1 \cdot x = x \cdot 1 = x$ for all $x \in M$.
\end{enumerate}

The nerve $N_\bullet M$ of a partial monoid $M$ has $N_0 M = \pt$ and, for $n \geq 1$, $N_n M$ consists of fully composable $n$-tuples of elements of $M$. The face and degeneracy maps are given by the same formulas as those for the nerve of a monoid. We refer to \cite{BOORS} for details as well as a proof that the $2$-Segal conditions hold.
\end{example}

\begin{example}[Twisted cyclic nerves]\label{ex:twistedcyclic}
Let $\C$ be a small category equipped with an endofunctor $F: \C \to \C$. The twisted cyclic nerve $N^F_\bullet \C$ is defined as follows. The set $N^F_n \C$ consists of composable $(n+1)$-tuples $(f_n, \dots, f_0)$ of morphisms in $\C$ such that the composition $F(f_0) f_n$ is defined. For $0<i\leq n$, the face map $d_i^n$ is given by
\[ d_i^n(f_n, \dots, f_0) = (f_n, \dots, f_i f_{i-1}, \dots, f_0),\]
and $d_0^n$ is given by
\[ d_0^n(f_n, \dots, f_0) = (F(f_0) f_n, f_{n-1}, \dots, f_1).\]
The degeneracy map $s_i^n$ inserts an identity morphism after $f_i$. It is shown in \cite[Theorem 3.2.3]{Dyckerhoff-Kapranov:Higher} that $N^F_\bullet \C$ is $2$-Segal.
\end{example}

\begin{example}[Twisted inertia groupoids]\label{ex:twistedinertia}
Let $G$ be a group equipped with an endomorphism $F: G \to G$. Then, as a special case of Example \ref{ex:twistedcyclic}, we obtain the $2$-Segal set $N^F_\bullet G$. This can be seen to be isomorphic to the nerve of a groupoid for which the set of objects is $G$, and where the set of morphisms is $G \times G$. Specifically, $(g,h) \in G \times G$ is a morphism from $h$ to $F(g) h g^{-1}$. In the case where $F = \id$, this construction produces the \emph{inertia groupoid} of $G$, which encodes the conjugation action of $G$ on itself. More generally, we could view it as a \emph{twisted inertia groupoid}, where the conjugation action is twisted by $F$.
\end{example}

\begin{example}[Buildings]\label{ex:buildings}
Let $X$ be a poset equipped with an order-preserving map $F: X \to X$. Then, as another special case of Example \ref{ex:twistedcyclic}, we obtain the $2$-Segal set $N^F_\bullet X$, where
\[ N_n^F X = \{ (x_0, \dots, x_n) \suchthat x_0 \leq \dots \leq x_n \leq F(x_0) \},\]
where the face map $d_i^n$ deletes $x_i$, and where the degeneracy map $s_i^n$ duplicates $x_i$. In this case, $N^F_\bullet X$ is called the \emph{building} of $X$. See \cite[Proposition 3.1.4]{Dyckerhoff-Kapranov:Higher} for a direct proof that buildings are $2$-Segal.
\end{example}

\begin{example}[Graph partitions]\label{ex:graphpartitions}
Let $G$ be a graph. In \cite[Example 2.3]{BOORS}, an associated $2$-Segal set $X(G)_\bullet$ is given via a modification of the construction of \cite[Example 1.1.5]{GKT_comb}. Here, the elements of $X(G)_n$ are of the form $(H;S_1, \dots, S_n)$, where $H$ is a subgraph of $G$ and $(S_1, \dots, S_n)$ is a partition of the set of vertices of $H$. We omit a detailed description for now, since we will see in Section \ref{subsec:sym_egs} that this example fits more naturally in the more structured framework of $\Gamma$-sets.
\end{example}

\subsection{Relation to monoids in the \texorpdfstring{$1$}{1}-category of spans}\label{sec:nolift}

Recall that $\Span_1$ denotes the homotopy $1$-category of spans of sets. In \cite{ContrerasMehtaSpan}, monoids in $\Span_1$ were similarly described in terms of simplicial sets, yielding conditions that are weaker than the $2$-Segal conditions. The simplicial sets considered there are $2$-truncated, meaning they only include data up to $X_2$. One can then form the \emph{taco spaces} $X_2 \bitimes{d_1}{d_2} X_2$ and $X_2 \bitimes{d_0}{d_1} X_2$ that correspond to the two triangulations of the square (see \eqref{eqn:taco}). Via the edge maps $e_1$, $e_2$, $e_3$, and $e_\out$, we can view the taco spaces as spans from $(X_1)^3$ to $X_1$.

In this $1$-categorical setting, the associativity condition is the existence of an associator, i.e.\ an isomorphism of spans $X_2 \bitimes{d_1}{d_2} X_2 \Rightarrow X_2 \bitimes{d_0}{d_1} X_2$. We stress that one only needs existence; the associator is not part of the data, and there is no coherence condition.

In general, pseudomonoids in a bicategory descend to monoids in the homotopy $1$-category, and thus a pseudomonoid in $\Span$ descends to a pseudomonoid in $\Span_1$. On the simplicial set side, we see this relationship in the fact that, if $X_\bullet$ is a $2$-Segal set, then we have a natural associator $a = \hat{\mathcal{T}}_{02} \circ \hat{\mathcal{T}}_{13}^{-1}$, which witnesses the $1$-categorical associativity condition. 

In the other direction, given a monoid in $\Span_1$, one could ask whether there exists a lift to a pseudomonoid in $\Span$. In terms of simplicial sets, this amounts to asking whether it is possible to choose an associator $X_2 \bitimes{d_1}{d_2} X_2 \Rightarrow X_2 \bitimes{d_0}{d_1} X_2$ such that the pentagon equation is satisfied.

It turns out that the existence of such a lift can be somewhat restrictive, as the following example shows. 

\begin{example}[Monoids that do not lift to pseudomonoids]\label{ex:nolift}
Consider the $2$-truncated simplicial set given by $X_0 = \{0\}$, $X_1 = \{0,1\}$, and $X_2 = \{(0,0),(1,0),(0,1)\} \sqcup A$, where $A$ is an arbitrary set. The face maps are given by
\begin{align*}
    d_0(k, \ell) &= \ell, & d_1(k,\ell) &= k+\ell, & d_2(k, \ell) &= k,
\end{align*}
for $(k,\ell) \in \{(0,0),(1,0),(0,1)\}$, and
\begin{align*}
    d_0(a) &= 1, & d_1(a) &= 0, & d_2(a) &= 1,
\end{align*}
for $a \in A$. The degeneracy maps are given by $s_0^0(0) = 0$ and
\begin{align*}
    s_0^1(k) &= (0,k), & s_1^1(k) &= (k,0),
\end{align*}
for $k \in X_1$. Intuitively, we can think of $X_\bullet$ as being similar to the ($2$-truncation of the) nerve of the group $\mathbb{Z}_2$, except that the $2$-simplices that represent the sum $1+1=0$ are labeled by elements of $A$. We observe that, when $|A| = 1$, $X_\bullet$ is the nerve of $\mathbb{Z}_2$, and when $A = \emptyset$, $X_\bullet$ is the nerve of a partial monoid.

We can partition the taco spaces as
\begin{align*}
    X_2 \bitimes{d_1}{d_2} X_2 &= \bigsqcup M_{ijk}, & X_2 \bitimes{d_0}{d_1} X_2 &= \bigsqcup M'_{ijk},
\end{align*}
where the indices $i,j,k \in X_1$ indicate the images under the edge maps $e_1,e_2,e_3$. Note that, in this example, $e_\out$ is uniquely determined by the other three edges. An associator is equivalent to a collection of isomorphisms $M_{ijk} \cong M'_{ijk}$.

In the cases $ijk = 000, 100, 010, 001$, the sets $M_{ijk}$ and $M'_{ijk}$ are both singletons, so there is a unique isomorphism between them. For example, 
\begin{align*}
M_{100} &= \{((1,0),(1,0))\}, & M'_{100} &= \{((1,0),(0,0))\}. 
\end{align*}
In the other cases, $M_{ijk}$ and $M'_{ijk}$ can both be identified with $A$. Specifically, 
\begin{align*}
M_{110} &= \{(a, (0,0)) \suchthat a \in A\}, & M'_{110} &= \{(a, (1,0)) \suchthat a \in A\}, \\
M_{101} &= \{((1,0), a) \suchthat a \in A\}, & M'_{101} &= \{(a,(0,1)) \suchthat a \in A\}, \\
M_{011} &= \{((0,1),a) \suchthat a \in A\}, & M'_{011} &= \{((0,0),a) \suchthat a \in A \}, \\
M_{111} &= \{(a,(0,1)) \suchthat a \in A\}, & M'_{111} &= \{((1,0),a) \suchthat a \in A\}.
\end{align*}
The existence of an associator is clear. This example is a special case of \cite[Example 3.8]{ContrerasMehtaSpan} and \cite[Example 2.8]{KenneyPare}.

In \cite{KenneyPare}, it is further claimed that, because the associator can be canonically chosen, it must satisfy the pentagon equation. However, it turns out that this is not true. The set $X_2 \bitimes {d_1}{d_2} X_2 \bitimes{d_1}{d_2} X_2$ corresponds to the triangulation of the pentagon in Figure \ref{fig:square} (see \eqref{eqn:2segalx4}). As with the triangulations of the square, we have a partition
\[ X_2 \bitimes {d_1}{d_2} X_2 \bitimes{d_1}{d_2} X_2 = \bigsqcup P_{ijk\ell},\]
where the indices $i,j,k,\ell \in X_1$ indicate the images under the edge maps $e_1,e_2,e_3,e_4$. It is then a straightforward check to see that
\[ P_{1111} = \{(a,(0,1),a') \suchthat a,a' \in A\}, \]
and that the automorphism of $P_{1111}$ obtained by using the canonical associator to go around the pentagon diagram is the map $(a,(0,1),a') \mapsto (a', (0,1),a)$, which is different from the identity map, except when $|A| = 0$ or $1$. More generally, for any choice of associator, the pentagon diagram gives a map of the form $(a, (0,1), a') \mapsto (\phi'(a'), (0,1), \phi(a))$, where $\phi,\phi'$ are automorphisms of $A$.

To summarize, in this example we have described an infinite family of monoids in $\Span_1$, parametrized by the set $A$, which only admit lifts to pseudomonoids in $\Span$ in the cases $|A| = 0,1$.
\end{example}

\section{Paracyclic structures and Frobenius pseudomonoids in \texorpdfstring{$\Span$}{Span}}
\label{sec:paracyclicfrobenius}

In this section, we consider Frobenius structures on pseudomonoids in $\Span$. We find that Frobenius structures correspond to \emph{paracyclic} structures on $2$-Segal sets.

In Section \ref{sec:paracycliccat}, we review the definition and basic properties of the paracyclic category $\Lambda_\infty$, which is closely related to the cyclic category of Connes. The paracyclic category has appeared in the study of cyclic homology \cites{getzler-jones, nistor} and crossed simplicial groups \cites{dyckerhoff-kapranov:crossed, fiedorowicz-loday}. 

A \emph{paracyclic set} is a functor $\Lambda_\infty^\op \to \Set$. There is a natural inclusion of the simplex category $\Delta$ into $\Lambda_\infty$, so one can think of a paracyclic set as a simplicial set with some additional structure. This additional structure is explicitly written down in Section \ref{sec:paracyclicset}. Then, in Section \ref{sec:frobeniusspan} we prove one of our main results, that a paracyclic structure on a $2$-Segal set is equivalent to a Frobenius structure on the corresponding pseudomonoid in $\Span$. We conclude the section with examples in Section \ref{subsec:para_egs}.

\subsection{The paracyclic category}\label{sec:paracycliccat}

The paracyclic category $\Lambda_\infty$ has the same objects $[n]$ as the simplex category $\Delta$. A morphism from $[m]$ to $[n]$ is defined to be an order-preserving map $f: \mathbb{Z} \to \mathbb{Z}$ such that 
\begin{equation}\label{eqn:paracyclic}
    f(i + k(m+1)) = f(i) + k(n+1)
\end{equation}
for all $k \in \mathbb{Z}$. This definition of $\Lambda_\infty$ may seem a bit mysterious, so it may help to give some intuition for how it arises. For each $n$, consider the covering map $\mathbb{Z} \to [n]$, $i \mapsto i \pmod{n+1}$. Associated to this covering map is the action of $\mathbb{Z}$ on itself by deck transformations $\varphi_k^n: \mathbb{Z} \to \mathbb{Z}$, $i \mapsto i + k(n+1)$. Condition \eqref{eqn:paracyclic} can then be interpreted as saying that $f$ is equivariant, i.e.\ $f \circ \varphi_k^m = \varphi_k^n \circ f$.

A consequence of the equivariance condition is that any morphism $f \in \Hom_{\Lambda_\infty}([m],[n])$ covers an underlying map $\tilde{f}: [m] \to [n]$. The fact that $f$ is order-preserving implies that $\tilde{f}$ preserves cyclic order, so $f \mapsto \tilde{f}$ defines a functor from $\Lambda_\infty$ to the cyclic category $\Lambda$.

Condition \eqref{eqn:paracyclic} implies that any $f \in \Hom_{\Lambda_\infty}([m],[n])$ is determined by its values on $\{0,\dots,m\}$. Conversely, any order-preserving map $f: \{0,\dots,m\} \to \mathbb{Z}$ such that $f(m) \leq f(0) + n + 1$ uniquely extends to a morphism $f \in \Hom_{\Lambda_\infty}([m],[n])$. As a result, we can identify the simplex category $\Delta$ with the subcategory of $\Lambda_\infty$ consisting of morphisms $f \in \Hom_{\Lambda_\infty}([m],[n])$ that map $\{0,\dots,m\}$ into $\{0,\dots,n\}$. In particular, we have the coface maps $\delta_i^n \in \Hom_{\Delta}([n-1],[n])$ and the codegeneracy maps $\sigma_i^n \in \Hom_{\Delta}([n+1],[n])$.

There are two special families of morphisms in $\Lambda_\infty$ that are not in $\Delta$. For each $[n]$, let $\sigma_{n+1}^n \in \Hom_{\Lambda_\infty}([n+1],[n])$ be given by $\sigma_{n+1}^n(i) = i$ for $i \in \{0,\dots,n+1\}$, and let $T^n \in \Hom_{\Lambda_\infty}([n], [n])$ be given by $T^n(i) = i+1$ for all $i$. A direct calculation shows that these morphisms are related to each other via the equation
\begin{equation}\label{eqn:sigmatau}
    \sigma_{n+1}^n \delta_0^{n+1} = T^n.
\end{equation} 
The notation is intended to suggest that $\sigma_{n+1}^n$ should be viewed as an extra codegeneracy map. The role of $T^n$ is that it is invertible and is a generator of $\Aut_{\Lambda_\infty}([n]) \cong \mathbb{Z}$.

An important fact about the paracyclic category is the following unique factorization property. The proof is usually omitted in the literature, but we include it here because it is constructive and useful for calculations.
\begin{prop}\label{prop:paracyclicfactor}
    Every morphism $f \in \Hom_{\Lambda_\infty}([m],[n])$ can be uniquely factored in the form $f = g \circ (T^m)^{-a}$, where $g \in \Hom_\Delta([m],[n])$ and $a \in \mathbb{Z}$.
\end{prop}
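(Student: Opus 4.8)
The plan is to prove existence and uniqueness separately, relying on the two characterizations of morphisms in $\Lambda_\infty$ that were established just before the statement. The key tools are that any $f \in \Hom_{\Lambda_\infty}([m],[n])$ is determined by its restriction to $\{0,\dots,m\}$, and that such a restriction extends to a morphism precisely when it is order-preserving and satisfies $f(m) \le f(0) + n + 1$. The automorphism $T^m$ acts on the domain by $i \mapsto i+1$, so precomposing with $(T^m)^{-a}$ shifts the argument: $(f \circ (T^m)^{-a})(i) = f(i-a)$.

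For existence, I would first reduce the problem to arranging that the restriction of $f$ lands in the ``fundamental domain'' $\{0,\dots,n\}$, since by the identification of $\Delta$ with the appropriate subcategory of $\Lambda_\infty$, a morphism whose restriction to $\{0,\dots,m\}$ maps into $\{0,\dots,n\}$ is exactly a morphism $g \in \Hom_\Delta([m],[n])$. The natural choice is to set $a = \lfloor f(0)/(n+1) \rfloor$, so that $g := f \circ (T^m)^{-a}$ satisfies $0 \le g(0) \le n$. I would then check that $g$ is a genuine morphism in $\Delta$: it is order-preserving (being a composite of order-preserving maps), and the paracyclic condition together with $g(m) \le g(0) + (n+1) \le n + (n+1)$ forces $g(\{0,\dots,m\}) \subseteq \{0,\dots,2n+1\}$. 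The point requiring care is that $g(m)$ might exceed $n$; here I would use that $g$ is equivariant, so $g(m) \le g(0) + n + 1$, and that $g(0) \ge 0$ combined with order-preservation pins the values into $\{0,\dots,n\}$ after the shift. This is the step I expect to be the main obstacle: verifying that the single shift by $a = \lfloor f(0)/(n+1)\rfloor$ simultaneously controls both $g(0)$ and $g(m)$ so that the entire restriction lands in $\{0,\dots,n\}$, rather than needing the values to wrap around.

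For uniqueness, suppose $g \circ (T^m)^{-a} = g' \circ (T^m)^{-b}$ with $g,g' \in \Hom_\Delta([m],[n])$ and $a,b \in \mathbb{Z}$. Rearranging gives $g = g' \circ (T^m)^{\,b-a}$, so it suffices to show that if $g, g'$ both have restrictions valued in $\{0,\dots,n\}$ and differ by precomposition with a power of $T^m$, then that power is trivial. Evaluating at $0$, the equivariance condition forces the shift $b-a$ to move $g'(0) \in \{0,\dots,n\}$ to $g(0) \in \{0,\dots,n\}$ while the argument is shifted by $(b-a)(n+1)$ at the level of the underlying map; since a nonzero shift by a multiple of $n+1$ would push values outside $\{0,\dots,n\}$ (using that the map is order-preserving and $\{0,\dots,m\} \mapsto \{0,\dots,n\}$), we conclude $b = a$ and hence $g = g'$. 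Assembling the two parts yields the claimed unique factorization.
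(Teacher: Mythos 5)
There is a genuine gap in both halves of your argument, and it stems from conflating two different shifts: precomposition with $T^m$ shifts the \emph{argument} by one unit (as you correctly note, $(f\circ (T^m)^{-a})(i)=f(i-a)$), whereas subtracting $n+1$ from the \emph{values} corresponds to precomposition with $(T^m)^{-(m+1)}$. Consequently $g(0)=f(-a)$, not $f(0)-a(n+1)$, and the choice $a=\lfloor f(0)/(n+1)\rfloor$ does not in general put $g(0)$ into $\{0,\dots,n\}$: for $m=1$, $n=0$ and $f$ determined by $f(0)=f(1)=5$, your formula gives $a=5$ and $g(0)=f(-5)=f(1)-3=2\notin\{0\}$. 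Worse, the step you yourself flag as the main obstacle really does fail: equivariance only gives $g(m)\le g(0)+n+1\le 2n+1$, which does not force $g(m)\le n$. For $m=1$, $n=0$, $f(0)=0$, $f(1)=1$, your recipe yields $a=0$ and $g=f$, with $g(0)=0$ but $g(1)=1\notin\{0\}$; the correct factorization is $g\equiv 0$ composed with a nontrivial power of $T^1$. The paper's fix is to normalize the sign-crossing point rather than the value at $0$: take $a$ minimal with $f(a)\ge 0$ and set $g=f\circ (T^m)^{a}$; then $g(0)=f(a)\ge 0$ by choice of $a$, and, crucially, $g(m)=f(a-1+(m+1))=f(a-1)+n+1\le n$ because minimality forces $f(a-1)\le -1$. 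No normalization of $g(0)$ alone can work, since a morphism with $g(0)=0$ and $g(m)=n+1$ is admissible in $\Lambda_\infty$ but not in $\Delta$. (There is also a harmless sign slip: setting $g=f\circ (T^m)^{-a}$ gives $f=g\circ (T^m)^{+a}$, not $g\circ (T^m)^{-a}$.)

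Your uniqueness argument has the same confusion. After reducing to $g=g'\circ (T^m)^{b-a}$ with both $g,g'\in\Hom_\Delta([m],[n])$, you claim that evaluating at $0$ detects a nonzero shift because ``the argument is shifted by $(b-a)(n+1)$.'' It is shifted by $b-a$, and $g(0)=g'(b-a)$ can perfectly well lie in $\{0,\dots,n\}$ when $0<b-a\le m$ (with $g'\equiv 0$ on $\{0,1\}$, $m=1$, $n=0$, one gets $g(0)=g'(1)=0$). The shift is detected at the other endpoint: for $b-a\ge 1$ one has $g(m)=g'(m+b-a)=g'(b-a-1)+n+1\ge n+1>n$, and symmetrically $g(0)=g'(b-a)<0$ for $b-a\le -1$. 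This endpoint computation is exactly what the paper's one-line uniqueness remark (``any smaller value of $a$ would have $g(0)<0$, and any larger value would have $g(m)>n$'') is encoding, and it is the piece your argument is missing.
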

\begin{proof}
    Let $a$ be the minimum value such that $f(a) \geq 0$. Setting $g = f \circ (T^m)^a$, we see that $g(0) = f(a) \geq 0$ and $g(m) = f(m+a) = f(a-1 + (m+1)) = f(a-1) + n+1 \leq n$, so $g$ maps $\{0,\dots,m\}$ to $\{0,\dots,n\}$. For uniqueness, one can check that any smaller value of $a$ would have $g(0) < 0$, and any larger value of $a$ would have $g(m) > n$.
\end{proof}
It follows from Proposition \ref{prop:paracyclicfactor} that $\Lambda_\infty$ is generated by $T^n$ (and its inverse), the coface maps $\delta_i^n$, and the codegeneracy maps $\sigma_i^n$. One can also use the factorization of Proposition \ref{prop:paracyclicfactor} to obtain the following relations which, together with the cosimplicial relations, are sufficient to completely characterize $\Lambda_\infty$.
\begin{align} \label{eqn:paracycliccoface}
    T^n \delta_i^n &= \begin{cases}
        \delta_{i+1}^n T^{n-1}, & 0 \leq i < n, \\
        \delta_0^n, & i=n,
    \end{cases}\\ \label{eqn:paracycliccodegen}
    T^n \sigma_i^n &= \begin{cases}
        \sigma_{i+1}^n T^{n+1}, & 0 \leq i < n, \\
        \sigma_0^n (T^{n+1})^2, & i=n.
    \end{cases}
\end{align}
We note that \eqref{eqn:paracycliccoface}--\eqref{eqn:paracycliccodegen} differ from other sources (e.g.\ \cite{fiedorowicz-loday, getzler-jones}) due to a difference of convention in the indexing of the cosimplicial maps.

As a special case of Proposition \ref{prop:paracyclicfactor}, we have $\sigma_{n+1}^n = \sigma_0^n T^{n+1}$. From this, one can obtain the following relations:
\begin{align}\label{eqn:paracycliclift1}
    \sigma_{n+1}^n \delta_i^{n+1} &= \begin{cases}
        T^n, & i=0,\\
        \delta_i^n \sigma_n^{n-1}, & 0<i<n+1,\\
        \id, & i=n+1,
    \end{cases}\\ \label{eqn:paracycliclift2}
    \sigma_{n+1}^n \sigma_i^{n+1} &=
        \sigma_i^n \sigma_{n+2}^{n+1}, \;\;\;\;\;\; 0 \leq i \leq n+1.
\end{align}
The interpretation of $\sigma_{n+1}^n$ as an extra codegeneracy map is justified by the fact that these relations agree with the usual cosimplicial relations, with only one exceptional rule $\sigma_{n+1}^n \delta_0^{n+1} = T^n$.

\subsection{Paracyclic sets}\label{sec:paracyclicset}

A paracyclic set is defined to be a functor $\Lambda_\infty^\op \to \Set$. Equivalently, a paracyclic set can be defined as a simplicial set $X_\bullet$, with face maps $d_i^n: X_n \to X_{n-1}$ and degeneracy maps $s_i^n: X_n \to X_{n+1}$, equipped with isomorphisms $\tau^n: X_n \to X_n$ satisfying relations dual to \eqref{eqn:paracycliccoface}--\eqref{eqn:paracycliccodegen}:
\begin{align}\label{eqn:paracyclicface}
    d_i^n \tau^n &= \begin{cases}
        \tau^{n-1} d_{i+1}^n, & i<n,\\
        d_0^n, & i=n,
    \end{cases}\\ \label{eqn:paracyclicdegen}
    s_i^n \tau^n &= \begin{cases}
        \tau^{n+1} s_{i+1}^n, & i<n,\\
        (\tau^{n+1})^2 s_0^n, & i=n.
    \end{cases}
\end{align}
Following the discussion in Section \ref{sec:paracycliccat}, a paracyclic set possesses extra degeneracy maps $s_{n+1}^n: X_n \to X_{n+1}$, which can be defined as $s_{n+1}^n = \tau^{n+1}s_0^n$, and the simplicial relations extend with the exceptional rule $d_0^{n+1} s_{n+1}^n = \tau^n$.

On the other hand, suppose that $X_\bullet$ is a simplicial set equipped with extra degeneracy maps $s_{n+1}^n$ satisfying relations dual to \eqref{eqn:paracycliclift1}--\eqref{eqn:paracycliclift2}:
\begin{align}\label{eqn:paracyclicgamma1}
    d_i^{n+1} s_{n+1}^n &= \begin{cases}
        s_n^{n-1} d_i^n, & 0 < i < n+1, \\
        \id, & i = n+1,
    \end{cases}\\ \label{eqn:paracyclicgamma2}
    s_i^{n+1} s_{n+1}^n &= s_{n+2}^{n+1} s_i^n, \;\;\;\;\;\; 0 \leq i \leq n+1.
\end{align}
Defining $\tau^n: X_n \to X_n$ by $\tau^n = d_0^{n+1} s_{n+1}^n$, the relations \eqref{eqn:paracyclicface}--\eqref{eqn:paracyclicdegen} can be deduced from \eqref{eqn:paracyclicgamma1}--\eqref{eqn:paracyclicgamma2}. If we additionally verify that the maps $\tau^n$ are isomorphisms, then we can conclude that they give a paracyclic structure on $X_\bullet$.

\subsection{Frobenius pseudomonoids in \texorpdfstring{$\Span$}{Span}}\label{sec:frobeniusspan}

This subsection is devoted to the proof of the following result.

\begin{thm}\label{thm:paracyclic}
    Let $X_\bullet$ be a $2$-Segal set. Then there is a one-to-one correspondence between paracyclic structures on $X_\bullet$ and equivalence classes of Frobenius structures on the corresponding pseudomonoid in $\Span$.
\end{thm}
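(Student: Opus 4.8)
The plan is to establish the bijection by writing down explicit maps in both directions and checking that they are mutually inverse, with the entire correspondence funneled through a single bijection $\tau^1 \colon X_1 \to X_1$ that simultaneously plays the role of a Nakayama-type automorphism of the Frobenius object and the degree-$1$ paracyclic rotation. The first step is to pin down the counit. A counit $\varepsilon$ is a span $X_1 \leftarrow E \to \pt$, i.e.\ a set $E$ over $X_1$, and composing it with the multiplication \eqref{eqn:mu} exhibits the pairing $\alpha = \varepsilon \circ \mu$ as the span $X_1 \times X_1 \xleftarrow{(d_2,d_0)} X_2 \times_{X_1} E \to \pt$, where the fiber product is taken along $d_1$. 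Since every object of $\Span$ is self-dual, biexactness of $\alpha$ is equivalent to the assertion that both legs $d_2, d_0 \colon X_2 \times_{X_1} E \to X_1$ are bijections. I would first show, using the unitality pullback squares \eqref{eqn:unitality} and \eqref{eqn:higherunitality}, that this forces the counit to be equivalent to the span $X_1 \xleftarrow{(\tau^1)^{-1} s_0} X_0 \to \pt$ for a uniquely determined bijection $\tau^1 \colon X_1 \to X_1$, and that the pairing then identifies $X_2 \times_{X_1} X_0$ with the graph $\{(x, \tau^1 x)\}$. This reduces the counit to the single datum $\tau^1$ and shows that equivalence classes of Frobenius structures inject into bijections of $X_1$ of this special form; the nerve of a group, where $\tau^1(g) = g^{-1}$ for the standard structure and $g \mapsto g^{-1} z$ for the twist by a chosen $z$, is the guiding case.

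With $\tau^1$ (and the induced vertex operator $\tau^0$, forced by the relation $\tau^0 d_1 = d_0 \tau^1$) in hand, the central step is to promote it to operators $\tau^n \colon X_n \to X_n$ for all $n$ and to verify the paracyclic relations \eqref{eqn:paracyclicface}--\eqref{eqn:paracyclicdegen}. Here the $2$-Segal hypothesis is essential: via a triangulation isomorphism $X_n \cong X_2 \times_{X_1} \cdots \times_{X_1} X_2$, I would define $\tau^n$ to be the rotation of the $(n+1)$-gon by one vertex, realized concretely by applying $\tau^1$ to the exterior edge that wraps from vertex $n$ back to vertex $0$ and transporting the remaining triangles through the $2$-Segal isomorphisms of Proposition \ref{prop:subdivision}. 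That each $\tau^n$ is a bijection follows because the reverse rotation, built the same way from $(\tau^1)^{-1}$, provides an inverse. The relations \eqref{eqn:paracyclicface}--\eqref{eqn:paracyclicdegen} then reduce, via the graphical calculus for face and degeneracy maps (Figures \ref{fig:facemaps} and \ref{fig:degeneracy}) together with the factorization of Proposition \ref{prop:paracyclicfactor}, to the degree-$1$ and degree-$2$ relations plus $2$-Segal coherence, so it suffices to check them on the generating maps $d_i$, $s_i$, and $\tau$.

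In the reverse direction, starting from a paracyclic structure $\{\tau^n\}$ I would define the counit $\varepsilon_\tau$ by the span $X_1 \xleftarrow{(\tau^1)^{-1} s_0} X_0 \to \pt$, the copairing $\beta$ by the mirror span $\pt \leftarrow X_1 \xrightarrow{(\id_{X_1}, \tau^1)} X_1 \times X_1$ (the graph of $\tau^1$), and the snake $2$-isomorphisms $z$ and $n$ from the unitality squares \eqref{eqn:unitality} and the relations \eqref{eqn:paracyclicface}. Biexactness is immediate from the computation of $\alpha$ above, so the remaining content is the two Frobenius coherence conditions. By the remark following the definition of Frobenius pseudomonoid, the copairing and the $2$-morphisms $z, n$ are unique up to coherent isomorphism, so it is enough to exhibit \emph{some} choice satisfying the axioms; these then follow from the simplicial identities, the paracyclic relations \eqref{eqn:paracyclicface}--\eqref{eqn:paracyclicdegen}, and the $2$-Segal isomorphisms, after translating each string-diagram identity into the polygon calculus of Section \ref{sec:faceanddegen}.

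Finally, the two assignments are mutually inverse: beginning with a Frobenius structure, extracting $\tau^1$ and re-forming $\varepsilon_\tau$ returns the original counit by the first step, and since Frobenius structures are classified up to equivalence by their counits this recovers the original class; beginning with a paracyclic structure, extracting the Nakayama bijection returns $\tau^1$ on the nose, and the uniqueness of the extension in the second step recovers all the higher $\tau^n$. I expect the main obstacle to lie precisely in the second and third steps, namely the higher coherence: the operator $\tau^n$ is \emph{not} determined by $\tau^1$ as a bare function of the set $X_n$, so one must argue that the paracyclic relations together with $2$-Segality force a unique consistent tower, and simultaneously that this tower is exactly what the (essentially unique) copairing and snake identities of the Frobenius structure encode. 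Concretely, the difficulty is the bookkeeping that identifies the string-diagram coherences for $z$ and $n$ and the Frobenius axioms on one side with the polygon-rotation relations \eqref{eqn:paracyclicface}--\eqref{eqn:paracyclicdegen} on the other, which is exactly where the dual-graph and graphical calculus of Section \ref{sec:faceanddegen} should do the heavy lifting.
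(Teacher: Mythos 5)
Your proposal follows essentially the same route as the paper: reduce the counit to a single automorphism $\tau^1$ of $X_1$ using biexactness of $\alpha=\varepsilon\circ\mu$ together with the unitality pullbacks, propagate to higher degrees via the $2$-Segal isomorphisms and the polygon calculus, check the paracyclic relations by reducing to low-degree cases, and close the loop using the fact that Frobenius structures are classified up to equivalence by their counits.

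The one step that is genuinely underspecified --- and which you yourself flag as the main obstacle --- is the construction of $\tau^n$ for $n\ge 2$. As stated, ``applying $\tau^1$ to the exterior edge and transporting the remaining triangles through the $2$-Segal isomorphisms'' is not a well-defined operation: modifying the edge $e_\out\psi$ by $\tau^1$ does not determine how to modify the $2$-simplex of the triangulation incident to that edge, and a rotation of the $(n+1)$-gon necessarily changes that triangle, not just its boundary edge. The paper resolves this by extracting one more piece of data from the Frobenius structure before passing to higher degrees: the pullback square in the composite $\varepsilon\circ\mu$ yields a unique map $s_2^1\colon X_1\to X_2$ with $d_2 s_2=\id$, $d_0 s_2=\tau^1$, $d_1 s_2=s_1 d_1$ (Lemma \ref{lem:paracyclicpullback}), and the higher structure is then built through the \emph{extra degeneracies}, defining $s_{n+1}^n\psi$ by gluing the $2$-simplex $s_2(e_\out\psi)$ onto the outer edge and setting $\tau^n:=d_0 s_{n+1}^n$, with invertibility of $\tau^n$ proved as a lemma rather than built into the definition. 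Your identification of $X_2\times_{X_1}X_0$ with the graph of $\tau^1$ already contains $s_2^1$ implicitly, so the gap is fillable, but it is the degree-$2$ datum, not $\tau^1$ alone, that drives the construction, and your writeup never makes that transition. Two smaller cautions: your counit leg $(\tau^1)^{-1}s_0$ is opposite to the convention $s_1^0=\tau^1 s_0^0$ forced by $s_{n+1}^n=\tau^{n+1}s_0^n$, and since the relations \eqref{eqn:paracyclicface}--\eqref{eqn:paracyclicdegen} are not invariant under $\tau\mapsto\tau^{-1}$ this sign must be tracked consistently; and in the paracyclic-to-Frobenius direction you need not exhibit the copairing and snake isomorphisms explicitly, since biexactness of the pairing is equivalent to both of its legs being bijections, which is immediate from $d_2 s_2=\id$ and $d_0 s_2=\tau^1$.
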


\subsubsection*{A paracyclic structure determines a Frobenius structure}

We begin with the following Lemma, the proof of which is formally identical to the proof of unitality in \cite{FGKW:unital}.
\begin{lem}\label{lem:extraunital}
    Let $X_\bullet$ be a paracyclic set whose underlying simplicial set is $2$-Segal. Then the diagram
\begin{equation}\label{eqn:paraunital}
    \begin{tikzcd}
    X_1 \arrow[r, "d_1"] \arrow[d, "s_2"] & X_0 \arrow[d, "s_1"] \\
    X_2 \arrow[r, "d_1"] & X_1
\end{tikzcd}
\end{equation}
is a pullback.
\end{lem}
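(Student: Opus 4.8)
The plan is to deduce the square \eqref{eqn:paraunital} from the second unitality square of \eqref{eqn:unitality} by transporting along the paracyclic isomorphisms $\tau$. The key observation is that both vertical maps in \eqref{eqn:paraunital} are \emph{extra} degeneracies: by the definition $s_{n+1}^n = \tau^{n+1} s_0^n$ we have $s_2 = s_2^1 = \tau^2 s_0^1$ and $s_1 = s_1^0 = \tau^1 s_0^0$. Since \eqref{eqn:unitality} is already known to hold for the underlying $2$-Segal set, the canonical map $F = (d_1, s_0^1) \colon X_1 \to X_0 \bitimes{s_0}{d_2} X_2$ is a bijection, and I would try to identify \eqref{eqn:paraunital} with a $\tau$-twist of this bijection.

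First I would check commutativity of \eqref{eqn:paraunital}. Using the paracyclic face relation $d_1^2 \tau^2 = \tau^1 d_2^2$ (the generic case $i<n$ of \eqref{eqn:paracyclicface}) together with the simplicial identity $d_2 s_0 = s_0 d_1$, one computes $d_1 s_2 = d_1 \tau^2 s_0^1 = \tau^1 d_2 s_0^1 = \tau^1 s_0 d_1 = s_1 d_1$, so the square commutes. Next I would construct the comparison of pullbacks. Define $\Phi \colon X_0 \bitimes{s_0}{d_2} X_2 \to X_0 \bitimes{s_1}{d_1} X_2$ by $(x, \psi) \mapsto (x, \tau^2 \psi)$. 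The same relation $d_1^2 \tau^2 = \tau^1 d_2^2$ shows that $\Phi$ carries the compatibility condition $s_0^0 x = d_2 \psi$ of the source to the compatibility condition $s_1^0 x = d_1(\tau^2 \psi)$ of the target, and since $\tau^1, \tau^2$ are isomorphisms, $\Phi$ is a bijection with inverse $(x,\psi) \mapsto (x, (\tau^2)^{-1}\psi)$.

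Finally, one checks that the canonical map $G = (d_1, s_2) \colon X_1 \to X_0 \bitimes{s_1}{d_1} X_2$ associated to \eqref{eqn:paraunital} factors as $G = \Phi \circ F$, because $\Phi(F(y)) = \Phi(d_1 y, s_0^1 y) = (d_1 y, \tau^2 s_0^1 y) = (d_1 y, s_2 y)$. As a composite of two bijections, $G$ is a bijection, which is exactly the assertion that \eqref{eqn:paraunital} is a pullback.

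The argument is short precisely because it leverages the known unitality \eqref{eqn:unitality}; the only real bookkeeping obstacle is making sure the \emph{generic} paracyclic face relation $d_i^n \tau^n = \tau^{n-1} d_{i+1}^n$ is the one in play (here $i=1<n=2$) rather than the exceptional relation $d_n^n \tau^n = d_0^n$, and that the extra degeneracies are correctly expanded via $s_{n+1}^n = \tau^{n+1} s_0^n$. An alternative, matching the remark that the proof is ``formally identical'' to that of \cite{FGKW:unital}, would be to re-run their $2$-Segal argument directly for \eqref{eqn:paraunital}; the reduction above is essentially a repackaging of that argument that isolates the single $\tau$-twist distinguishing \eqref{eqn:paraunital} from \eqref{eqn:unitality}.
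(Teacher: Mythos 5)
Your proof is correct, but it takes a different route from the paper's. The paper gives no explicit argument at all for this lemma: it simply asserts that the proof is ``formally identical to the proof of unitality in \cite{FGKW:unital}'', i.e.\ one is expected to re-run the $2$-Segal argument of that reference directly for the square \eqref{eqn:paraunital}. You instead treat the unitality pullbacks \eqref{eqn:unitality} as already established and deduce \eqref{eqn:paraunital} from the second of them by a single $\tau$-twist: writing $s_2^1=\tau^2 s_0^1$ and $s_1^0=\tau^1 s_0^0$, the relation $d_1^2\tau^2=\tau^1 d_2^2$ (the generic case of \eqref{eqn:paracyclicface}) gives both the commutativity of \eqref{eqn:paraunital} and a bijection $\Phi\colon X_0\bitimes{s_0}{d_2}X_2\to X_0\bitimes{s_1}{d_1}X_2$, $(x,\psi)\mapsto(x,\tau^2\psi)$, through which the comparison map $(d_1,s_2^1)$ factors as $\Phi\circ(d_1,s_0^1)$. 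All the identities you invoke check out (including $d_2s_0=s_0d_1$ and the well-definedness of $\Phi$ and its inverse), so the argument is complete. What your approach buys is self-containedness and brevity: it isolates exactly how \eqref{eqn:paraunital} differs from \eqref{eqn:unitality} (by one invertible twist) rather than repeating the $2$-Segal bookkeeping of \cite{FGKW:unital}; the cost is that it leans on \eqref{eqn:unitality} as an input, whereas the paper's intended argument establishes the pullback directly from the $2$-Segal conditions. Since the paper itself already cites \eqref{eqn:unitality} as known, this reliance is unobjectionable.
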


Suppose that $X_\bullet$ is a $2$-Segal paracyclic set. Then we may form the span
\[ 
\begin{tikzcd}
	X_1 & \arrow[l,"s_1"'] X_0 \arrow[r] & \pt,
\end{tikzcd}
\]
which will be the counit morphism $\varepsilon$ for a Frobenius structure on the pseudomonoid in $\Span$ constructed in Section \ref{sec:pseudospan}. Using Lemma \ref{lem:extraunital}, we find that the induced pairing $\alpha = \varepsilon \circ \mu$ is canonically isomorphic to
\[ 
\begin{tikzcd}
	(X_1)^2 & X_2 \arrow[l,"{(d_2,d_0)}"'] & X_1 \arrow[l,"s_2"'] \arrow[r,"d_1"] & X_0 \arrow[r] & \pt.
\end{tikzcd}
\]
Since both $d_2^2 s_2^1 = \id$ and $d_0^2 s_2^1 = \tau^1$ are isomorphisms, it follows that $\alpha$ is biexact, so $\varepsilon$ gives a Frobenius structure.

\subsubsection*{A Frobenius structure determines a paracyclic structure}

Suppose that $X_\bullet$ is a $2$-Segal set for which the corresponding pseudomonoid in $\Span$ is equipped with a Frobenius structure, i.e.\ a counit $\varepsilon: X_1 \to \pt$
such that $\alpha = \varepsilon \circ \mu : X_1 \times X_1 \to \pt$ is biexact. By \cite[Lemma 3.4]{Stern:span} (also see \cite[Corollary 4.2]{ContrerasMehtaSpan} for the $1$-categorical analogue, which directly extends to the present context), it follows that $\alpha$ is uniquely isomorphic to
\begin{equation}\label{eqn:alpha}
\begin{tikzcd}
X_1 \times X_1 & \arrow[l, "{(\id, \tau^1)}"'] X_1 \arrow[r] & \pt
\end{tikzcd}
\end{equation}
where $\tau^1: X_1 \to X_1$ is an automorphism.

Define $s_1^0 = \tau^1 s_0^0: X_0 \to X_1$.
\begin{lem}\label{lem:counit}
    The counit $\varepsilon$ is uniquely isomorphic to the span
    \[
    \begin{tikzcd}
        X_1 & \arrow[l, "s_1^0"'] X_0 \arrow[r] & \pt.
    \end{tikzcd}\]
\end{lem}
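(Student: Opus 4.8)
The plan is to recover the counit $\varepsilon$ from the pairing $\alpha$ by feeding the unit into one of its two inputs, exactly as one recovers the counit of a Frobenius algebra from its pairing. Since the pseudomonoid of Section \ref{sec:pseudospan} carries a left unitor $\ell : \mu \circ (\eta \otimes \id_{X_1}) \Rightarrow \id_{X_1}$, whiskering by $\varepsilon$ produces an invertible $2$-morphism
\[
\varepsilon \circ \mu \circ (\eta \otimes \id_{X_1}) \xRightarrow{\ \varepsilon \circ \ell\ } \varepsilon \circ \id_{X_1} = \varepsilon .
\]
As $\alpha = \varepsilon \circ \mu$, this exhibits a canonical isomorphism $\varepsilon \cong \alpha \circ (\eta \otimes \id_{X_1})$, so it suffices to compute the right-hand composite, and here I am free to replace $\alpha$ by its known normal form \eqref{eqn:alpha}.

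Next I would carry out this composition of spans explicitly. The morphism $\eta \otimes \id_{X_1} \colon \pt \times X_1 \to X_1 \times X_1$ has apex $X_0 \times X_1$ with right leg $(u,w) \mapsto (s_0 u, w)$, while \eqref{eqn:alpha} has apex $X_1$ with left leg $x \mapsto (x, \tau^1 x)$. Their composite is the fibre product of these two maps into $X_1 \times X_1$, namely
\[
\{(u,w,x) : s_0 u = x,\ w = \tau^1 x\},
\]
which the equations $x = s_0 u$ and $w = \tau^1 s_0 u = s_1^0 u$ identify bijectively (and canonically) with $X_0$ via $u$. Tracing the left leg back to $\pt \times X_1 \cong X_1$ sends $(u,w,x) \mapsto w = s_1^0 u$, so under this identification the source map is precisely $s_1^0 \colon X_0 \to X_1$. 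This yields the span $X_1 \xleftarrow{s_1^0} X_0 \to \pt$ claimed in the statement.

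Finally, for the word \emph{uniquely} I would observe that every isomorphism used above is forced: the unitor $\ell$ is part of the pseudomonoid data, the whiskering $\varepsilon \circ \ell$ is determined by it, and the identification of the fibre product with $X_0$ is the canonical one. The only ingredient with any freedom is the isomorphism $\alpha \cong \eqref{eqn:alpha}$, and \cite[Lemma 3.4]{Stern:span} guarantees that this one is itself unique; composing these forced isomorphisms gives a unique isomorphism $\varepsilon \cong (X_1 \xleftarrow{s_1^0} X_0 \to \pt)$. I expect the only real care to lie in the bookkeeping of the pullback computation --- keeping straight the identification $\pt \times X_1 \cong X_1$ and checking that $\tau^1$ lands in the second slot so that it combines with $s_0^0$ to produce exactly $s_1^0 = \tau^1 s_0^0$ --- whereas the uniqueness clause is essentially formal once the cited lemma is invoked.
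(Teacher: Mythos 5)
Your existence argument is correct and is in fact more self-contained than the paper's, which simply outsources this step to \cite[Lemma 4.5]{ContrerasMehtaSpan}: composing the normal form \eqref{eqn:alpha} of $\alpha$ with $\eta\otimes\id_{X_1}$ and transporting along the whiskered left unitor $\varepsilon\circ\ell$ does yield the span $X_1\xleftarrow{\;s_1^0\;}X_0\to\pt$ with $s_1^0=\tau^1 s_0^0$, and your bookkeeping of the fibre product (in particular that $\tau^1$ lands on the surviving coordinate, so that it composes with $s_0^0$) is right. What the citation-free computation buys is a transparent reason why the counit must have this form; what the paper's route buys is brevity, since the identical computation was already done in the $1$-categorical setting.

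The uniqueness clause, however, is where your argument has a genuine gap. Showing that every isomorphism appearing in your construction is canonical or ``forced'' establishes only that there is a \emph{distinguished} isomorphism $\varepsilon\cong(X_1\xleftarrow{\;s_1^0\;}X_0\to\pt)$; it does not rule out a second, different isomorphism between the same two spans. The set of such isomorphisms is a torsor over the automorphism group of the target span, and nothing in your argument addresses that group. The correct --- and very short --- argument, which is the one the paper gives, is that $s_1^0=\tau^1 s_0^0$ is injective (as $\tau^1$ is a bijection and $s_0^0$ is split by $d_0^1$). Consequently any $2$-morphism of spans from $X_1\leftarrow E\to\pt$ to $X_1\xleftarrow{\;s_1^0\;}X_0\to\pt$ is determined on the apex by the left legs, so at most one exists; equivalently, the target span has trivial automorphism group, and your distinguished isomorphism is automatically the only one. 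With that one sentence supplied, your proof is complete.
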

\begin{proof}
    The existence of the isomorphism is shown in \cite{ContrerasMehtaSpan}[Lemma 4.5]. Uniqueness follows from the fact that $s_1^0$ is injective.
\end{proof}

By the transitivity of 2-isomorphism, two Frobenius structures are equivalent if and only if their associated maps $s^0_1$ given by Lemma \ref{lem:counit} are equal.

\begin{lem}\label{lem:paracyclicpullback}
\begin{enumerate}
    \item $d_1^1 s_1^0 = \id$,
    \item There is a unique map $s_2^1: X_1 \to X_2$ such that
        \begin{itemize}
            \item $d_0^2 s_2^1 = \tau^1$,
            \item $d_1^2 s_2^1 = s_1^0 d_1^1$,
            \item $d_2^2 s_2^1 = \id$,
        \end{itemize}
    and where the diagram \eqref{eqn:paraunital} is a pullback.
\end{enumerate}
\end{lem}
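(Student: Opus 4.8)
The plan is to read off everything from the biexactness isomorphism, exploiting the fact that the composite $\alpha=\varepsilon\circ\mu$ has a pullback as its apex. Presenting $\varepsilon$ as in Lemma~\ref{lem:counit} by the span $X_1\xleftarrow{s_1^0}X_0\to\pt$ and $\mu$ by $X_1\times X_1\xleftarrow{(d_2,d_0)}X_2\xrightarrow{d_1}X_1$, the composite $\varepsilon\circ\mu$ is the span whose apex is the pullback $X_2\bitimes{d_1}{s_1^0}X_0=\{(\psi,x)\colon d_1^2\psi=s_1^0 x\}$, with left leg $(\psi,x)\mapsto(d_2\psi,d_0\psi)$ and right leg to $\pt$. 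Biexactness, together with the uniqueness of \cite[Lemma 3.4]{Stern:span}, identifies this span with \eqref{eqn:alpha}; concretely, it furnishes a bijection $\theta\colon X_1\xrightarrow{\sim}X_2\bitimes{d_1}{s_1^0}X_0$ whose composite with the left leg is $(\id,\tau^1)$. I would then \emph{define} $s_2^1:=\mathrm{pr}_{X_2}\circ\theta$, so that by construction $d_2^2 s_2^1=\id$ and $d_0^2 s_2^1=\tau^1$, giving two of the three required face conditions.

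Next I would prove (1). Fix $x\in X_0$ and set $\psi:=s_2^1 s_0^0 x\in X_2$, so that $d_2\psi=s_0^0 x$ and $d_0\psi=\tau^1 s_0^0 x=s_1^0 x$ by the definition $s_1^0=\tau^1 s_0^0$. The simplicial identity $d_0 d_2=d_1 d_0$ applied to $\psi$ gives $d_0^1 s_0^0 x=d_1^1 s_1^0 x$, and since $d_0^1 s_0^0=\id$ this reads $d_1^1 s_1^0 x=x$. As $x$ was arbitrary, $d_1^1 s_1^0=\id$, which is exactly (1). (Equivalently, the same identity applied to a general $\psi=s_2^1\phi$ yields $d_1^1\tau^1=d_0^1$.)

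For the remaining face relation in (2), the pullback condition defining $\theta$ says $d_1^2 s_2^1\phi=s_1^0 x_\phi$, where $(s_2^1\phi,x_\phi)=\theta\phi$. Applying $d_1^1$ and invoking (1) gives $x_\phi=d_1^1 d_1^2 s_2^1\phi$; using $d_1^1 d_1^2=d_1^1 d_2^2$ and $d_2^2 s_2^1=\id$ this collapses to $x_\phi=d_1^1\phi$. Hence $d_1^2 s_2^1=s_1^0 d_1^1$, which is simultaneously the third face condition and the commutativity of the square \eqref{eqn:paraunital}; that this square is a pullback is then immediate, being precisely the assertion that $\theta=(s_2^1,d_1^1)$ is a bijection onto $X_2\bitimes{d_1}{s_1^0}X_0$, which holds by biexactness. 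For uniqueness, suppose $s\colon X_1\to X_2$ satisfies the three face conditions and makes \eqref{eqn:paraunital} a pullback; then $(s\phi,d_1^1\phi)$ lies in $X_2\bitimes{d_1}{s_1^0}X_0$ and is sent by the left leg to $(d_2 s\phi,d_0 s\phi)=(\phi,\tau^1\phi)$. Since the left leg is injective (it factors as $(\id,\tau^1)\circ\theta^{-1}$), this forces $(s\phi,d_1^1\phi)=\theta\phi$, so $s=s_2^1$.

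The hard part will be purely organizational: correctly matching the apex of $\varepsilon\circ\mu$ with the pullback in \eqref{eqn:paraunital} and transcribing the leg-matching from the biexactness isomorphism into the three face equations. Once that dictionary is fixed, claim (1), the third face relation, the pullback property, and uniqueness all follow quickly from the simplicial identities and the injectivity already encoded in biexactness; no separate appeal to the $2$-Segal conditions is needed here beyond what is built into Lemma~\ref{lem:counit} and the normal form \eqref{eqn:alpha}.
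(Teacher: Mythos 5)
Your proposal is correct and follows essentially the same route as the paper: both define $s_2^1$ as the $X_2$-component of the unique identification of the composite $\varepsilon\circ\mu$ with the normal form \eqref{eqn:alpha}, prove (1) by applying $d_0d_2=d_1d_0$ to $\psi=s_2^1s_0^0x$, and then deduce $d_1^2s_2^1=s_1^0d_1^1$ from (1) together with $d_1d_1=d_1d_2$ and $d_2^2s_2^1=\id$. The only difference is that you spell out the uniqueness argument slightly more explicitly via injectivity of the left leg, which the paper absorbs into the uniqueness of the span isomorphism.
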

\begin{proof}
    Consider the equation $\alpha = \varepsilon \circ \mu$. Using \eqref{eqn:alpha} and Lemma \ref{lem:counit}, we have that there are unique maps $X_1 \to X_2$ and $X_1 \to X_0$ such that the diagram
    \begin{equation}\label{eqn:counitpairing}
        \begin{tikzcd}
            & & X_1 \arrow[dl] \arrow[dr] \arrow[ddll, "{(\id,\tau^1)}"', bend right] && \\
            & X_2 \arrow[dl, "{(d_2,d_0)}"] \arrow[dr, "d_1"] && X_0 \arrow[dl,"s_1"'] \arrow[dr] \\
            (X_1)^2 && X_1 && \pt
        \end{tikzcd}
    \end{equation}
    commutes, and where the middle square is a pullback. We define $s_2^1$ to be the map from $X_1$ to $X_2$ in \eqref{eqn:counitpairing}. The second part of the lemma will follow if we can show that the map from $X_1$ to $X_0$ in \eqref{eqn:counitpairing} is $d_1$. Before doing this, we will prove the first part of the lemma.

    For $u \in X_0$, let $\psi = s_2 s_0 u$. From the commutativity of \eqref{eqn:counitpairing}, we have $d_2^2 s_2^1 = \id$ and $d_0^2 s_2^1 = \tau^1$, so it follows that
    \begin{align*}
        d_2 \psi &= s_0 u,\\
        d_0 \psi &= \tau s_0 u = s_1 u.
    \end{align*}
    Using the above equations and the simplicial identities, we then have $d_1 s_1 u = d_1 d_0 \psi = d_0 d_2 \psi = d_0 s_0 u = u$, which proves the first part of the lemma.

    It remains to show that the map from $X_1$ to $X_0$ in \eqref{eqn:counitpairing} is $d_1$. For $x \in X_1$, let $u$ be its image in $X_0$ under said map. Since $s_1^0$ is injective, we have that $u$ is uniquely determined by the equation $s_1 u = d_1 s_2 x$. Taking $d_1$ of both sides and using the first part of the lemma, we have $u = d_1 d_1 s_2 x = d_1 d_2 s_2 x = d_1 x$.
\end{proof}

\begin{lem}\label{lem:ss}
    \begin{enumerate}
        \item $s_0^1 s_1^0 = s_2^1 s_0^0$,
        \item $s_1^1 s_1^0 = s_2^1 s_1^0$.
    \end{enumerate}
\end{lem}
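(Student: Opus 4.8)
The plan is to reduce both identities to a single image characterization of $s_2^1$ extracted from the pullback square \eqref{eqn:paraunital} of Lemma \ref{lem:paracyclicpullback}. That square exhibits $X_1$ as the pullback $X_2\times_{X_1}X_0$ along $d_1^2$ and $s_1^0$, with the leg to $X_2$ given by $s_2^1$. Since $s_1^0$ is injective (as noted in the proof of Lemma \ref{lem:counit}), the projection of this pullback onto $X_2$ is injective with image $\{\psi\in X_2 : d_1^2\psi\in\operatorname{im}(s_1^0)\}$, so I would record the clean statement that $\psi\in\operatorname{im}(s_2^1)$ if and only if $d_1^2\psi\in\operatorname{im}(s_1^0)$. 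Moreover, $d_2^2 s_2^1=\id$ shows $s_2^1$ is injective with left inverse $d_2^2$; hence two elements of $\operatorname{im}(s_2^1)$ are equal as soon as their images under $d_2^2$ agree. This reduces each identity to two checks: that both sides lie in $\operatorname{im}(s_2^1)$, and that $d_2^2$ sends them to the same element.

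With this reformulation each part is a short manipulation of simplicial identities together with $d_1^1 s_1^0=\id$ from Lemma \ref{lem:paracyclicpullback}(1). For part (1), the right-hand side $s_2^1 s_0^0$ is visibly in $\operatorname{im}(s_2^1)$, and for the left-hand side I would use $d_1 s_0=\id$ to get $d_1^2 s_0^1 s_1^0 = s_1^0$, which lies in $\operatorname{im}(s_1^0)$, placing $s_0^1 s_1^0$ in $\operatorname{im}(s_2^1)$ as well. Comparing $d_2^2$ then finishes it: $d_2^2 s_2^1 s_0^0 = s_0^0$ directly, while $d_2^2 s_0^1 s_1^0 = s_0^0 d_1^1 s_1^0 = s_0^0$ using $d_2 s_0 = s_0 d_1$ and $d_1^1 s_1^0=\id$. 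Part (2) runs identically: $s_2^1 s_1^0$ is manifestly in $\operatorname{im}(s_2^1)$, and $d_1^2 s_1^1 s_1^0 = s_1^0$ (from $d_1 s_1=\id$) shows $s_1^1 s_1^0$ is too; then $d_2^2 s_2^1 s_1^0 = s_1^0$ and $d_2^2 s_1^1 s_1^0 = s_1^0$ (from $d_2 s_1=\id$) give equality.

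The only genuinely delicate point — and the sole place the Frobenius/$2$-Segal input enters — is the image characterization of $s_2^1$, so I would state and justify that carefully before doing anything else. Everything downstream is formal. I would also flag one trap to avoid: the computation must \emph{not} invoke any face formula for $s_1^0$ expressed through $\tau^1$ (for instance a value of $d_0 s_1^0$), since the paracyclic face relations \eqref{eqn:paracyclicface} are not yet available at this stage. The argument above is arranged precisely so that it uses only $d_1^1 s_1^0=\id$, the three face formulas for $s_2^1$, injectivity of $s_1^0$, and the pullback \eqref{eqn:paraunital}; this keeps Lemma \ref{lem:ss} logically prior to the later verification of the full paracyclic structure, where these identities are consumed.
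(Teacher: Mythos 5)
Your proposal is correct and follows essentially the same route as the paper: the paper also uses the pullback square \eqref{eqn:paraunital} to place $s_0 s_1 u$ (resp.\ $s_1 s_1 u$) in the image of $s_2^1$ after checking $d_1$ of it lies in $\operatorname{im}(s_1^0)$, and then identifies the unique preimage by applying $d_2$ together with $d_2 s_0 = s_0 d_1$ and $d_1^1 s_1^0 = \id$. Your explicit image characterization of $s_2^1$ is just a cleanly packaged version of the same pullback argument, and your caution about not invoking the paracyclic face relations at this stage matches the paper's logical ordering.
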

\begin{proof}
    For $u \in X_0$, let $\psi = s_0 s_1 u$. Then $d_1 \psi = s_1 u$, so by the pullback property in Lemma \ref{lem:paracyclicpullback}, there exists a unique $x \in X_1$ such that $\psi = s_2 x$. Applying $d_2$ to both sides and using the simplicial relations together with the relations in Lemma \ref{lem:paracyclicpullback}, we get $x = d_2 \psi = d_2 s_0 s_1 u = s_0 d_1 s_1 u = s_0 u$. Thus, $\psi = s_2 s_0 u$, which gives the first identity. The proof of the second identity is similar.
\end{proof}

For $n \geq 2$, we define $s_{n+1}^n : X_n \to X_{n+1}$ in a similar way to the description of the degeneracy maps in Section \ref{sec:faceanddegen}. Given $\psi \in X_n$, $n \geq 2$, viewed as an $(n+1)$-gon, we obtain $s_{n+1} \psi$ by attaching the ``degenerate'' $2$-simplex $s_2 e_\out \psi$ along the edge $e_\out \psi$; see Figure \ref{fig:extradegen}.

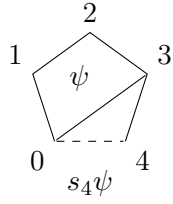
\begin{figure}[th]
\begin{center}
\begin{tikzpicture}[scale=0.8]
    \begin{scope}
        \draw (-54: 1) node[vertex] (4) {} node[anchor=north west] {4}
        -- (18:1) node[vertex] (3) {} node[anchor=south west] {3}
        -- (90:1) node[vertex] (2) {} node[anchor=south] {2}
        -- (162:1) node[vertex] (1) {} node[anchor=south east] {1}
        -- (-126:1) node[vertex] (0) {} node[anchor=north east] {0};
        \draw[dashed] (0) -- (4);
        \draw (0) -- (3);
        \node at (126:.3) {$\psi$};
        \node at (0,-1.5) {$s_4 \psi$};
    \end{scope}

\end{tikzpicture}
\end{center}
    \caption{Graphical calculus for the extra degeneracy maps. The $2$-simplex appended to $\psi$ is $s_2 e_\out \psi$. The edge from $0$ to $4$ is dashed to emphasize that it is in the image of $s_1^0$.}
    \label{fig:extradegen}
\end{figure}

\begin{lem}\label{lem:paracyclicrelations}
    The relations \eqref{eqn:paracyclicgamma1} hold for all $n \geq 2$, and the relations \eqref{eqn:paracyclicgamma2} hold for all $n \geq 1$.
\end{lem}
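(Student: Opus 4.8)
The plan is to verify both families of relations with the graphical calculus of Section~\ref{sec:faceanddegen}, reducing every case to a local statement about the single appended extra-degenerate triangle $s_2 e_\out\psi$ and then invoking $2$-Segal uniqueness to glue. Fix $\psi\in X_n$ and identify it, via its fan triangulation from vertex $0$, with the tuple of triangles $T_j=\psi|_{\{0,j,j+1\}}$ for $1\le j\le n-1$. By construction $s_{n+1}^n\psi$ is then the element of $X_{n+1}$ whose fan from $0$ is $(T_1,\dots,T_{n-1},s_2 e_\out\psi)$, the last triangle occupying the vertices $\{0,n,n+1\}$. The base cases are already available: the relations \eqref{eqn:paracyclicgamma1} for $n=1$ are exactly the face identities for $s_2^1$ recorded in Lemma~\ref{lem:paracyclicpullback}, and the relations \eqref{eqn:paracyclicgamma2} for $n=0$ are precisely the two identities of Lemma~\ref{lem:ss}.

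For the face relations \eqref{eqn:paracyclicgamma1} with $n\ge2$ I would split into three ranges of $i$. When $i=n+1$, the map $d_{n+1}$ restricts to the sub-polygon on $\{0,\dots,n\}$, deleting the appended triangle and returning $\psi$, so $d_{n+1}^{n+1}s_{n+1}^n=\id$. When $1\le i\le n-1$ the deleted vertex lies strictly inside the $\psi$-part and is disjoint from the appended triangle; hence the outer edge is unchanged, $e_\out(d_i\psi)=e_\out\psi$, and both $d_i^{n+1}s_{n+1}^n\psi$ and $s_n^{n-1}d_i^n\psi$ acquire the same fan, namely the fan of $d_i\psi$ with $s_2 e_\out\psi$ appended; equality follows from $2$-Segal uniqueness.

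The case $i=n$ is the seam case, and it is the main obstacle, because the deleted vertex is shared by the top triangle $T_{n-1}$ and the appended triangle. Here I would restrict to the sub-$3$-simplex $\Theta:=(s_{n+1}^n\psi)|_{\{0,n-1,n,n+1\}}$, whose two relevant faces are $T_{n-1}$ and $s_2 e_\out\psi$; by $2$-Segal gluing this forces $\Theta=s_3^2 T_{n-1}$. Deleting vertex $n$ merges $T_{n-1}$ and $s_2 e_\out\psi$ into the single triangle $d_2\Theta=d_2^3 s_3^2 T_{n-1}$, and one checks directly that $s_n^{n-1}d_n\psi$ has the same fan, with $d_2\Theta$ as its final triangle. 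Everything thus reduces to the single instance $d_2^3 s_3^2=s_2^1 d_2^2$ at $n=2$, which I would prove by hand: writing $\Psi=s_3^2\chi$, the construction gives $d_3\Psi=\chi$ and $d_1\Psi=s_2 d_1\chi$, and the simplicial identities then yield $d_2 d_2\Psi=d_2\chi$ together with $d_1 d_2\Psi=s_1^0 d_1 d_2\chi$. Since the middle face of $d_2\Psi$ lies in the image of $s_1^0$, the pullback square \eqref{eqn:paraunital} of Lemma~\ref{lem:paracyclicpullback} identifies $d_2\Psi$ with $s_2 x$ for a unique $x$, and $d_2 d_2\Psi=d_2\chi$ pins down $x=d_2\chi$, giving $d_2\Psi=s_2 d_2\chi$ as needed.

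Finally, for the degeneracy relations \eqref{eqn:paracyclicgamma2} with $n\ge1$ I would run the same localization. For $0\le i\le n-1$ the inserted degenerate $2$-simplex sits inside the $\psi$-part, away from the corner carrying the appended triangle, so $e_\out$ is preserved and the two sides of \eqref{eqn:paracyclicgamma2} produce identical fans; uniqueness gives equality. The two seam cases $i=n$ and $i=n+1$, where the inserted degeneracy and the appended triangle meet at the outer corner, are the delicate point: restricting to the sub-simplex on the last few vertices localizes each identity to a statement about two degenerate triangles sharing the (degenerate) outer edge, which is exactly the content of the two identities of Lemma~\ref{lem:ss}. I expect the bookkeeping in these seam reductions --- checking that the restriction of the higher extra degeneracy genuinely is the lower extra degeneracy $s_3^2$ or $s_2^1$, and tracking the relabelings --- to be the step demanding the most care.
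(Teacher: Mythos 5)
Your proposal is correct and follows essentially the same route as the paper: the generic cases are handled by the graphical calculus and $2$-Segal uniqueness, the seam case $i=n$ of \eqref{eqn:paracyclicgamma1} is localized to the quadrilateral and reduced to the single identity $d_2^3 s_3^2 = s_2^1 d_2^2$, which you then prove exactly as the paper does via the pullback square \eqref{eqn:paraunital}, and the exceptional case $i=n+1$ of \eqref{eqn:paracyclicgamma2} is reduced to the identities of Lemma \ref{lem:ss} (the paper reduces this case to $n=1$ and leaves it as an exercise, which your further reduction in fact completes). One small correction: the case $i=n$ of \eqref{eqn:paracyclicgamma2} is not actually a seam case and does not reduce to Lemma \ref{lem:ss} (whose identities concern the degenerate \emph{outer} edge, whereas here the degenerate edge is $e_{n+1}$) --- choosing the presentation of $s_n$ whose degenerate triangle sits on $\{n-1,n,n+1\}$ makes the two sides manifestly identical, which is why the paper treats all $i<n+1$ as straightforward.
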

\begin{proof}
    The graphical calculus makes it straightforward to see that \eqref{eqn:paracyclicgamma1} holds for $0 < i < n$ and for $i=n+1$. For $i=n$, the situation can be reduced to the case $n=2$, since the maps involved only affect the $2$-simplex with vertices $0$, $n-1$, and $n$. 
    
\begin{figure}[th]
\begin{center}
\begin{tikzpicture}[scale=0.8]
    \begin{scope}[shift={(0,0)}]
        \draw (-45:1) node[vertex] (3) {} node[anchor=north west] {3}
        -- (45:1) node[vertex] (2) {} node[anchor=south west] {2}
        -- (135:1) node[vertex] (1) {} node[anchor=south east] {1}
        -- (-135:1) node[vertex] (0) {} node[anchor=north east] {0};
        \draw[dashed] (0) -- (3);
        \draw (0) -- (2);
        \node at (135:.4) {$\psi$};
        \node at (-45:.4) {${\scriptstyle s_2 d_1}$};
    \end{scope}
    \equals{2}{0}
    \node at (2,-1.5) {$s_3 \psi$};    
    \begin{scope}[shift={(4,0)}]
        \draw (-45:1) node[vertex] (3) {} node[anchor=north west] {3}
        -- (45:1) node[vertex] (2) {} node[anchor=south west] {2}
        -- (135:1) node[vertex] (1) {} node[anchor=south east] {1}
        -- (-135:1) node[vertex] (0) {} node[anchor=north east] {0};
        \draw[dashed] (0) -- (3);
        \draw (1) -- (3);
        \node at (-135:.4) {${\scriptstyle d_2 s_3}$};
    \end{scope}
    \node at (7,0) {$\xmapsto[\phantom{xxxxxxx}]{d_2}$};
    \begin{scope}[shift={(10,-.3)}]
        \draw (-30:1) node[vertex] (2) {} node[anchor=north west] {2}
        -- (90:1) node[vertex] (1) {} node[anchor=south] {1}
        -- (210:1) node[vertex] (0) {} node[anchor=north east] {0};
        \draw[dashed] (0) -- (2);
        \node at (0,0) {${\scriptstyle d_2 s_3 \psi}$};
    \end{scope}        
\end{tikzpicture}
\end{center}
    \caption{Constructing $d_2 s_3 \psi$ for $\psi \in X_2$.}
    \label{fig:d2s3}
\end{figure}
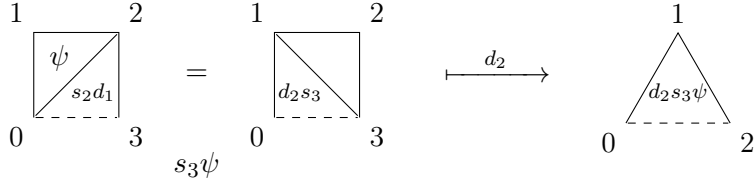

    For $\psi \in X_2$, the process of constructing $d_2 s_3 \psi$ involves attaching the $2$-simplex $s_2 d_1 \psi$, applying the $2$-Segal map $\hat{\mathcal{T}}_{02} \hat{\mathcal{T}}_{13}^{-1}$ to change the triangulation, and then deleting the $2$-simplex at vertex $2$. This is illustrated in Figure \ref{fig:d2s3}. Let $\xi = d_2 s_3 \psi$. Since $d_1 \xi$ is in the image of $s_1$, it follows from the pullback property in Lemma \ref{lem:paracyclicpullback} that there exists a unique $x \in X_1$ such that $s_2 x = \xi$. Applying $d_2$ to both sides, we get $x = d_2 \xi = d_2 \psi$, where in the last step we used the fact that the $2$-Segal map fixes the boundary edges. Thus, $d_2 s_3 \psi = \xi = s_2 d_2 \psi$, and it follows that \eqref{eqn:paracyclicgamma1} holds for $i=n$.

    The graphical calculus makes it straightforward to see that \eqref{eqn:paracyclicgamma2} holds for $i < n+1$. For $i=n+1$, the situation can be reduced to the case $n=1$, since the maps involved only depend on the edge $e_\out$. The proof for this case is similar to the analysis in Figure \ref{fig:d2s3}, and we leave it as an exercise.
\end{proof}

Lemmata \ref{lem:paracyclicpullback}, \ref{lem:ss}, and \ref{lem:paracyclicrelations} cover all the cases of relations \eqref{eqn:paracyclicgamma1}--\eqref{eqn:paracyclicgamma2}. The following lemma provides the remaining result needed to conclude that the maps $s_{n+1}^n$ give a paracyclic structure. 

\begin{lem}
The maps $\tau^n := d_0^{n+1}s_{n+1}^n$ are invertible.
\end{lem}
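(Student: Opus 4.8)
The plan is to exhibit, for every $n$, an explicit two-sided inverse to $\tau^n$ realized as a \emph{reverse rotation} of the associated polygon. The only new ingredient this requires is the inverse of the boundary automorphism $\tau^1$, which is available precisely because $\tau^1$ is invertible by construction (see \eqref{eqn:alpha}). The base of the argument is in low degree: $\tau^1 = d_0^2 s_2^1$ is the given automorphism, while $\tau^0 = d_0^1 s_1^0$ is handled directly, since the relation \eqref{eqn:paracyclicdegen} (valid by the discussion in Section \ref{sec:paracyclicset}) gives $s_0^0 \tau^0 = (\tau^1)^2 s_0^0$; because $s_0^0$ is injective this already shows $\tau^0$ is injective, and the reverse counit section $(\tau^1)^{-1} s_0^0$ supplies the inverse in degree zero.

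For the general case I would first record the oppositely-twisted degenerate triangle $\bar{s}_2^1 := s_2^1 \circ (\tau^1)^{-1}$, which by Lemma \ref{lem:paracyclicpullback} satisfies $d_0^2 \bar{s}_2^1 = \id$, $d_2^2 \bar{s}_2^1 = (\tau^1)^{-1}$, and $d_1^2 \bar{s}_2^1 \in \operatorname{im}(s_1^0)$. For $n \geq 2$ I then define a reverse extra degeneracy $\bar{s}_{n+1}^n : X_n \to X_{n+1}$ exactly as in Figure \ref{fig:extradegen}, attaching the triangle $\bar{s}_2^1(e_{\out}\psi)$ along the edge $e_{\out}\psi$ but inserting the new vertex on the opposite side, and I set $(\tau^n)^{-1} := d_{n+1}^{n+1} \bar{s}_{n+1}^n$. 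Geometrically $\tau^n$ and this map are the two rotations of the $(n+1)$-gon by a single click, the wrap-around edge being twisted by $\tau^1$ in one direction and by $(\tau^1)^{-1}$ in the other.

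It then remains to check that $\tau^n$ and the reverse rotation are mutually inverse. Both composites attach a degenerate $2$-simplex and, after a single change of triangulation, delete it again; the analysis is exactly the one carried out in Lemma \ref{lem:paracyclicrelations} and illustrated in Figure \ref{fig:d2s3}, where the $2$-Segal isomorphism $\hat{\mathcal{T}}_{02}\hat{\mathcal{T}}_{13}^{-1}$ fixes the boundary edges and the pullback property of \eqref{eqn:paraunital} identifies the resulting simplex as degenerate. The key point is that the two wrap-around twists compose to $\tau^1 (\tau^1)^{-1} = \id$, so that the net effect on each component of a fixed triangulation is the identity, whence $\tau^n (\tau^n)^{-1} = (\tau^n)^{-1}\tau^n = \id$. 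I expect the main obstacle to be purely bookkeeping: tracking the cyclic relabelling of vertices and the placement of the twist through the change-of-triangulation maps, so that the forward and reverse twists are seen to cancel rather than accumulate. As in Lemma \ref{lem:paracyclicrelations}, the higher cases reduce to the degree-two computation of Figure \ref{fig:d2s3}, because all the maps involved alter only the two triangles incident to the wrap-around edge.
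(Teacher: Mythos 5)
Your proposal is correct and follows essentially the same route as the paper: reduce to the $n=2$ case since only the two triangles at the wrap-around edge are affected, reconstruct the deleted degenerate $2$-simplex from the output via $s_2^1\circ(\tau^1)^{-1}$ applied to $e_\out$ (the paper's identity $d_2 s_3\psi = s_2\tau^{-1}d_1\tau\psi$), and handle $\tau^0$ separately using the paracyclic relations. Your packaging of the inverse as an explicit reverse extra degeneracy $d_{n+1}^{n+1}\bar{s}_{n+1}^n$ is just a more explicit form of the paper's reconstruction argument (note only that in degree zero the inverse is $d_1^1(\tau^1)^{-1}s_0^0$, i.e.\ you must compose with $d_1^1$ to land back in $X_0$).
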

\begin{proof}
    The proof for $n \geq 2$ can be reduced to the $n=2$ case, since the maps involved only affect the $2$-simplex with vertices $0$, $1$, and $n$. For $\psi \in X_2$, the process of constructing $\tau \psi = d_0 s_3 \psi$ involves attaching the $2$-simplex $s_2 d_1 \psi$, applying the $2$-Segal map $\hat{\mathcal{T}}_{02} \hat{\mathcal{T}}_{13}^{-1}$ to change the triangulation, and then deleting the $2$-simplex at vertex $0$. This is illustrated in Figure \ref{fig:d0s3}. 

\begin{figure}[th]
\begin{center}
\begin{tikzpicture}[scale=0.8]
    \begin{scope}[shift={(0,0)}]
        \draw (-45:1) node[vertex] (3) {} node[anchor=north west] {3}
        -- (45:1) node[vertex] (2) {} node[anchor=south west] {2}
        -- (135:1) node[vertex] (1) {} node[anchor=south east] {1}
        -- (-135:1) node[vertex] (0) {} node[anchor=north east] {0};
        \draw[dashed] (0) -- (3);
        \draw (0) -- (2);
        \node at (135:.4) {$\psi$};
        \node at (-45:.4) {${\scriptstyle s_2 d_1}$};
    \end{scope}
    \equals{2}{0}
    \node at (2,-1.5) {$s_3 \psi$};    
    \begin{scope}[shift={(4,0)}]
        \draw (-45:1) node[vertex] (3) {} node[anchor=north west] {3}
        -- (45:1) node[vertex] (2) {} node[anchor=south west] {2}
        -- (135:1) node[vertex] (1) {} node[anchor=south east] {1}
        -- (-135:1) node[vertex] (0) {} node[anchor=north east] {0};
        \draw[dashed] (0) -- (3);
        \draw (1) -- (3);
        \node at (45:.4) {${\scriptstyle \tau \psi}$};
    \end{scope}
    \node at (7,0) {$\xmapsto[\phantom{xxxxxxx}]{d_0}$};
    \begin{scope}[shift={(10,-.3)}]
        \draw (-30:1) node[vertex] (2) {} node[anchor=north west] {2}
        -- (90:1) node[vertex] (1) {} node[anchor=south] {1}
        -- (210:1) node[vertex] (0) {} node[anchor=north east] {0}
        -- cycle;        
        \node at (0,0) {$\tau \psi$};
    \end{scope}        
\end{tikzpicture}
\end{center}
    \caption{Constructing $\tau \psi$ for $\psi \in X_2$.}
    \label{fig:d0s3}
\end{figure}
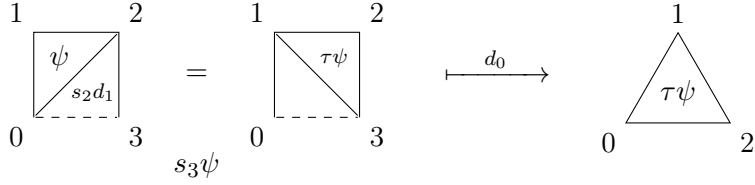
The invertibility of this process amounts to the fact that the deleted $2$-simplex $d_2 s_3 \psi$ can be reconstructed from $\tau \psi$. Specifically, using the fact that $\tau^1$ is invertible, we have $d_2 s_3 \psi = s_2 d_2 \psi = s_2 \tau^{-1} \tau d_2 \psi = s_2 \tau^{-1} d_1 \tau \psi$. 

Finally, it can be shown using the paracyclic relations that $d_1^1 (\tau^1)^{-1} s_0^0$ is the inverse of $\tau^0$.
\end{proof}

To summarize this section so far, we have shown that a paracyclic structure on a $2$-Segal set $X_\bullet$ induces a Frobenius structure on the corresponding pseudomonoid in $\Span$, and conversely, that a Frobenius structure on the pseudomonoid induces a paracyclic structure. To complete the proof of Theorem \ref{thm:paracyclic}, we observe that the two directions are inverses (up to equivalence). This follows from the fact that the construction of $s_2^1$ in Lemma \ref{lem:paracyclicpullback} is unique, and that the higher maps $s_{n+1}^n$ are uniquely determined by $s_2^1$.

\begin{rmk}
Recall from Section \ref{sec:paracycliccat} that there is a natural (full) functor from the paracyclic category $\Lambda_\infty$ to the cyclic category $\Lambda$. Thus we can view cyclic sets, i.e.\ functors $\Lambda^\op \to \Set$, as paracyclic sets that factor through $\Lambda$. In terms of the generator- and-relation description in Section \ref{sec:paracyclicset}, a paracyclic set is cyclic if $(\tau^n)^{n+1} = \id$ for all $n$.
 
Given a 2-Segal paracyclic set $X_\bullet$, one can use the 2-Segal conditions to show that $X_\bullet$ is cyclic if and only if $(\tau^1)^2=\id$ and $(\tau^2)^3=\id$. The one necessary relation, $\tau^0=\id$, which does not follow from the 2-Segal conditions follows directly from the paracyclic identities since 
\[
\tau^0=d^1_0\circ s_0^0\circ \tau^0= d_0^1\circ (\tau^1)^2\circ s^0_0=d_0^1\circ s_0^0=\id.
\] 
In \cite{Stern:span}, it was shown (in a more general setting than ours) that $2$-Segal cyclic objects correspond to Calabi-Yau objects (a categorification of symmetric Frobenius objects) in $\Span$. Thus we see that a Frobenius pseudomonoid in $\Span$ is in fact Calabi-Yau if it satisfies the symmetry condition $(\tau^1)^2 = \id$ and the coherence condition $(\tau^2)^3 = \id$.

\end{rmk}

\subsection{Examples}\label{subsec:para_egs}

\begin{example}[Groupoids]
Let $\mathcal{G}$ be a groupoid. As a special case of Example \ref{ex:categories}, the nerve $N_\bullet \mathcal{G}$ is $2$-Segal. We can define additional degeneracy maps by $s_1^0(u) = s_0^0(u) = \id_u$ and
\[ s_{n+1}^n(g_1, \dots, g_n) = (g_1, \dots, g_n, (g_1 \cdots g_n)^{-1})\]
for $n \geq 1$. This gives a cyclic structure where the associated automorphisms $\tau^n: N_n\mathcal{G} \to N_n\mathcal{G}$ are given by
\[ \tau^n(g_1, \dots, g_n) = (g_2, \dots, g_n, (g_1 \cdots g_n)^{-1}).\]

More generally, suppose that $\mathcal{G}$ is equipped with a bisection, i.e.\ a map $\omega: N_0 \mathcal{G} \to N_1 \mathcal{G}$ such that $d_1 \omega = \id$ and $d_0 \omega$ is bijective. Then we can take $s_1^0 = \omega$ and
\[ s_{n+1}^n(g_1, \dots, g_n) = (g_1, \dots, g_n, (g_1 \cdots g_n)^{-1}\omega( d_1(g_1)))\]
for $n \geq 1$. This gives a paracyclic structure where the associated automorphisms $\tau^n$ are given by
\[ \tau^n(g_1, \dots, g_n) = (g_2, \dots, g_n, (g_1 \cdots g_n)^{-1}\omega( d_1(g_1))).\]
This paracyclic structure is cyclic if and only if $\omega$ is central, in the sense that $\omega(d_1(g))^{-1} g \omega(d_0(g)) = g$ for all $g \in \mathcal{G}$.
\end{example}

\begin{example}[A partial monoid example]
Here is a simple example of a partial monoid for which the nerve (see Example \ref{ex:partialmonoids}) admits a cyclic structure. For a fixed natural number $L$, consider the set $M = \{0, \dots, L\}$ with the partial monoid structure given by addition, where the operation is undefined for sums larger than $L$. Then the nerve $N_\bullet M$ has
$N_0 M = \{0\}$ and
\[ N_n M = \left\{(x_1, \dots, x_n) \in M^n \suchthat \sum x_i \leq L\right\}\]
for $n \geq 1$.

We define additional degeneracy maps by $s_1^0(0) = L$ and
\[ s_{n+1}^n(x_1, \dots, x_n) = (x_1, \dots, x_n, L - \sum x_i)\]
for $n \geq 1$. This gives a cyclic structure where the associated automorphisms $\tau^n$ are given by 
\[ \tau^n(x_1,\dots, x_n) = (x_2, \dots, x_n, L - \sum x_i).\]

This example fits into a larger class of examples of $2$-Segal cyclic sets, which are associated to \emph{effect algebroids}. In \cite[Theorem 5.1.4]{Roumen_2017}, Roumen shows that there is a fully faithful embedding of effect algebroids into 2-Segal cyclic sets. In this example, the partial monoid $M$ is an effect algebra, with orthocomplement $a^\perp=L-a$.
\end{example}

\begin{example}[Twisted cyclic nerves]
Let $\C$ be a small category equipped with an automorphism $F: \C \to \C$. Then the twisted cyclic nerve $N^F_\bullet \C$ (see Example \ref{ex:twistedcyclic}) has additional degeneracy maps $s_{n+1}^n$ that insert an identity in the first entry. This gives a paracyclic structure where the associated automorphisms $\tau^n$ are given by
\[ \tau^n(f_n, \dots, f_0) = (F(f_0), f_n, \dots, f_1).\]
We note that invertibility of $F$ is needed to ensure the invertibility of $\tau^n$. This paracyclic structure is only cyclic in the case where $F = \id$.

As special cases, one can obtain paracyclic structures on the twisted inertia groupoid associated to a group equipped with an automorphism (see Example \ref{ex:twistedinertia}), and on the building associated to a poset equipped with an automorphism (see Example \ref{ex:buildings}).
\end{example}

\section{\texorpdfstring{$\Gamma$}{Gamma}-structures and commutative pseudomonoids in \texorpdfstring{$\Span$}{Span}}
\label{sec:gammacommutative}

In this section, we consider commutative structures on pseudomonoids in $\Span$. We find that commutative structures correspond to \emph{$\Gamma$-structures} on $2$-Segal sets.

In Section \ref{sec:finitepointed}, we provide some background information on the category (which we denote $\Phi_\ast$) of finite pointed cardinals, which is a skeleton of the category of finite pointed sets. It is also opposite to Segal's category $\Gamma$, which first appeared in \cite{SegalCoh} as a tool to study infinite loop spaces, and is now a significant part of in the Connes-Consani approach to the field with one element \cite{Connes-Consani:Absolute}. 

A \emph{$\Gamma$-set} is a functor $\Gamma^{\op} = \Phi_\ast \to \Set$. In Section \ref{sec:cut}, we describe a functor $\Cut: \Delta^\op \to \Phi_\ast$. Via $\Cut$, we can obtain from any $\Gamma$-set an underlying simplicial set, so one can think of a $\Gamma$-set as a simplicial set with some additional structure. 

The next parts of the section are devoted to explicitly describing this additional structure. In Sections \ref{sec:fincard} and \ref{subsec:gen_rel_Phistar}, we use a generator-and-relation description in \cite{Grandis} of the category $\Phi$ of finite cardinals to obtain a generator-and-relation description of $\Phi_\ast$. In Section \ref{sec:gamma}, we arrive at Theorem \ref{thm:finstar}, which says that a $\Gamma$-set is equivalent to a simplicial set $X_\bullet$ where each set $X_n$ is equipped with an action of the symmetric group $S_n$, satisfying certain compatibility relations.

In Section \ref{sec:gammacommutative}, we prove the second main result of the paper, that a $\Gamma$-structure on a $2$-Segal set is equivalent to a commutative structure on the corresponding pseudomonoid in $\Span$. We conclude the section with examples in Section \ref{subsec:sym_egs}.

\subsection{The category of finite pointed cardinals}\label{sec:finitepointed}

Let $\Phi_\ast$ denote the category of finite pointed cardinals. Its objects are the sets
\[\fin{n} = \{\ast\} \cup \{1,\dots,n\}\]
for $n \geq 0$, and its morphisms are maps $f: \fin{n} \to \fin{m}$ such that $f(\ast) = \ast$.

The category $\Phi_\ast$ appears in relation to commutative algebraic structures in numerous ways. First, it is a skeleton of the category $\Fin_\ast$ of finite pointed sets, which is the category of operators for the commutative operad. Additionally, $\Phi_\ast$ is equivalent to the opposite category of \emph{Segal's category} $\Gamma$, which is closely related to commutative algebraic objects in higher categories. In particular, in \cite{SegalCoh}, Segal defines a $\Gamma$-space to be a functor
\[
\begin{tikzcd}
A:&[-3em] \Gamma^\op \cong \Phi_\ast \arrow[r] & \scr{S} 
\end{tikzcd}
\]
(where $\scr{S}$ is the category of topological spaces) such that $A(\fin{0})$ is contractible, and where the maps 
\[
A(\fin{n} )\to A(\fin{1})\times \cdots \times A(\fin{1}) 
\]
induced by the morphisms 
\[
\begin{tikzcd}[ampersand replacement=\&,row sep=0em]
\rho^i: \&[-3em] \fin{n} \arrow[r] \& \fin{1} \\
 \& j \arrow[r,mapsto] \& \begin{cases}
 	1 & \mbox{ if }j=i,\\
 	\ast & \mbox{ otherwise}
 \end{cases}
\end{tikzcd}
\]
are homotopy equivalences. Segal's $\Gamma$-spaces are one model for $E_\infty$-spaces, and thus for infinite loop spaces (the latter under additional conditions). 

\subsection{The \texorpdfstring{$\Cut$}{Cut} functor}\label{sec:cut}

There is a functor $\Cut: \Delta^\op \to \Phi_\ast$, defined as follows. On objects, we have $\Cut([n]) = \fin{n}$. Given a morphism $f \in \Hom_{\Delta}([n],[m])$, the induced morphism $\Cut(f) \in \Hom_{\Phi_\ast}(\fin{m},\fin{n})$ is defined by $\Cut(f)(\ast) = \ast$ and, for $i = 1, \dots, m$,
\[ \Cut(f)(i) = 
\begin{cases} 
\ast & \mbox{if } f(0) \geq i \mbox{ or } f(n) < i, \\
\min\{k : f(k) \geq i\} & \mbox{ otherwise}.
\end{cases} \]
One can visualize the Cut functor by depicting the numerical elements of $\fin{n}$ as representing the interstices between the elements of $[n]$, with $\ast$ representing the exterior regions (see Figure \ref{fig:cutobject}).
        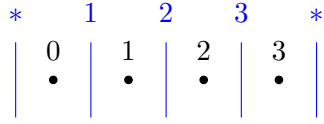
\begin{figure}
	\begin{center}
		\begin{tikzpicture}
		\foreach \x/\lab in {0/0,1/1,2/2,3/3}{
		\path (\x,0) node[label=above:$\lab$]{};
		\draw[fill=black] (\x,0) circle (0.05);        
		};
		\foreach \x in {-0.5,0.5,1.5,2.5,3.5}{
		\draw[blue] (\x,-0.5) -- (\x,0.5);
		} 
		\foreach \x/\lab in {1/1,2/2,3/3}{
		\path[blue] (\x-0.5,0.5) node[label=above:$\lab$]{};   
		};
		\path[blue] (-0.5,0.5) node[label=above:$\ast$]{};  
		\path[blue] (3.5,0.5) node[label=above:$\ast$]{};    
		\end{tikzpicture}
	\end{center}
        \caption{Elements of $\fin{3}$ depicted as interstices of the elements of $[3]$.}
        \label{fig:cutobject}
        \end{figure}
        
A morphism $f \in \Hom_\Delta([n],[m])$ can be depicted by arrows from each $k \in [n]$ to $f(k) \in [m]$. Since $f$ is monotonic, the arrows are noncrossing. Thus, for each $i \in \fin{m}$, the interstice representing $i$ can be connected to a unique interstice representing $\Cut(f)(i) \in \fin{n}$ or to the exterior region. An example is shown in Figure \ref{fig:cutmorphism}.
	\begin{figure}[t]
            \begin{center}
		\begin{tikzpicture}[yscale=.8]
		\foreach \x/\lab in {0/0,1/1,2/2,3/3}{
			\path (\x,0) node[label=above:$\lab$] (A\lab){};
			\draw[fill=black] (\x,0) circle (0.05);
		};
		\foreach \x in {-0.5,0.5,1.5,2.5,3.5}{
			\draw[blue] (\x,-0.5) -- (\x,0.5);
		} 
		\foreach \x/\lab in {1/1,2/2,3/3}{
		\path[blue] (\x-0.5,0.5) node[label=above:$\lab$]{};   
		};
		\path[blue] (-0.5,0.5) node[label=above:$\ast$]{};  
		\path[blue] (3.5,0.5) node[label=above:$\ast$]{};  
  
		\foreach \x/\lab in {0.5/0,1.5/1,2.5/2}{
			\path (\x,-3) node[label=below:$\lab$] (B\lab){};
			\draw[fill=black] (\x,-3) circle (0.05);
		};
		\foreach \x in {0,1,2,3}{
			\draw[blue] (\x,-3.5) -- (\x,-2.5);
		};
	\foreach \x/\lab in {1/1,2/2}{
		\path[blue] (\x,-3.5) node[label=below:$\lab$]{};   
		\path[blue] (0,-3.5) node[label=below:$\ast$]{};  
		\path[blue] (3,-3.5) node[label=below:$\ast$]{};     
		} 
	  \draw[->] (B0) to (A0);
	  \draw[->] (B1) to (A2);
	  \draw[->] (B2) to (A2);
	  \begin{scope}[color=red,decoration={
	  	markings,
	  	mark=at position 0.5 with {\arrow{>}}}]
	  	\draw[postaction={decorate}] (-0.5,-0.5) to (0,-2.5);
	  	\draw[postaction={decorate}] (0.5,-0.5) to (1,-2.5);
	  	\draw[postaction={decorate}] (1.5,-0.5) to (1,-2.5);
	  	\draw[postaction={decorate}] (2.5,-0.5) to (3,-2.5);
	  	\draw[postaction={decorate}] (3.5,-0.5) to (3,-2.5);
	  \end{scope}
		\end{tikzpicture}
        \end{center}
        \caption{A morphism $f \in \Hom_\Delta([2],[3])$, where $f(0) = 0$, $f(1) = f(2) = 2$. The induced morphism $\Cut(f) \in \Hom_{\Phi_\ast}(\fin{3},\fin{2})$ is given by $\Cut(f)(\ast) = \Cut(f)(3) = \ast$, $\Cut(f)(1) = \Cut(f)(2) = 1$.}
        \label{fig:cutmorphism}
        \end{figure}

\subsection{The category of finite cardinals}\label{sec:fincard}

It is often useful to describe simplicial sets in terms of face and degeneracy maps. Similarly, it will be useful to have a description of $\Phi_\ast$ in terms of generators and relations, in a way that is compatible with the functor $\Cut$. To derive such a description, we will make use of the well-studied category $\Phi$ of finite cardinals. 

The category $\Phi$ is the full subcategory of $\Set$ with objects $\underline{0} = \emptyset$ and $\underline{n} = \{0, \dots, n-1\} = [n-1]$ for $n \geq 1$. In \cite{Grandis}, Grandis obtained a generator-and-relation description of $\Phi$, which we summarize in this subsection. In Section \ref{subsec:gen_rel_Phistar}, we will use the description of $\Phi$ to obtain a generator-and-relation description of $\Phi_\ast$.

The simplex category $\Delta$ is a subcategory of $\Phi$. We therefore have the coface maps $\delta_i^n: \underline{n} \to \underline{n+1}$ for $0<n$ and $0\leq i\leq n$ and codegeneracy maps $\sigma_i^n: \underline{n+2} \to \underline{n+1}$ for $0 \leq i \leq n$, satisfying the cosimplicial relations
\begin{align}\label{eqn:cosimp1}
    \delta_i \delta_j &= \delta_{j+1} \delta_i, \;\;\;\;\;\; i \leq j,\\
    \sigma_j \sigma_i &= \sigma_i \sigma_{j+1}, \;\;\;\;\;\; i \leq j, \\
    \sigma_j \delta_i &= \begin{cases}
        \delta_i \sigma_{j-1}, & i < j, \\
        \id, & i = j, j+1, \\
        \delta_{i-1} \sigma_j, & i > j+1.
    \end{cases} \label{eqn:cosimp3}
\end{align}
In $\Phi$, there are also \emph{main transposition maps} $r_i^n: \underline{n+2} \to \underline{n+2}$ for $0 \leq i \leq n$ which exchange $i$ and $i+1$. The main transposition maps generate the permutation groups and satisfy the \emph{Moore relations}
\begin{align} \label{eqn:moore1}
    (r_i)^2 &= \id, \\
    r_i r_j r_i &= r_j r_i r_j, \;\;\; i = j-1, \\
    r_i r_j &= r_j r_i, \;\;\;\;\;\; i < j-1. \label{eqn:moore3}
\end{align}

An arbitrary map $g: \underline{n} \to \underline{m}$ can be factored as $g = h \rho$, where $\rho: \underline{n} \to \underline{n}$ is a permutation and $h: \underline{n} \to \underline{m}$ is monotonic, so the coface, codegeneracy, and main transposition maps generate $\Phi$. In addition to the cosimplicial and Moore relations, there are the following \emph{mixed relations}:
\begin{align}\label{eqn:mixed1}
    r_i \delta_j &= \begin{cases}
        \delta_j r_i, & i < j-1, \\
        \delta_{i}, & i = j-1, \\
        \delta_{i+1}, & i = j, \\        
        \delta_j r_{i-1}, & i>j,
    \end{cases} \\
    r_i \sigma_j &= \begin{cases}
        \sigma_j r_i, & i < j-1,\\
        \sigma_i r_{i+1} r_i, & i = j-1,\\
        \sigma_{i+1}r_i r_{i+1}, & i=j,\\
        \sigma_j r_{i+1}, & i>j,
    \end{cases} \\
    \sigma_i r_i &= \sigma_i.\label{eqn:mixed3}
\end{align}
In \cite{Grandis}, it is shown that the above relations are sufficient, so that $\Phi$ is the category generated by the coface, codegeneracy, and main transposition maps under the relations \eqref{eqn:cosimp1}--\eqref{eqn:mixed3}.

\subsection{A generator-and-relation description of \texorpdfstring{$\Phi_\ast$}{the category of finite pointed cardinals}}\label{subsec:gen_rel_Phistar}

There is a forgetful functor $P: \Phi_\ast \to \Phi$ which, on objects, takes $\fin{n}$ to $\underline{n+1}$, and on morphisms, takes $f \in \Hom_{\Phi_\ast}(\fin{n},\fin{m})$ to $P(f): \underline{n+1} \to \underline{m+1}$, given by $P(f)(0) = 0$ and
\[ P(f)(i) = \begin{cases}
    0 & \mbox{if } f(i) = \ast, \\
    f(i) & \mbox{otherwise}
\end{cases}\]
for $i = 1,\dots,n$.

The functor $P$ is faithful; its image consists of maps $g: \underline{n} \to \underline{m}$ with $n,m > 0$ and such that $g(0) = 0$. In particular, the image of $P$ contains the coface maps $\delta_i^n$ for $n \geq 1$ and $1 \leq i \leq n$, all of the codegeneracy maps $\sigma_i^n$, and the main transpositions $r_i^n$ for $1 \leq i \leq n$. We now consider the preimages of these maps in $\Phi_\ast$.

For $0 \leq i \leq n$, let $s_i^n \in \Hom_{\Phi_\ast}(\fin{n},\fin{n+1})$ be given by $P(s_i^n) = \delta_{i+1}^{n+1}$. Explicitly, this is the map that skips $i+1$:
\[s_i^n (k) = \begin{cases}
    k & \mbox{if } 1 \leq k \leq i,\\
    k+1 & \mbox{if } i < k \leq n.
\end{cases}
\]

For $0 \leq i \leq n-1$, let $d_i^n \in \Hom_{\Phi_\ast}(\fin{n}, \fin{n-1})$ be given by $P(d_i^n) = \sigma_i^{n-1}$. For $i > 0$, this is the map that collapses $i$ and $i+1$:
\[ d_i^n(k) = \begin{cases}
    k & \mbox{if } 1 \leq k \leq i,\\
    k-1 & \mbox{if } i < k \leq n.
\end{cases}
\]
Additionally, $d_0^n$ collapses $\ast$ and $1$. Specifically, $d_0^n(1) = \ast$ and $d_0^n(k) = k-1$ for $1 < k \leq n$. 

For $1 \leq i \leq n-1$, let $\theta_i^n: \fin{n} \to \fin{n}$ be given by $P(\theta_i^n) = r_i^{n-1}$. This is the map that swaps $i$ and $i+1$:
\[ \theta_i^n(k) = \begin{cases}
    k & \mbox{if } 1 \leq k < i,\\
    i+1 & \mbox{if } k = i,\\
    i & \mbox{if } k = i+1,\\
    k & \mbox{if } i+1 < k \leq n.
\end{cases}
\]

\begin{lem}
The maps $s_i^n$, $d_i^n$, and $\theta_i^n$ generate $\Phi_\ast$.
\end{lem}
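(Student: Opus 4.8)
The plan is to exploit the faithful functor $P:\Phi_\ast\to\Phi$. Since $P$ is faithful and injective on objects, it identifies $\Phi_\ast$ with the (non-full) subcategory $\mathcal{S}\subseteq\Phi$ whose objects are the $\underline{k}$ with $k\geq 1$ and whose morphisms are the maps $g$ with $g(0)=0$; this class is visibly closed under composition and contains the identities. Under this identification the generators correspond to $s_i^n\leftrightarrow\delta_{i+1}^{n+1}$, $d_i^n\leftrightarrow\sigma_i^{n-1}$, and $\theta_i^n\leftrightarrow r_i^{n-1}$, so it suffices to prove that $\mathcal{S}$ is generated by the cofaces $\delta_j$ with $j\geq 1$, all of the codegeneracies $\sigma_j$, and the transpositions $r_j$ with $j\geq 1$. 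The whole point is that the two $\Phi$-generators lying outside $\mathrm{im}(P)$, namely $\delta_0$ (which skips $0$) and $r_0$ (which swaps $0$ and $1$), must be avoided.

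Given $g\in\Hom_{\mathcal{S}}(\underline{n+1},\underline{m+1})$ with $g(0)=0$, I would first apply the factorization $g=h\rho$ recalled in Section~\ref{sec:fincard}, with $\rho$ a permutation and $h$ monotone. The key is to choose $\rho$ by a \emph{stable} sort: let $\rho^{-1}$ reorder the domain so that $h=g\rho^{-1}$ is non-decreasing, breaking ties among equal $g$-values by the original order. Because $g(0)=0$ is the minimal value and $0$ is the least index attaining it, stability forces $\rho^{-1}(0)=0$, hence $\rho(0)=0$ and $h(0)=g(0)=0$. Thus both factors fix $0$.

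It then remains to factor each piece within the allowed generators. The permutation $\rho$ fixes $0$, so it is a permutation of $\{1,\dots,n\}$ and is therefore a product of the adjacent transpositions $r_1,\dots,r_{n-1}$, none of which is $r_0$. For $h$, take its epi-mono factorization $h=\iota_S\circ e$, where $e$ is the monotone surjection onto $S=\mathrm{im}(h)$ and $\iota_S$ is the inclusion of $S$ into $\underline{m+1}$. Every monotone surjection is a composite of codegeneracies, all of which are permitted. Since $h(0)=0$ we have $0\in S$, so the missing values $\underline{m+1}\setminus S$ are all $\geq 1$; building $\iota_S$ by inserting these missing values one at a time (largest first, say) therefore uses only cofaces $\delta_j$ with $j\geq 1$ and never $\delta_0$. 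Assembling, $g=\iota_S\circ e\circ\rho$ is a composite of allowed generators, and transporting back along $P$ expresses the original morphism as a composite of the $s_i^n$, $d_i^n$, and $\theta_i^n$.

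The step I expect to require the most care is the simultaneous control of both factors: guaranteeing that the sorting permutation can be taken to fix $0$ (so that no $r_0$ is needed) and that the monotone part keeps $0$ in its image (so that no $\delta_0$ is needed). Both follow from the single hypothesis $g(0)=0$ together with $0$ being the least element, but one must check that the stable-sort choice is genuinely compatible with the standard decompositions into the named generators, rather than merely producing some factorization in $\Phi$ that happens to reintroduce the forbidden maps.
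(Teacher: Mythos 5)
Your proof is correct and follows essentially the same route as the paper's: both exploit the faithful functor $P$, factor a morphism $g$ with $g(0)=0$ as a monotone map composed with a permutation, arrange for both factors to fix $0$, and then decompose the permutation into the transpositions $r_j$ ($j\geq 1$) and the monotone part into the allowed cofaces and codegeneracies. The only (immaterial) difference is how the permutation is made to fix $0$: you build it in from the start via a stable sort, while the paper takes an arbitrary factorization $g=h\rho$ and swaps two entries of $\rho$ to produce a new permutation $\rho'$ with $\rho'(0)=0$ and $h\rho'=g$.
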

\begin{proof}
Since $P$ is faithful, it suffices to show that the image of $P$ is generated by the coface maps $\delta_i^n$ for $n \geq 1$ and $1 \leq i \leq n$, all of the coboundary maps $\sigma_i^n$, and the main transpositions $r_i^n$ for $1 \leq i \leq n$.

Let $g: \underline{n} \to \underline{m}$ be a map such that $g(0) = 0$. Let $g = h\rho$ be a factorization of $g$ into a permutation $\rho: \underline{n} \to \underline{n}$ and a monotonic map $h: \underline{n} \to \underline{m}$. Let $a = \rho(0)$ and $b = \rho^{-1}(0)$. Using the monotonicity of $h$, we have $g(b) = h(0)  \leq h(a) = g(0) = 0$, so $g(b) = 0$. As a result, if we define $\rho': \underline{n} \to \underline{n}$ by $\rho'(0)=0$, $\rho'(b)=a$, and $\rho'(i) = \rho(i)$ for other values of $i$, then we obtain a new factorization $g = h \rho'$.

Since $\rho'$ is a permutation such that $\rho'(0)=0$, it can be written as a composition of $r_i^n$ for $1 \leq i \leq n$. Since $h$ is a monotonic map such that $h(0)=0$, it can be written as a composition of $\delta_i^k$ for $1 \leq i \leq k$ and $\sigma_i^n$.
\end{proof}

Since $P$ is faithful, the relations satisfied by generators of $\Phi_\ast$ are exactly the relations satisfied by their images in $\Phi$. The relations induced from the cosimplicial relations \eqref{eqn:cosimp1}--\eqref{eqn:cosimp3} are as follows:
\begin{align}\label{eqn:simp1}
    s_i s_j &= s_{j+1} s_i, \;\;\;\;\;\; i \leq j,\\ \label{eqn:simp2}
    d_i d_j &= d_{j-1} d_{i}, \;\;\;\;\;\; i < j, \\ 
    d_i s_j &= \begin{cases}
        s_{j-1} d_i, & i < j, \\
        \id, & i = j, j+1, \\
        s_j d_{i-1}, & i > j+1.
    \end{cases} \label{eqn:simp3}
\end{align}
We note that these are the \emph{simplicial} relations, except that the final face maps $d_n^n$ are not included.

The relations induced from \eqref{eqn:moore1}--\eqref{eqn:moore3} are as follows:
\begin{align} \label{eqn:moore1b}
    (\theta_i)^2 &= \id, \\ \label{eqn:moore2b}
    \theta_i \theta_j \theta_i &= \theta_j \theta_i \theta_j, \;\;\; i = j-1, \\
    \theta_i \theta_j &= \theta_j \theta_i, \;\;\;\;\;\; i < j-1. \label{eqn:moore3b}
\end{align}
These are again the Moore relations.

The relations induced from \eqref{eqn:mixed1}--\eqref{eqn:mixed3} are as follows:
\begin{align}\label{eqn:mixed1b}
    \theta_i s_j &= \begin{cases}
        s_j \theta_i, & i < j, \\
        s_{i-1}, & i = j, \\
        s_{i}, & i = j+1 \\        
        s_j \theta_{i-1}, & i>j+1,
    \end{cases} \\ \label{eqn:mixed2b}
    \theta_i d_j &= \begin{cases}
        d_j \theta_i, & i < j-1,\\
        d_i \theta_{i+1} \theta_i, & i = j-1,\\
        d_{i+1}\theta_i \theta_{i+1}, & i=j,\\
        d_j \theta_{i+1}, & i>j,
    \end{cases} \\
    d_i \theta_i &= d_i.\label{eqn:mixed3b}
\end{align}

\begin{rmk}
    As noted above, \eqref{eqn:simp1}--\eqref{eqn:simp3} look like the simplicial relations for face maps $d_i^n$ and degeneracy maps $s_i^n$, except that the generators we have given for $\Phi_\ast$ do not include the final face maps $d_n^n$. This situation is remedied by defining
    \begin{equation} \label{eqn:dn}
    d_n^n = d_0^n \theta_1^n \cdots \theta_{n-1}^n.
    \end{equation}
    Then one can use \eqref{eqn:simp1}--\eqref{eqn:mixed3b} to show that \eqref{eqn:simp2}, \eqref{eqn:simp3}, and \eqref{eqn:mixed2b} hold for $d_n^n$ as well.
\end{rmk}

\subsection{\texorpdfstring{$\Gamma$-sets}{Gamma-sets}} \label{sec:gamma}
Recall that a $\Gamma$-set is defined to be a functor $\Phi_\ast \to \Set$. The following theorem immediately follows from the results of Section \ref{subsec:gen_rel_Phistar}.
\begin{thm}\label{thm:finstar}
A $\Gamma$-set is equivalent to a simplicial set $X_\bullet$, equipped with an action of the symmetric group $S_n$ on $X_n$ for each $n$, such that the relations \eqref{eqn:mixed1b}--\eqref{eqn:dn} are satisfied.
\end{thm}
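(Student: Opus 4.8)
The plan is to obtain Theorem~\ref{thm:finstar} as a direct unwinding of the generator-and-relation description of $\Phi_\ast$ developed in Section~\ref{subsec:gen_rel_Phistar}. The essential input is that the Lemma of that section (the maps $s_i^n, d_i^n, \theta_i^n$ generate $\Phi_\ast$), combined with the faithfulness of $P$ and Grandis's presentation of $\Phi$, exhibits $\Phi_\ast$ as the category \emph{presented} by those generators subject precisely to the relations \eqref{eqn:simp1}--\eqref{eqn:mixed3b}. Granting this presentation, a functor $X \colon \Phi_\ast \to \Set$ is nothing more than a choice of set $X_n := X(\fin{n})$ for each object, together with set maps realizing each generator, subject only to the requirement that these maps satisfy the images of all the relations. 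The whole argument is then a bookkeeping exercise: regroup this generator-relation data into the three packages named in the statement and check that the regrouping is reversible.

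For the forward direction I would start from a $\Gamma$-set $X$ and read off the degeneracies $s_i = X(s_i^n)\colon X_n \to X_{n+1}$, the lower face maps $d_i = X(d_i^n)\colon X_n \to X_{n-1}$ for $0 \le i \le n-1$, and the permutation maps $\theta_i = X(\theta_i^n)\colon X_n \to X_n$ for $1 \le i \le n-1$. Then I would identify the three groupings of the relations. First, the $n-1$ maps $\theta_1,\dots,\theta_{n-1}$ together with the Moore relations \eqref{eqn:moore1b}--\eqref{eqn:moore3b} are exactly the Coxeter presentation of the symmetric group $S_n$ by adjacent transpositions, so they assemble into an action of $S_n$ on $X_n$. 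Second, the maps $s_i,d_i$ with \eqref{eqn:simp1}--\eqref{eqn:simp3} are the simplicial identities, missing only the top face $d_n^n$. Third, the mixed relations \eqref{eqn:mixed1b}--\eqref{eqn:mixed3b} are precisely the compatibilities between the $S_n$-actions and the simplicial maps.

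The step requiring genuine care — and the main obstacle — is the absence of the top face maps $d_n^n$ from the list of generators, so that the second grouping yields an \emph{honest} simplicial set only once such maps are supplied. Here I would invoke \eqref{eqn:dn} to define $d_n := d_0\,\theta_1 \cdots \theta_{n-1}$ and appeal to the remark following \eqref{eqn:dn}: since that identity and the deductions from it hold already inside $\Phi_\ast$, applying $X$ shows that \eqref{eqn:simp2}, \eqref{eqn:simp3}, and \eqref{eqn:mixed2b} persist with $d_n^n$ included. This promotes $(X_n, s_i, d_i)$ to a simplicial set $X_\bullet$ and rounds out the compatibility relations, so the data is exactly that of the statement. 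In the reverse direction the corresponding point of vigilance is that \eqref{eqn:dn} must be \emph{imposed} on the input: the top face map of the given simplicial set is required to coincide with $d_0\,\theta_1\cdots\theta_{n-1}$, since otherwise the reconstructed functor would conflict with the ambient simplicial structure.

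Finally I would give the inverse construction and verify the two assignments are mutually inverse. Starting from a simplicial set $X_\bullet$ with $S_n$-actions satisfying \eqref{eqn:mixed1b}--\eqref{eqn:dn}, define a functor on generators by sending $s_i^n, d_i^n$ to the degeneracy and lower face maps of $X_\bullet$ and $\theta_i^n$ to the action of the transposition exchanging $i$ and $i+1$. By the presentation of $\Phi_\ast$ this extends uniquely to a functor $\Phi_\ast \to \Set$ precisely because every relation in \eqref{eqn:simp1}--\eqref{eqn:mixed3b} holds: the simplicial identities by the hypothesis that $X_\bullet$ is a simplicial set, the Moore relations because we are given an $S_n$-action, and the mixed relations by assumption. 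Since in each direction the generators are sent to the same underlying maps of sets, the two constructions invert one another, establishing the claimed equivalence.
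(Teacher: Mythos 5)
Your proposal is correct and follows exactly the route the paper intends: the paper's entire proof is the single sentence that the theorem ``immediately follows from the results of Section 5.4,'' and your write-up is a faithful unwinding of that — identifying the Moore relations with the Coxeter presentation of $S_n$, the relations \eqref{eqn:simp1}--\eqref{eqn:simp3} with the simplicial identities minus the top face, and recovering $d_n^n$ via \eqref{eqn:dn} as in the paper's remark. You in fact supply more detail than the paper does, including the correct observation that \eqref{eqn:dn} must be imposed as a condition in the reverse direction.
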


\begin{rmk}\label{rmk:cutcomp}
The fact that any $\Gamma$-set has an underlying simplicial set can be understood conceptually by the fact that a functor $\Phi_\ast \to \Set$ can be pulled back along the functor $\Cut: \Delta^{\op} \to \Phi_\ast$. One can see that this pullback agrees with the simplicial structure described in Section \ref{subsec:gen_rel_Phistar} by directly applying $\Cut$ to the coface and codegeneracy maps in $\Delta$, and seeing that their images are exactly the face and degeneracy maps in $\Phi_\ast$.
\end{rmk}

\begin{rmk}
We warn the reader that there are structures on simplicial sets that superficially seem similar to $\Gamma$-sets but are in fact different. A \emph{symmetric simplicial set} is defined (see, e.g., \cite{Grandis}) as a functor $X:\Phi^\op\to \Set$. Similarly, there are \emph{$\Delta\mathfrak{S}$-sets} where $\Delta\mathfrak{S}$ is the \emph{symmetric crossed simplicial group} (see, e.g., \cite[Theorem 6.1.4]{Loday}, \cite[Example 6]{fiedorowicz-loday}). In both of these situations, one has a simplicial set $X_\bullet$ where $X_n$ carries an action of the symmetric group $S_{n+1}$.
	
As a point of contrast between $\Gamma$-sets and these other notions, one can see (either by directly applying $\Cut$ or by deducing from the relations in Section \ref{subsec:gen_rel_Phistar}) that $d_0^1 = d_1^1$ in $\Phi_\ast$, whereas the above two structures admit examples that do not satisfy this equation. For example, the nerve of a groupoid has the structure of a symmetric simplicial set \cite{Grandis:Higher} but does not satisfy $d_0^1 = d_1^1$ in general. 
\end{rmk}

\subsection{Commutative pseudomonoids in \texorpdfstring{$\Span$}{Span}}

This subsection is devoted to the proof of the following result.

\begin{thm}\label{thm:equiv_phistar_comm}
    Let $X_\bullet$ be a $2$-Segal set. Then there is a one-to-one correspondence between $\Gamma$-structures on $X_\bullet$ and equivalence classes of commutative structures on the corresponding pseudomonoid in $\Span$.
\end{thm}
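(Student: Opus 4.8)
The plan is to reduce the entire correspondence to a single piece of data, the action of the generating transposition $\theta_1^2 \in S_2$ on $X_2$, and then to match the two coherence equations with the defining relations of a $\Gamma$-set from Theorem~\ref{thm:finstar}. First I would unwind the $2$-morphism $\gamma$. Composing the braiding with the multiplication and simplifying the pullback (using that $\tilde\rho$ is a bijection) shows that $\mu \circ \rho_{X,X}$ is the span $(X_1)^2 \xleftarrow{(d_0,d_2)} X_2 \xrightarrow{d_1} X_1$, i.e.\ the multiplication with its two inputs exchanged. Hence an invertible $2$-morphism $\gamma : \mu \Rightarrow \mu \circ \rho_{X,X}$ is exactly a bijection $\theta : X_2 \to X_2$ satisfying $d_0 \theta = d_2$, $d_2 \theta = d_0$, and $d_1 \theta = d_1$. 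Comparing with the explicit generators of $\Phi_\ast$ from Section~\ref{subsec:gen_rel_Phistar}, these leg conditions are precisely relation \eqref{eqn:mixed3b} together with the defining relation \eqref{eqn:dn} for a candidate generator $\theta_1^2$; thus the assignment $\gamma \leftrightarrow \theta = \theta_1^2$ already realizes the correspondence at the level of the basic datum.

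Next I would match the coherence conditions. Composing $\gamma$ with itself produces the automorphism $\theta^2$ of $X_2$, while the right-hand side of the symmetry equation \eqref{eqn:symmetry} is built from the syllepsis $v_{X,X}$, which in $\Span$ is the canonical identification of the double braiding with the identity; tracking apexes shows that \eqref{eqn:symmetry} holds if and only if $\theta^2 = \id$, i.e.\ the Moore relation \eqref{eqn:moore1b} for $S_2$. For the hexagon equation \eqref{eqn:hexagon}, which lives over $(X_1)^3$, I would identify every $2$-morphism appearing — the associator $a$, the two copies of $\gamma$, the morphism $\rho_{\id_X,\mu}$, and the hexagonator $R_{X|XX}$ — with explicit maps on the $2$-Segal apex $X_2 \times_{X_1} X_2 \cong X_3$, using the graphical calculus of Section~\ref{sec:faceanddegen}. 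The two sides then compute two automorphisms of $X_3$ assembled from $\theta$ and the $2$-Segal associator, and the hexagon becomes the braid relation \eqref{eqn:moore2b} for the induced transpositions $\theta_1^3,\theta_2^3$.

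To pass from a commutative structure to a full $\Gamma$-structure, I would define the higher transpositions $\theta_i^n : X_n \to X_n$ exactly as the extra degeneracies were built in Section~\ref{sec:frobeniusspan}: viewing $\psi \in X_n$ as a triangulated $(n+1)$-gon, choose (using $2$-Segality) a triangulation containing the triangle on the vertices $i-1,i,i+1$, apply $\theta$ to that triangle, and re-triangulate; well-definedness follows from Proposition~\ref{prop:subdivision}. I would then verify the relations of Theorem~\ref{thm:finstar}: the mixed relations \eqref{eqn:mixed1b}, \eqref{eqn:mixed2b}, \eqref{eqn:mixed3b} follow from the graphical calculus together with the three identities $d_0\theta=d_2$, $d_1\theta=d_1$, $d_2\theta=d_0$; the far-commutativity relation \eqref{eqn:moore3b} is automatic because $\theta_i^n$ and $\theta_j^n$ with $|i-j|>1$ modify disjoint triangles of the polygon; the involution relation \eqref{eqn:moore1b} reduces to $\theta^2=\id$; and the braid relation \eqref{eqn:moore2b} reduces to the $n=3$ case, which is the content of the hexagon. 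Conversely, since $\theta_i^n$ is determined by $\theta = \theta_1^2$ through this construction, and $\theta$ is determined by $\gamma$, the two assignments are mutually inverse, so the bijection on equivalence classes follows.

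The main obstacle is the hexagon equation. Unlike the symmetry equation, it interleaves the associator, the two braiding $2$-morphisms, and the hexagonator, each of which must be written as an explicit map on an iterated pullback and composed in the correct order. The crux is to check that, after identifying all apexes with $X_3$ via $2$-Segality and using that the $2$-Segal isomorphisms fix the exterior edges of the polygon, the two composites of \eqref{eqn:hexagon} reduce exactly to the two sides of $\theta_1\theta_2\theta_1 = \theta_2\theta_1\theta_2$. Everything else — far commutativity and the mixed relations — is comparatively routine graphical bookkeeping.
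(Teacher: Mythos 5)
Your overall architecture matches the paper's: extract $\theta=\theta_1^2$ from $\gamma$, match the symmetry equation with $\theta^2=\id$, interpret the hexagon at the level of $X_3$, propagate to $\theta_i^n$ by applying $\theta$ to a single triangle of a chosen triangulation, and verify the relations of Theorem~\ref{thm:finstar} by reducing to the $n=3$ case or to disjoint triangles. However, there is a concrete mis-step in your treatment of the hexagon. The hexagon equation \eqref{eqn:hexagon} has \emph{one} instance of $\gamma$ on its left-hand composite and \emph{two} on its right-hand composite; after identifying all apexes with $X_3$ and collapsing the associators, $\rho_{\id_X,\mu}$, and $R_{X|XX}$ to identities, it reads $c=\theta_2^3\theta_1^3$, where $c$ is the ``global'' map determined by $d_1^3c=\theta d_2^3$ and $d_3^3c=d_0^3$. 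Unpacking this gives the pair of identities $d_1^3\theta_2^3\theta_1^3=\theta_1^2 d_2^3$ and $d_3^3\theta_2^3\theta_1^3=d_0^3$, i.e.\ instances of the mixed relation \eqref{eqn:mixed2b} and of \eqref{eqn:dn} at $n=3$ --- \emph{not} the braid relation \eqref{eqn:moore2b}, which has three transpositions on each side and cannot be what a one-$\gamma$-versus-two-$\gamma$ equation encodes. The braid relation $\theta_1^3\theta_2^3\theta_1^3=\theta_2^3\theta_1^3\theta_2^3$ is true, but it requires a separate argument: one checks that both sides have the same images under $d_1^3$ and $d_3^3$ (using the face compatibilities, which in turn use the hexagon consequences) and then invokes the $2$-Segal isomorphism $\hat{\mathcal{T}}_{13}$ to conclude equality. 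As written, your proposal asserts an identification that is false and omits the step that actually delivers \eqref{eqn:moore2b}.

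Two smaller omissions: you never address relation \eqref{eqn:dn} for $n\geq 4$, which does not follow from ``routine graphical bookkeeping'' on a single triangle --- in the paper it requires an induction on $n$ combining the simplicial identities, \eqref{eqn:mixed2b}--\eqref{eqn:mixed3b}, and the $2$-Segal property applied to the pair $(d_0^{n-1},d_{n-2}^{n-1})$. And the degeneracy compatibilities $\theta s_0^1=s_1^1$, $\theta s_1^1=s_0^1$ in low degree are not consequences of the leg conditions alone; they use the unitality pullbacks \eqref{eqn:unitality}. With these repairs your argument would coincide with the paper's proof.
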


\subsubsection*{A \texorpdfstring{$\Gamma$}{Gamma}-structure determines a commutative structure}

Suppose that $X_\bullet$ is a $2$-Segal $\Gamma$-set. For simplicity of notation, we will write $\theta = \theta_1^2$. From \eqref{eqn:moore1b}, \eqref{eqn:mixed3b}, and \eqref{eqn:dn}, we have that $d_0^2 \theta = d_2^2$, $d_1^2 \theta = d_1^2$, and $d_2^2 \theta = d_0^2$. Thus $\theta: X_2 \to X_2$ defines a map of spans (i.e.\ a $2$-morphism in $\Span$) from the multiplication morphism $\mu$ (see \eqref{eqn:mu}) to $\mu \circ \rho_{X,X}$, where the latter is canonically identified with the span
\begin{equation}\label{eqn:murho}
\begin{tikzcd}
	X_1 \times X_1 &[2em] \arrow[l,"{(d_0,d_2)}"'] X_2 \arrow[r,"d_1"] & X_1.
\end{tikzcd}
\end{equation}
We will take $\theta$ to play the role of $\gamma$ in the definition of a commutative pseudomonoid. The symmetry condition in Section \ref{subsec:comm_pseudomonoids} follows from the fact that $\theta^2 = \id$.

Recall from Section \ref{sec:2Segal} that we can use the $2$-Segal conditions to identify $X_3$ with either $X_2 \bitimes{d_1}{d_2} X_2$ or $X_2 \bitimes{d_0}{d_1} X_2$, corresponding to the two triangulations of the square in Figure \ref{fig:square}. These identifications allow us to visualize the maps $\theta_1^3$ and $\theta_2^3$ as in Figure \ref{fig:theta1theta2}.
\begin{figure}[th]
\begin{center}
\begin{tikzpicture}[scale=0.8]
    \begin{scope}
        \draw (-135:1) node[vertex] (0) {} node[anchor=north east] {0}
        -- (-45:1) node[vertex] (3) {} node[anchor=north west] {3} 
        -- (45:1) node[vertex] (2) {} node[anchor=south west] {2}
        -- (135:1) node[vertex] (1) {} node[anchor=south east] {1};
        \draw (0) -- (1);
        \draw (0) -- (2);
        \node at (135:.4) {$\psi_3$};
        \node at (-45:.4) {$\psi_1$};
    \end{scope}
    \node at (2,0) {$\xmapsto[\phantom{xxxxxxx}]{\theta_1^3}$};
    \begin{scope}[shift={(4,0)}]
        \draw (135:1) node[vertex] (1) {} node[anchor=south east] {1}
        -- (-135:1) node[vertex] (0) {} node[anchor=north east] {0}
        -- (-45:1) node[vertex] (3) {} node[anchor=north west] {3}
        -- (45:1) node[vertex] (2) {} node[anchor=south west] {2};
        \draw (1) -- (2);
        \draw (0) -- (2);
        \node at (135:.4) {${\scriptstyle \theta \psi_3}$};
        \node at (-45:.4) {$\psi_1$};
    \end{scope}   
    \begin{scope}[shift={(9,0)}]
        \draw (135:1) node[vertex] (1) {} node[anchor=south east] {1}
        -- (-135:1) node[vertex] (0) {} node[anchor=north east] {0}
        -- (-45:1) node[vertex] (3) {} node[anchor=north west] {3}
        -- (45:1) node[vertex] (2) {} node[anchor=south west] {2};
        \draw (1) -- (2);
        \draw (1) -- (3);
        \node at (45:.4) {$\psi_0$};
        \node at (-135:.4) {$\psi_2$};
    \end{scope}    
    \node at (11,0) {$\xmapsto[\phantom{xxxxxxx}]{\theta_2^3}$};
    \begin{scope}[shift={(13,0)}]
        \draw (45:1) node[vertex] (2) {} node[anchor=south west] {2}
        -- (135:1) node[vertex] (1) {} node[anchor=south east] {1}
        -- (-135:1) node[vertex] (0) {} node[anchor=north east] {0}
        -- (-45:1) node[vertex] (3) {} node[anchor=north west] {3};
        \draw (2) -- (3);
        \draw (1) -- (3);
        \node at (45:.4) {$\scriptstyle \theta \psi_0$};
        \node at (-135:.4) {$\psi_2$};
    \end{scope}        
\end{tikzpicture}
\end{center}
    \caption{The maps $\theta_1^3$ and $\theta_2^3$ are uniquely determined by the equations $d_1^3 \theta_1^3 = d_1^3$, $d_0^3 \theta_2^3 = \theta d_0^3$,
$d_3^3 \theta_1^3 = \theta d_3^3$, and $d_2^3 \theta_2^3 = d_2^3$.}
    \label{fig:theta1theta2}
\end{figure}
There is another map $c: X_3 \to X_3$ that will be useful to define. It is given by $d_1^3 c = \theta d_2^3$ and $d_3 c = d_0$; see Figure \ref{fig:c}.
\begin{figure}[th]
\begin{center}
\begin{tikzpicture}[scale=0.8]
    \begin{scope}
        \draw (135:1) node[vertex] (1) {} node[anchor=south east] {1}
        -- (-135:1) node[vertex] (0) {} node[anchor=north east] {0}
        -- (-45:1) node[vertex] (3) {} node[anchor=north west] {3}
        -- (45:1) node[vertex] (2) {} node[anchor=south west] {2};
        \draw (1) -- (2);
        \draw (1) -- (3);
        \node at (45:.4) {$\psi_0$};
        \node at (-135:.4) {$\psi_2$};
    \end{scope}  
    \node at (2,0) {$\xmapsto[\phantom{xxxxxxx}]{c}$};
    \begin{scope}[shift={(4,0)}]
        \draw (-135:1) node[vertex] (0) {} node[anchor=north east] {0}
        -- (-45:1) node[vertex] (3) {} node[anchor=north west] {3} 
        -- (45:1) node[vertex] (2) {} node[anchor=south west] {2}
        -- (135:1) node[vertex] (1) {} node[anchor=south east] {1};
        \draw (0) -- (1);
        \draw (0) -- (2);
        \node at (135:.4) {$\psi_0$};
        \node at (-45:.4) {${\scriptstyle \theta\psi_2}$};
    \end{scope} 
\end{tikzpicture}
\end{center}
    \caption{The map $c: X_3 \to X_3$, given by $d_1^3 c = \theta d_2^3$ and $d_3 c = d_0$.}
    \label{fig:c}
\end{figure}

We now turn to the {hexagon equation} (\ref{eqn:hexagon}). We first observe that canonical identifications can be made such that the $2$-morphisms $\rho_{1,\mu}$ and $R_{X|XX}$ correspond to identity maps of spans. This allows us to collapse the corresponding maps in the hexagon equation, so that it actually has the form of a hexagon.

Next, we observe that every composite morphism in the hexagon equation can be canonically identified with a span of the form
\begin{equation}
\begin{tikzcd}
	X_1 \times X_1 \times X_1 &[1em] \arrow[l] X_3 \arrow[r,"e_{\out}"] & X_1,
\end{tikzcd}
\end{equation}
where the map on the left is some permutation of $(e_1,e_2,e_3)$, and where there is a chosen identification of $X_3$ with either $X_2 \bitimes{d_1}{d_2} X_2$ or $X_2 \bitimes{d_0}{d_1} X_2$. With this understanding, we can realize the hexagon equation as requiring the commutativity of the diagram of triangulated squares in Figure \ref{fig:hexagon}.
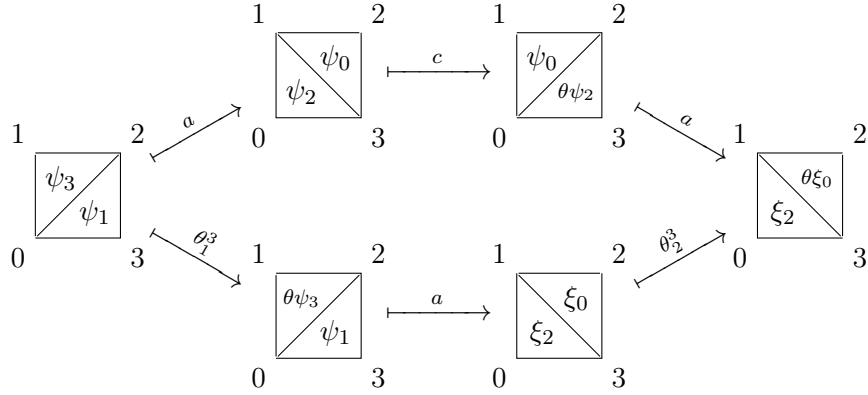
\begin{figure}[th]
\begin{center}
\begin{tikzpicture}[scale=0.8]
    \begin{scope}
        \draw (-135:1) node[vertex] (0) {} node[anchor=north east] {0}
        -- (-45:1) node[vertex] (3) {} node[anchor=north west] {3} 
        -- (45:1) node[vertex] (2) {} node[anchor=south west] {2}
        -- (135:1) node[vertex] (1) {} node[anchor=south east] {1};
        \draw (0) -- (1);
        \draw (0) -- (2);
        \node at (135:.4) {$\psi_3$};
        \node at (-45:.4) {$\psi_1$};
    \end{scope}
    \node[rotate=30] at (2,1) {$\xmapsto[\phantom{xxxxxxx}]{a}$};
    \begin{scope}[shift={(4,2)}]
        \draw (135:1) node[vertex] (1) {} node[anchor=south east] {1}
        -- (-135:1) node[vertex] (0) {} node[anchor=north east] {0}
        -- (-45:1) node[vertex] (3) {} node[anchor=north west] {3}
        -- (45:1) node[vertex] (2) {} node[anchor=south west] {2};
        \draw (1) -- (2);
        \draw (1) -- (3);
        \node at (45:.4) {$\psi_0$};
        \node at (-135:.4) {$\psi_2$};
    \end{scope}  
    \node at (6,2) {$\xmapsto[\phantom{xxxxxxx}]{c}$};
    \begin{scope}[shift={(8,2)}]
        \draw (-135:1) node[vertex] (0) {} node[anchor=north east] {0}
        -- (-45:1) node[vertex] (3) {} node[anchor=north west] {3} 
        -- (45:1) node[vertex] (2) {} node[anchor=south west] {2}
        -- (135:1) node[vertex] (1) {} node[anchor=south east] {1};
        \draw (0) -- (1);
        \draw (0) -- (2);
        \node at (135:.4) {$\psi_0$};
        \node at (-45:.4) {${\scriptstyle \theta\psi_2}$};
    \end{scope}
    \node[rotate=-30] at (10,1) {$\xmapsto[\phantom{xxxxxxx}]{a}$};
    \begin{scope}[shift={(12,0)}]
        \draw (45:1) node[vertex] (2) {} node[anchor=south west] {2}
        -- (135:1) node[vertex] (1) {} node[anchor=south east] {1}
        -- (-135:1) node[vertex] (0) {} node[anchor=north east] {0}
        -- (-45:1) node[vertex] (3) {} node[anchor=north west] {3};
        \draw (2) -- (3);
        \draw (1) -- (3);
        \node at (45:.4) {${\scriptstyle \theta \xi_0}$};
        \node at (-135:.4) {$\xi_2$};
    \end{scope} 
    \node[rotate=-30] at (2,-1) {$\xmapsto[\phantom{xxxxxxx}]{\theta_1^3}$};
    \begin{scope}[shift={(4,-2)}]
        \draw (-135:1) node[vertex] (0) {} node[anchor=north east] {0}
        -- (-45:1) node[vertex] (3) {} node[anchor=north west] {3} 
        -- (45:1) node[vertex] (2) {} node[anchor=south west] {2}
        -- (135:1) node[vertex] (1) {} node[anchor=south east] {1};
        \draw (0) -- (1);
        \draw (0) -- (2);
        \node at (135:.4) {${\scriptstyle \theta \psi_3}$};
        \node at (-45:.4) {$\psi_1$};
    \end{scope}
    \node at (6,-2) {$\xmapsto[\phantom{xxxxxxx}]{a}$};
    \begin{scope}[shift={(8,-2)}]
        \draw (135:1) node[vertex] (1) {} node[anchor=south east] {1}
        -- (-135:1) node[vertex] (0) {} node[anchor=north east] {0}
        -- (-45:1) node[vertex] (3) {} node[anchor=north west] {3}
        -- (45:1) node[vertex] (2) {} node[anchor=south west] {2};
        \draw (1) -- (2);
        \draw (1) -- (3);
        \node at (45:.4) {$\xi_0$};
        \node at (-135:.4) {$\xi_2$};
    \end{scope}
    \node[rotate=30] at (10,-1) {$\xmapsto[\phantom{xxxxxxx}]{\theta_2^3}$};
\end{tikzpicture}
\end{center}
    \caption{The hexagon equation.}
    \label{fig:hexagon}
\end{figure}
If we identify each node of Figure \ref{fig:hexagon} with $X_3$, then each associator becomes the identity map, and the hexagon equation becomes $c = \theta_2^3 \theta_1^3$. By the definition of $c$, this is equivalent to the pair of equations
\begin{align}\label{eqn:hexagonequivalent}
    d_1^3 \theta_2^3 \theta_1^3 &= \theta_1^2 d_2^3, & d_3^3 \theta_2^3 \theta_1^3 &= d_0^3,
\end{align}
which hold by \eqref{eqn:mixed2b} and \eqref{eqn:dn}.

\subsubsection*{A commutative structure determines a \texorpdfstring{$\Gamma$}{Gamma}-structure}

Suppose that $X_\bullet$ is a $2$-Segal set for which the corresponding pseudomonoid in $\Span$ is equipped with a commutative structure, i.e.\ a map of spans $\gamma: \mu \Rightarrow \mu \circ \rho_{X,X}$ satisfying the {symmetry} (\ref{eqn:symmetry}) and {hexagon} (\ref{eqn:hexagon}) equations. In low degrees, the construction of a $\Gamma$-structure is simply the reverse of the other direction.

We define $\theta = \theta_1^2: X_2 \to X_2$ to be the map obtained from $\gamma$ upon identifying $\mu \circ \rho_{X,X}$ with the span \eqref{eqn:murho}.
\begin{lem}
    \begin{enumerate}
        \item $(\theta_1^2)^2 = \id$.
        \item The following face map compatibility conditions hold:
            \begin{itemize}
                \item $d_0^2 \theta_1^2 = d_2^2$,
                \item $d_1^2 \theta_1^2 = d_1^2$,
                \item $d_2^2 \theta_1^2 = d_0^2$.
            \end{itemize}
        \item The following degeneracy map compatibility conditions hold:
            \begin{itemize}
                \item $\theta_1^2 s_0^1 = s_1^1$,
                \item $\theta_1^2 s_1^1 = s_0^1$.
            \end{itemize}
    \end{enumerate}
\end{lem}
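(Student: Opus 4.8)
The three parts are essentially independent, and I would treat them in the order (2), (1), (3), since each later part uses the earlier ones.

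Part (2) requires no real work: by construction $\theta_1^2$ is the underlying map of the $2$-morphism $\gamma\colon \mu \Rightarrow \mu\circ\rho_{X,X}$, where the target has been identified with the span \eqref{eqn:murho}. A $2$-morphism of spans in $\Span$ is exactly a map of apex sets commuting with both legs, so the equality of left legs gives $(d_0^2,d_2^2)\circ\theta_1^2 = (d_2^2,d_0^2)$, i.e.\ $d_0^2\theta_1^2 = d_2^2$ and $d_2^2\theta_1^2 = d_0^2$, while the equality of right legs gives $d_1^2\theta_1^2 = d_1^2$. These are precisely the three claimed face identities, so part (2) holds by definition of $\theta_1^2$.

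For part (1), the plan is to read the symmetry equation \eqref{eqn:symmetry} off at the level of underlying maps of sets. Both sides are $2$-morphisms from $\mu$ to the composite $\mu\circ\rho_{X,X}\circ\rho_{X,X}$, whose apex is again $X_2$ carrying the same legs as $\mu$ (the two braidings cancel). The crucial observation is that in $\Span$ every piece of coherence data occurring in \eqref{eqn:symmetry} — the braiding $2$-morphisms, the tensorators, and the syllepsis $v_{X,X}$ — has underlying map the identity on the relevant set. Hence the right-hand side, assembled from $v_{X,X}^{-1}$ and braidings, is the identity map $X_2\to X_2$, whereas the left-hand side is two successive applications of $\gamma$, the second whiskered by $\rho_{X,X}$. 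Since whiskering by $\rho_{X,X}$ (whose apex has identity left leg) leaves the underlying map unchanged, each application contributes a factor of $\theta_1^2$, so the left-hand side is $\theta_1^2\circ\theta_1^2$. The equation therefore reads $(\theta_1^2)^2 = \id$. The only care needed is the bookkeeping confirming that the whiskered second copy of $\gamma$ really contributes $\theta_1^2$ rather than some conjugate.

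Part (3) is where a genuine argument is needed, and it is the step I expect to be the main obstacle. I would prove $\theta_1^2 s_0^1 = s_1^1$ directly and then deduce $\theta_1^2 s_1^1 = s_0^1$ by applying $\theta_1^2$ and invoking $(\theta_1^2)^2=\id$ from part (1). Using the face identities of part (2) together with the simplicial relations $d_2^2 s_0^1 = s_0^0 d_1^1$ and $d_1^2 s_0^1 = \id$, one computes $d_0^2\bigl(\theta_1^2 s_0^1 x\bigr) = d_2^2 s_0^1 x = s_0^0 d_1^1 x$, which lies in the image of $s_0^0$. Now the first unitality pullback square of \eqref{eqn:unitality} — available because $X_\bullet$ is $2$-Segal — characterizes the image of $s_1^1$ as exactly those $\psi\in X_2$ with $d_0^2\psi$ in the image of $s_0^0$, and recovers the preimage as $d_1^2\psi$. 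Therefore $\theta_1^2 s_0^1 x$ lies in the image of $s_1^1$ and equals $s_1^1\bigl(d_1^2\theta_1^2 s_0^1 x\bigr) = s_1^1\bigl(d_1^2 s_0^1 x\bigr) = s_1^1 x$, as desired. The subtlety to flag is that merely matching the three faces of $\theta_1^2 s_0^1 x$ and $s_1^1 x$ would \emph{not} force their equality in $X_2$; it is essential to use the unitality pullback to pin down the element uniquely, which is exactly where $2$-Segality enters.
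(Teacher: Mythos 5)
Your proposal is correct and follows the same route as the paper, whose proof is just the three-sentence summary: part (1) from the symmetry equation, part (2) from $\gamma$ being a map of spans, and part (3) from the unitality pullbacks \eqref{eqn:unitality} combined with the face compatibilities. Your write-up simply supplies the details (in particular the correct use of the first unitality square to pin down $\theta_1^2 s_0^1 x$ as $s_1^1(d_1^2\theta_1^2 s_0^1 x)$) that the paper leaves implicit.
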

\begin{proof}
    The first equation holds as a result of symmetry of $\gamma$. The face map compatibility conditions hold as a result of the fact that $\gamma$ is a map of spans. The degeneracy map compatibility conditions follow from the unitality conditions \eqref{eqn:unitality} and the face map compatibility conditions.
\end{proof}
Next, we define $\theta_1^3, \theta_2^3: X_3 \to X_3$ as in Figure \ref{fig:theta1theta2}, and we define $c: X_3 \to X_3$ as in Figure \ref{fig:c}. Then the hexagon equation has the form in Figure \ref{fig:hexagon} and is equivalent to the pair of equations \eqref{eqn:hexagonequivalent}.
\begin{lem}\label{lem:theta3}
    The maps $\theta_1^3$ and $\theta_2^3$ satisfy all applicable cases of relations \eqref{eqn:moore1b}--\eqref{eqn:dn}.
\end{lem}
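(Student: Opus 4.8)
The plan is to verify each applicable relation by reducing it to the degree-two data already in hand — namely $(\theta_1^2)^2 = \id$ together with the face- and degeneracy-compatibilities for $\theta := \theta_1^2$ from the previous lemma — supplemented by the two defining equations of $\theta_1^3$ and $\theta_2^3$ read off from Figure \ref{fig:theta1theta2} and the hexagon identities \eqref{eqn:hexagonequivalent}. The tool that makes this reduction possible is the pair of $2$-Segal isomorphisms $\hat{\mathcal{T}}_{13}$ and $\hat{\mathcal{T}}_{02}$ of \eqref{eqn:taco}: since each identifies $X_3$ with a fiber product of two copies of $X_2$ over $X_1$, a map with codomain $X_3$ is determined by its composites with the pair $(d_3^3, d_1^3)$ (via $\hat{\mathcal{T}}_{13}$) or with $(d_2^3, d_0^3)$ (via $\hat{\mathcal{T}}_{02}$). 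Thus every relation among the $\theta_i^3$ with codomain $X_3$ becomes a pair of relations with codomain $X_2$, and the relations already landing in $X_2$ (the face-type relations \eqref{eqn:mixed2b}, \eqref{eqn:mixed3b} and the identity \eqref{eqn:dn}) can be compared directly against the degree-two identities.

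I would first dispatch the relations that are essentially immediate. The two defining equations $d_1^3\theta_1^3 = d_1^3$ and $d_2^3\theta_2^3 = d_2^3$ are exactly the instances of \eqref{eqn:mixed3b} for $\theta_1^3$ and $\theta_2^3$. The involutions \eqref{eqn:moore1b} follow by the component principle: from $d_3^3\theta_1^3 = \theta d_3^3$ and $d_1^3\theta_1^3 = d_1^3$ one computes $d_3^3(\theta_1^3)^2 = \theta^2 d_3^3 = d_3^3$ and $d_1^3(\theta_1^3)^2 = d_1^3$, so $(\theta_1^3)^2 = \id$ by $\hat{\mathcal{T}}_{13}$, and symmetrically $(\theta_2^3)^2 = \id$ using $\hat{\mathcal{T}}_{02}$ and $d_0^3\theta_2^3 = \theta d_0^3$. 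Among the degree-three cases of \eqref{eqn:mixed2b}, the case $i=1$, $j=0$ is the defining equation $d_0^3\theta_2^3 = \theta d_0^3$ and the case $i=1$, $j=2$ is precisely the first equation of \eqref{eqn:hexagonequivalent}, so both are available; the remaining case $i=j=1$ is handled the same way after passing through a $2$-Segal isomorphism. The degeneracy relations \eqref{eqn:mixed1b} reduce to the degeneracy compatibilities $\theta s_0^1 = s_1^1$ and $\theta s_1^1 = s_0^1$ of the previous lemma together with the graphical description of degeneracies in Figure \ref{fig:degeneracy}.

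The substantive case is the braid relation \eqref{eqn:moore2b}, $\theta_1^3\theta_2^3\theta_1^3 = \theta_2^3\theta_1^3\theta_2^3$. Using the two involutions, this relation is equivalent to $(\theta_2^3\theta_1^3)^3 = \id$, that is, to $c^3 = \id$ for the map $c$ of Figure \ref{fig:c}, since the hexagon gives $c = \theta_2^3\theta_1^3$. I would prove $c^3 = \id$ by computing the $(d_3^3, d_1^3)$-components of $c^3$ and reducing them, via the face compatibilities and $\theta^2 = \id$, to the components of $\id$. This is the step where the hexagon does genuine work: each generator $\theta_i^3$ is defined relative to a single triangulation of the square, so tracking the triple composite $c^3$ forces one to move between the two triangulations, and consistency of those passages is exactly what \eqref{eqn:hexagonequivalent} guarantees. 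Finally, the degree-three case of \eqref{eqn:dn}, namely $d_3^3 = d_0^3\theta_1^3\theta_2^3$, follows from the second equation of \eqref{eqn:hexagonequivalent} together with the involutions.

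I expect the coordination of the two triangulation pictures in the braid relation to be the main obstacle. Every other relation either is a defining equation, is one of the hexagon identities, or reduces after a single application of a $2$-Segal isomorphism to a degree-two statement; the braid relation instead requires passing back and forth between $\hat{\mathcal{T}}_{13}$ and $\hat{\mathcal{T}}_{02}$ repeatedly, and it is precisely the hexagon coherence that makes these passages compatible.
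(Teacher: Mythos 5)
Your overall strategy is the paper's: use the $2$-Segal isomorphisms $\hat{\mathcal{T}}_{13}$, $\hat{\mathcal{T}}_{02}$ to determine a map into $X_3$ by its $(d_3,d_1)$- or $(d_2,d_0)$-components, and dispatch the involutions, the face compatibilities, the degeneracy compatibilities, and \eqref{eqn:dn} from the defining equations of $\theta_1^3,\theta_2^3$, the degree-two lemma, and the hexagon identities \eqref{eqn:hexagonequivalent}. All of that part of your proposal matches the paper and is fine.

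The gap is in your treatment of the braid relation. Reformulating $\theta_1^3\theta_2^3\theta_1^3=\theta_2^3\theta_1^3\theta_2^3$ as $c^3=\id$ is logically equivalent given the involutions, but the component computation you propose for $c^3$ cannot be completed with the identities you have in hand: writing $c=\theta_2^3\theta_1^3$, one gets $d_1^3c^3=(d_1^3\theta_2^3\theta_1^3)c^2=\theta\, d_2^3\,\theta_2^3\theta_1^3\theta_2^3\theta_1^3=\theta\, d_2^3\,\theta_1^3\theta_2^3\theta_1^3$, and at this point you need $d_2^3\theta_1^3$, which is not a defining equation, not a hexagon identity, and not supplied by the degree-two lemma (it is essentially the case $i=j=1$ of \eqref{eqn:mixed2b}, itself one of the relations being established, and pinning it down requires tracking $\theta_1^3$ through the associator between the two triangulations). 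You flag exactly this back-and-forth as "the main obstacle" but offer no mechanism to resolve it, so as written the step does not close. The paper avoids the problem by verifying the braid relation in its original form: the hexagon identities $d_1^3\theta_2^3\theta_1^3=\theta\, d_2^3$ and $d_3^3\theta_2^3\theta_1^3=d_0^3$ are adapted to the prefix $\theta_2^3\theta_1^3$, and each of the four composites $d_1^3(\theta_1^3\theta_2^3\theta_1^3)$, $d_1^3(\theta_2^3\theta_1^3\theta_2^3)$, $d_3^3(\theta_1^3\theta_2^3\theta_1^3)$, $d_3^3(\theta_2^3\theta_1^3\theta_2^3)$ reduces to that prefix after absorbing one outer factor via a single defining equation ($d_1\theta_1=d_1$, $d_2\theta_2=d_2$, $d_3\theta_1=\theta d_3$, or $d_0\theta_2=\theta d_0$), yielding $\theta d_2^3$ and $\theta d_0^3$ on both sides. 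Replacing your $c^3=\id$ detour with this direct check repairs the argument; the rest of your proposal stands.
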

\begin{proof}
The equations $(\theta_1^3)^2 = (\theta_2^3)^2 = \id$ hold by construction. All of the face map compatibility conditions hold either by definition of $\theta_1^3$, $\theta_2^3$, or by the equations \eqref{eqn:hexagonequivalent} that correspond to the hexagon equation.

To check that $\theta_1^3 \theta_2^3 \theta_1^3 = \theta_2^3 \theta_1^3 \theta_2^3$, we use the fact that, by the $2$-Segal conditions, it is sufficient to show that both sides have the same images under $d_1^3$ and $d_3^3$. This is indeed the case, since by the face map compatibility conditions, we have
\begin{align*}
    d_1^3 \theta_2^3 \theta_1^3 \theta_2^3 &= \theta_1^2 d_2^3 =     d_1^3 \theta_1^3 \theta_2^3 \theta_1^3,\\
    d_3^3 \theta_2^3 \theta_1^3 \theta_2^3 &= \theta_1^2 d_0^3 = d_3^3 \theta_1^3 \theta_2^3 \theta_1^3.
\end{align*}
The degeneracy map compatibility conditions can be proved similarly, using relations that have already been established. For example, since $d_1^3 \theta_1^3 s_1^2 = d_1^3 s_1^2 = \id = d_1^3 s_0^2$ and $d_3^3 \theta_1^3 s_1^2 = \theta_1^2 d_3^3 s_1^2 = \theta_1^2 s_1^1 d_2^2 = s_0^1 d_2^2 = d_3^3 s_0^2$, we conclude that $\theta_1^3 s_1^2 = s_0^2$.
\end{proof}

More generally, we define $\theta_i^n$ for $n \geq 2$ as follows, using the graphical calculus of Section \ref{sec:faceanddegen}. Choose a triangulation $\mathcal{T}$ of the $(n+1)$-gon that includes the triangle with vertices $\{i-1, i, i+1\}$. For $\psi \in X_n$, consider the image $(\xi_1, \dots, \xi_{n-1}) \in X_2 \times_{X_1} \cdots \times_{X_1} X_2$ under the $2$-Segal map $\hat{\mathcal{T}}$ in \eqref{eqn:triangulation}. Apply $\theta = \theta_1^2$ to the component that corresponds to the triangle with vertices $\{i-1, i, i+1\}$. Then the corresponding element of $X_n$ is $\theta_i^n \psi$. This is illustrated in Figure \ref{fig:highertheta}.

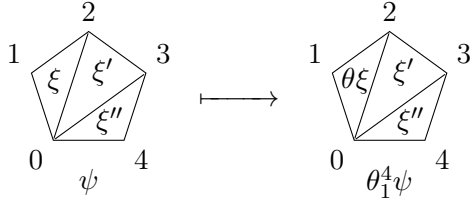
\begin{figure}[th]
\begin{center}
\begin{tikzpicture}[scale=0.8]
    \begin{scope}
        \draw (-126:1) node[vertex] (0) {} node[anchor=north east] {0}
        -- (-54: 1) node[vertex] (4) {} node[anchor=north west] {4}
        -- (18:1) node[vertex] (3) {} node[anchor=south west] {3}
        -- (90:1) node[vertex] (2) {} node[anchor=south] {2}
        -- (162:1) node[vertex] (1) {} node[anchor=south east] {1}
        -- cycle;
        \draw (0) -- (2);
        \draw (0) -- (3);
        \node at (162:.6) {$\xi$};
        \node at (54:.4) {$\xi'$};
        \node at (-54:.6) {$\xi''$};
        \node at (0,-1.5) {$\psi$};
    \end{scope}
    \node at (2.5,0) {$\xmapsto{\phantom{xxxxx}}$};
    \begin{scope}[shift={(5,0)}]
        \draw (-126:1) node[vertex] (0) {} node[anchor=north east] {0}
        -- (-54: 1) node[vertex] (4) {} node[anchor=north west] {4}
        -- (18:1) node[vertex] (3) {} node[anchor=south west] {3}
        -- (90:1) node[vertex] (2) {} node[anchor=south] {2}
        -- (162:1) node[vertex] (1) {} node[anchor=south east] {1}
        -- cycle;
        \draw (0) -- (2);
        \draw (0) -- (3);
        \node at (162:.6) {$\theta \xi$};
        \node at (54:.4) {$\xi'$};
        \node at (-54:.6) {$\xi''$};
        \node at (0,-1.5) {$\theta_1^4 \psi$};
    \end{scope}
\end{tikzpicture}
\end{center}
    \caption{Graphical calculus for $\theta_i^n$. To produce $\theta_1^4 \psi$, we choose a triangulation that includes the triangle with vertices $\{0,1,2\}$, and we apply $\theta$ to the corresponding $2$-simplex.}
    \label{fig:highertheta}
\end{figure}

\begin{lem}
    The maps $\theta_i^n$ satisfy relations \eqref{eqn:moore1b}--\eqref{eqn:dn}.
\end{lem}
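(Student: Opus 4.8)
The plan is to prove every relation by exploiting a \emph{locality principle} supplied by the $2$-Segal conditions. Since any $\psi \in X_n$ is determined by the tuple of $2$-simplices $(\xi_1, \dots, \xi_{n-1})$ it induces under the isomorphism $\hat{\mathcal{T}}$ of any triangulation $\mathcal{T}$, an identity between two composites built from face, degeneracy, and $\theta$ maps can be checked componentwise after choosing a convenient triangulation. The defining feature of $\theta_i^n$ is that, in any triangulation containing the triangle on $\{i-1,i,i+1\}$, it alters only the corresponding component (by applying $\theta = \theta_1^2$) while fixing every other $2$-simplex as well as the diagonal edge from $i-1$ to $i+1$. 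A preliminary step is therefore to record that $\theta_i^n$ is independent of the choice of such triangulation: changing the rest of the triangulation is implemented by $2$-Segal isomorphisms acting on components disjoint from the one being modified, and these commute with the application of $\theta$.

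For the Moore relations, the involution $(\theta_i^n)^2 = \id$ of \eqref{eqn:moore1b} is immediate from $\theta^2 = \id$ on $X_2$ applied to a single component. When $i < j-1$, the triangles on $\{i-1,i,i+1\}$ and $\{j-1,j,j+1\}$ share no edge, so we may fix one triangulation containing both; then $\theta_i^n$ and $\theta_j^n$ modify distinct components and hence commute, giving \eqref{eqn:moore3b}. The braid relation \eqref{eqn:moore2b} is the crux: both sides involve only the sub-quadrilateral on vertices $\{i-1,i,i+1,i+2\}$, so by restricting to a triangulation that triangulates this quadrilateral and leaves the rest fixed, the identity reduces to the already-established case $\theta_1^3 \theta_2^3 \theta_1^3 = \theta_2^3 \theta_1^3 \theta_2^3$ of Lemma \ref{lem:theta3}.

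The mixed relations are handled by the same componentwise bookkeeping in the graphical calculus. For \eqref{eqn:mixed3b}, the map $d_i^n$ deletes vertex $i$ and thereby collapses the two input edges $e_i, e_{i+1}$ of the triangle on $\{i-1,i,i+1\}$ into its diagonal; since $\theta$ merely transposes these two inputs while fixing the diagonal, precomposing $d_i^n$ with $\theta_i^n$ has no effect. The face relations \eqref{eqn:mixed2b} and degeneracy relations \eqref{eqn:mixed1b} follow by tracking whether the triangle modified by $\theta$ and the triangle (resp.\ degenerate simplex) affected by $d_j$ (resp.\ $s_j$) are disjoint, adjacent, or coincident: in the disjoint case the operations act on separate components and commute, and in the adjacent or coincident cases the identity localizes to the relevant sub-polygon and reduces to the degree-$2$ and degree-$3$ computations of the preceding lemmata (for instance, the exceptional cases $\theta_i s_j = s_{i-1}, s_i$ reduce to $\theta_1^2 s_0^1 = s_1^1$ and $\theta_1^2 s_1^1 = s_0^1$). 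Finally, equation \eqref{eqn:dn} is verified by applying $\hat{\mathcal{T}}$ and checking that $d_0^n \theta_1^n \cdots \theta_{n-1}^n$ reproduces the $2$-simplices of $d_n^n \psi$ on each component. The principal obstacle throughout is the careful justification of the locality principle itself — establishing well-definedness and that $\theta_i^n$ acts only on its designated component — since this is precisely what licenses reducing every relation to the low-degree cases already in hand.
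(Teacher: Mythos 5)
Your treatment of the Moore relations \eqref{eqn:moore1b}--\eqref{eqn:moore3b} and the mixed relations \eqref{eqn:mixed1b}--\eqref{eqn:mixed3b} is essentially the paper's argument: disjoint triangles give commutation, and the adjacent/coincident cases localize to a quadrilateral and reduce to the $n=3$ computations of Lemma \ref{lem:theta3} (equivalently, to the hexagon equation \eqref{eqn:hexagonequivalent}). Your explicit insistence on first checking that $\theta_i^n$ is independent of the chosen triangulation is a point the paper leaves implicit, and is a welcome addition.

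The gap is in your one-sentence disposal of \eqref{eqn:dn}. You propose to verify $d_n^n = d_0^n\,\theta_1^n\cdots\theta_{n-1}^n$ by ``applying $\hat{\mathcal{T}}$ and checking componentwise,'' but this is precisely where the locality principle breaks down: consecutive factors $\theta_i^n$ and $\theta_{i+1}^n$ require triangulations containing the triangles $\{i-1,i,i+1\}$ and $\{i,i+1,i+2\}$ respectively, and no single triangulation of the $(n+1)$-gon contains both (they would have to share the exterior edge from $i$ to $i+1$, which belongs to only one triangle of any triangulation). So the composite $\theta_1^n\cdots\theta_{n-1}^n$ cannot be evaluated within one fixed $\hat{\mathcal{T}}$; one must re-triangulate via the $2$-Segal isomorphisms at every step, and the claim becomes a genuinely global one. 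The paper handles this by induction on $n$: the base case $n=3$ is the hexagon equation, and the inductive step shows that $d_0^{n-1}$ and $d_{n-2}^{n-1}$ applied to both sides of \eqref{eqn:dn} agree — a computation using the simplicial relations, \eqref{eqn:mixed2b}--\eqref{eqn:mixed3b}, and the inductive hypothesis — after which the $2$-Segal property forces the two elements of $X_{n-1}$ to coincide. Some argument of this kind (or another genuinely global one) is needed; as written, your verification of \eqref{eqn:dn} does not go through.
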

\begin{proof}
    Relation \eqref{eqn:moore1b} follows from the fact that $\theta^2 = \id$. For \eqref{eqn:moore3b}, we observe that, when $i < j-1$, one can choose a triangulation that simultaneously includes the triangles $\{i-1,i,i+1\}$ and $\{j-1,j,j+1\}$. Then it is clear from the graphical calculus that, in this case, $\theta_i$ and $\theta_j$ commute.

    When $i=j-1$, we have that the maps $\theta_i$ and $\theta_j = \theta_{i+1}$ only affect the quadrilateral with vertices $\{i-1,i,i+i,i+2\}$. Thus \eqref{eqn:moore2b} can be deduced from the case $n=3$, which holds by Lemma \ref{lem:theta3}.

    Relations \eqref{eqn:mixed1b}--\eqref{eqn:mixed3b} can be similarly proven with the graphical calculus. In every case, one either has operations that involve non-overlapping triangles and thus commute (possibly with an index shift) or operations that take place within a quadrilateral and thus follow from the $n=3$ case. For the face map compatibility conditions, we have that $d_j$ deletes the triangle with vertices $\{j-1,j,j+1\}$, and $\theta_i$ applies $\theta$ to the triangle with vertices $\{i-1,i,i+1\}$. If $i < j-1$, then it is possible to choose a triangulation that includes both triangles, and one can see that the operations commute. If $i = j-1$, then the two operations take place within the quadrilateral with vertices $\{i-1,i,i+1,i+2\}$, so it follows from \eqref{eqn:hexagonequivalent} that $\theta_i d_j = d_i \theta_{i+1} \theta_i$. The arguments for the remaining cases, as well as the degeneracy map compatibility conditions, are similar.

    Finally, we prove \eqref{eqn:dn} by induction, as follows. The base case $n=3$ holds by the hexagon equation (see \eqref{eqn:hexagonequivalent}). We then use the simplicial relations, \eqref{eqn:mixed2b}--\eqref{eqn:mixed3b}, and the inductive hypothesis to see that
    \begin{equation*}
        \begin{split}
            d_0^{n-1} d_0^n \theta_1^n \cdots \theta_{n-1}^n &= d_0^{n-1} d_1^n \theta_1^n \cdots \theta_{n-1}^n \\
            &= d_0^{n-1} d_1^n \theta_2^n \cdots \theta_{n-1}^n \\
            &= d_0^{n-1} d_0^n \theta_2^n \cdots \theta_{n-1}^n \\
            &= d_0^{n-1} \theta_1^{n-1} \cdots \theta_{n-2}^{n-1} d_0^n \\    
            &= d_{n-1}^{n-1} d_0^n \\
            &= d_0^{n-1} d_n^n
        \end{split}
    \end{equation*}
    and 
    \begin{equation*}
        \begin{split}
            d_{n-2}^{n-1} d_0^n \theta_1^n \cdots \theta_{n-1}^n &= d_0^{n-1} d_{n-1}^n \theta_1^n \cdots \theta_{n-1}^n \\
            &= d_0^{n-1} \theta_1^{n-1} \cdots \theta_{n-3}^{n-1} d_{n-1}^n \theta_{n-2}^n \theta_{n-1}^n \\
            &= d_0^{n-1} \theta_1^{n-1} \cdots \theta_{n-3}^{n-1} \theta_{n-2}^{n-1} d_{n-2}^n \\
            &= d_{n-1}^{n-1} d_{n-2}^n \\
            &= d_{n-2}^{n-1} d_n^n.
        \end{split}
    \end{equation*}
By the $2$-Segal property, \eqref{eqn:dn} follows.
\end{proof}

\subsection{Examples} \label{subsec:sym_egs}

\begin{example}[Commutative partial monoids]
Recall from Example \ref{ex:partialmonoids} that the nerve of a partial monoid is a $2$-Segal set. A partial monoid $M$ is \emph{commutative} if $x \cdot x' = x' \cdot x$ for all $x,x' \in M$. 

If $M$ is a commutative partial monoid, then each $N_n M$ admits an $S_n$-action given by permutation of the components. It is then straightforward to check that the compatibility conditions \eqref{eqn:mixed1b}--\eqref{eqn:dn} hold. Thus $N_\bullet M$ has the structure of a $2$-Segal $\Gamma$-set.
\end{example}

\begin{example}[Graph partitions]
Recall (see Example \ref{ex:graphpartitions}) the construction from \cite[Example 2.3]{BOORS} of a $2$-Segal set $X(G)_\bullet$ associated to a graph $G$. The elements of $X(G)_n$ are of the form $(H; V_1, \dots, V_n)$, where $H$ is a subgraph of $G$ and $(V_1, \dots, V_n)$ is a partition of the set of vertices of $H$. In particular, in low degrees, $X(G)_0 = \pt$ and $X(G)_1$ consists of subgraphs $H \subseteq G$. Each $X(G)_n$ admits an $S_n$-action given by permutation of the $V_i$, giving $X(G)_\bullet$ the structure of a $\Gamma$-set.

In fact, we feel that in this example it is more straightforward to directly define the $\Gamma$-structure as a functor $\Phi_\ast \to \Set$ than it is to define the simplicial structure together with symmetric group actions satisfying the compatibility conditions. Namely, given a morphism $f:\langle n\rangle \to \langle m\rangle $  in $\Phi_\ast$, we define a map 
	\[
	\begin{tikzcd}
		f_\ast:&[-3em] X(G)_n \arrow[r] & X(G)_m 
	\end{tikzcd}
	\]
	as follows. 
	\[
	f_\ast(H;V_1,\ldots,V_n)=(H^\prime; V_1',\ldots,V_m')
	\]
	where 
	\[
	V_j'=\bigcup_{i\in f^{-1}(j)} V_i 
	\]
	and $H^\prime$ is the full subgraph of $H$ on the vertices $\bigcup_{j=1}^m V_j'$. One can check that this assignment is functorial, and so defines a $\Gamma$-set. 
	
	One can also see that this construction coincides with that of \cite{BOORS} by giving its values on the face and degeneracy maps from \ref{subsec:gen_rel_Phistar}. Explicitly, 
	\begin{itemize}
		\item For $s_i:\langle n\rangle \to \langle n+1\rangle$, we have 
		\[
		(s_i)_\ast(H;V_1,\ldots, V_n)= (H; V_1,\ldots,V_{i-1},\varnothing, V_i,\ldots, V_n)
		\]
		\item For $0<i<n$ and $d_i:\langle n\rangle \to \langle n-1\rangle$, we have 
		\[
		(d_i)_\ast(H;V_1,\ldots, V_n) = (H;V_1,\ldots, V_{i-1}, V_i\cup V_{i+1}, V_{i+2},\ldots, V_n)
		\]
		\item For $d_0: \langle n\rangle \to \langle n-1\rangle$ we have 
		\[
		(d_0)_\ast(H;V_1,\ldots,V_n)= (H^\prime; V_2,\ldots,V_n) 
		\]
		where $H^\prime$ is the full subgraph of $H$ on $V(H)\setminus V_1$.
		\item For $d_n: \langle n\rangle \to \langle n-1\rangle$ we have 
		\[
		(d_n)_\ast(H;V_1,\ldots,V_n)= (H^\prime; V_1,\ldots,V_{n-1}) 
		\]
		where $H^\prime$ is the full subgraph of $H$ on $V(H)\setminus V_n$. 
	\end{itemize}
	As such, the underlying simplicial set of $X(G)$ is precisely the simplicial set of \cite[Example 2.3]{BOORS} and so is 2-Segal.
\end{example}

\begin{example}[Graph complexes]
	There is a variant of the previous example, more in line with the original construction of \cite[Example 1.1.5]{GKT_comb}. Instead of fixing a graph $G$ whose subgraphs we consider, we can instead consider a simplicial object
	\[
	\begin{tikzcd}
		X: &[-3em] \Phi_\ast \arrow[r] & \Set
	\end{tikzcd}
	\]
	for which $X_n$ is the set\footnote{This definition clearly runs into some substantial size issues, which can be addressed by appealing to alternate foundations, e.g., Grothendieck Universes.} of equivalence classes of graphs  equipped with an $n$-component partition of their vertex set. We can define $X$ on morphisms in a manner identical to the preceding example, and so obtain a 2-Segal $\Gamma$-set. 
	
	The original example in \cite{GKT_comb} might lead us to suppose that this 2-Segal $\Gamma$-set is a categorification of Schmitt's coalgebra\footnote{Notice that the obvious duality on $\Span$ obtained by reading spans backwards canonically identifies (pseudo) algebras and (pseudo) coalgebras in $\Span$. Thus both correspond to 2-Segal simplicial objects.} of graphs from \cite{Schmitt}, and that the fact that we can promote this 2-Segal object to a $\Gamma$-set is a result of the cocommutativity of this coalgebra, as remarked in \cite{Schmitt}. 
	
	However, this is too coarse, since there are abstractly isomorphic partitioned graphs which represent different terms in Schmitt's comultiplication (just as remarked about the Butcher–Connes–Kreimer bialgebra in \cite[2.5.4]{GKT_comb}). However, we can rectify this by defining a 2-Segal \emph{$\Gamma$-groupoid}
	\[
	\begin{tikzcd}
		\scr{X}: &[-3em] \Phi_\ast \arrow[r] & \on{Grpd} 
	\end{tikzcd}
	\] 
	In which $\scr{X}_n$ is the \emph{groupoid} of graphs equipped with a $n$-component partition of their vertex set. While this $\Gamma$-groupoid goes beyond the scope of this paper, it adds another layer of evidence that our results generalize naturally to the higher-categorical setting. 
\end{example}

	\printbibliography
	
\end{document}